\selectfont\symbol{60}\fontencoding{\encodingdefault}}
\newcommand{\TeXmacs}{T\kern-.1667em\lower.5ex\hbox{E}\kern-.125emX\kern-.1em\lower.5ex\hbox{\textsc{m\kern-.05ema\kern-.125emc\kern-.05ems}}}
\newcommand{\assign}{\coloneqq}
\newcommand{\chapter}[1]{\medskip\bigskip

\noindent\textbf{\huge #1}}
\newcommand{\mathd}{\,\mathrm{d}}
\newcommand{\nin}{\not\in}
\newcommand{\nobracket}{}
\newcommand{\nocomma}{}
\newcommand{\nospace}{}
\newcommand{\tmem}[1]{{\em #1\/}}
\newcommand{\tmname}[1]{\textsc{#1}}
\newcommand{\tmop}[1]{\ensuremath{\operatorname{#1}}}
\newcommand{\tmtextit}[1]{{\itshape{#1}}}
\newenvironment{enumeratealpha}{\begin{enumerate}[a{\textup{)}}] }{\end{enumerate}}
\newenvironment{proof}{\noindent\textbf{Proof\ }}{\hspace*{\fill}$\Box$\medskip}
\newenvironment{tmparmod}[3]{\begin{list}{}{\setlength{\topsep}{0pt}\setlength{\leftmargin}{#1}\setlength{\rightmargin}{#2}\setlength{\parindent}{#3}\setlength{\listparindent}{\parindent}\setlength{\itemindent}{\parindent}\setlength{\parsep}{\parskip}} \item[]}{\end{list}}
\newenvironment{proof*}[1]{\noindent\textbf{#1\ }}{\hspace*{\fill}$\Box$\medskip}
\newtheorem{definition}{Definition}
\newtheorem{proposition}{Proposition}
\newtheorem{lemma}{Lemma}
\newtheorem{theorem}{Theorem}
\newcounter{nnnotation}
\newtheorem{notation*}[nnnotation]{Notation}
\newcounter{nnnote}
\newtheorem{note*}[nnnote]{Note}}
\newtheorem{remark}{Remark}}
\newtheorem{problem}{Problem}}
\newcounter{stepcount}
\newenvironment{step}[2]{\refstepcounter{stepcount}\vspace{1ex}\noindent\underline{Step \arabic{stepcount}: {#1}}{\\ \vspace{1ex}#2}}
\newcommand{\grs}{\ensuremath{\nabla_{s}}}
\newcommand{\gra}{\ensuremath{\nabla_{a}}}
\newcommand{\applicationspace}[1]{\quad}
\begin{document}

\begin{center}
\begin{LARGE}
Energy minimising configurations of pre-strained multilayers
\end{LARGE}
\\[0.5cm]
\begin{large}
Miguel de Benito Delgado\footnote{Universit{\"a}t Augsburg, Germany, {\tt m.debenito.d@gmail.com}} and 
Bernd Schmidt\footnote{Universit{\"a}t Augsburg, Germany, {\tt bernd.schmidt@math.uni-augsburg.de}}
\end{large}
\\[0.5cm]
\today
\\[1cm]
\end{center}

 
\begin{abstract}
We investigate energetically optimal configurations of thin structures with a pre-strain. Depending on the strength of the pre-strain we consider a whole hierarchy of effective plate theories with a spontaneous curvature term, ranging from linearised Kirchhoff to von K{\'a}rm{\'a}n to linearised von K{\'a}rm{\'a}n theories. While explicit formulae are available in the linearised regimes, the von K{\'a}rm{\'a}n theory turns out to be critical and a phase transition from cylindrical (as in linearised Kirchhoff) to spherical (as in von linearised K{\'a}rm{\'a}n) configurations is observed there. We analyse this behavior with the help of a whole family $(\mathcal{I}^{\theta}_{\rm vK})_{\theta \in (0,\infty)}$ of effective von K{\'a}rm{\'a}n functionals which interpolates between the two linearised regimes. We rigorously show convergence to the respective explicit minimisers in the asymptotic regimes $\theta \to 0$ and $\theta \to \infty$. Numerical experiments are performed for general $\theta \in (0,\infty)$ which indicate a stark transition at a critical value of $\theta$. 
\end{abstract}

\tableofcontents

\section{Introduction}\label{sec:intro}

The topic of this paper is motivated by experimental observations on optimal energy configurations in thin (heterogeneous) structures with a pre-strain. The simplest example of such a structure is the classical bimetallic strip which consists of two strips of different materials with different thermal expansion coefficients joined together throughout their length. If heated or cooled, due to the misfit of equilibria, internal stresses develop. The flat reference configuration is no longer optimal and the strip bends in order to reduce elastic energy. This behavior can effectively be modelled with a 1d energy functional comprising a temperature dependent spontaneous curvature term. 

In this paper we will investigate thin layers whose two lateral dimensions are much larger than their very small height and whose flat reference configuration is subject to internal stresses (one speaks of pre-strained or pre-stressed bodies). Examples of such structures are heated materials (with inhomogeneous expansion coefficients as in the bimetallic strip referred to above or homogeneous materials with a temperature gradient), crystallisations on top of a substrate as in epitaxially grown layers, or biological materials whose internal misfit is caused by swelling and growing tissue. Our main focus will be on multilayered heterogeneous plates, for which the effective plate theories have been provided in {\cite{DeBenitoSchmidt:19a}}. Our findings, however, apply equally to different situations as long as they are described by the same effective functionals, cf.\ Remark \ref{rmk:Wh-ass-and-ex} below. 
 
As a matter of fact, the situation is much more complicated and interesting for two dimensional plates than for one dimensional strips. It has been found that the assumed shape depends on the strength of the pre-strain and the aspect ratio of the specimen: Large pre-strains in very thin layers tend to cause cylindrical shapes whereas smaller pre-strains in thicker layers lead to spherical caps, \cite{SalamonMasters93,SalamonMasters95,FinotSuresh96,Freund00,KimLombardo08,EgunovKorvinkLuchnikov16}. To explain this observation one argues that locally the energy is best released if a spherical shape is assumed. If, however, the aspect ratio is very small, i.e., the lateral dimensions are very large compared to the thickness, then this leads to geometric incompatibilities: non-zero Gau{\ss} curvature introduces a change of the metric which by far has too high elastic energy. In contrast, cylindrical shapes do not lead to such incompatibilities. 

A thorough theoretical understanding of this mechanism through which `misfit' of equilibria is converted into mechanical displacement is not only interesting from a mathematical point of view. In view of applications it has proved to constitute a convenient and feasible method to access and manipulate objects even at the nanoscale. By way of example we mention experiments on the self-organised fabrication of nano-scrolls, as reported in {\cite{SchmidtEberl:01,Grundmann:03,Paetzelt-etal:06}}. 

The aim of this paper is to shed light on the geometry of energetically optimal configurations of pre-strained heterostructures with the help of two-dimensional plate theories. More precisely, we consider effective plate theories for multilayers with reference configuration $\Omega_h = \omega \times (-h/2, h/2)$, $0 < h \ll 1$, whose (small) misfit pre-strain is described by a matrix $h^{\alpha - 1} B^h$, scaling with $h$. 

The particular case $\alpha = 2$ with a misfit of the order $h$ of the aspect ratio has been investigated in {\cite{schmidt_minimal_2007,schmidt_plate_2007, BartelsBonitoNochetto:17}}. The appropriate plate theory is the nonlinear Kirchhoff theory (in the finite bending regime) and energy minimizers turned out to be (portions of) cylinders whose possible winding directions and radii are determined explicitly. Therefore, in order to be able to encounter different behavior one has to consider weaker scalings of the misfit. 

In {\cite{DeBenitoSchmidt:19a}} -- based on the homogeneous case explored in {\cite{friesecke_hierarchy_2006}} -- we have found a whole hierarchy of effective plate theories for the scalings $\alpha > 2$. Suitably rescaled, one obtains only three different limiting plate theories: the linearised Kirchhoff theory for $\alpha \in (2, 3)$, the von K{\'a}rm{\'a}n theory for $\alpha = 3$ and the linearised von K{\'a}rm{\'a}n theory for $\alpha > 3$. With a view to our present investigation, we have moreover derived a fine scale $\theta$ in the critical von K{\'a}rm{\'a}n scale which interpolates continuously between the the two linearised theories. 

For such small misfits one is lead to describe a deformation $y^h : \Omega_h \to \mathbb{R}^3$ in terms of the scaled and averaged in-plane, respectively, out-of-plane displacements
\begin{align}\label{def:scaled-in-out-of-plane-displacements} 
\begin{split}
  u_{i}^h (x_1, x_2) 
  & \assign \frac{1}{(\sqrt{\theta}h)^{\gamma}}  \int_{- 1 / 2}^{1 / 2} \big( y^h_i (x_1, x_2, x_3) - x_i \big) \mathd x_3,\quad i = 1,2, \\
  v^h (x_1, x_2) 
  & \assign \frac{1}{(\sqrt{\theta}h)^{\alpha - 2}}  \int_{- 1 / 2}^{1 / 2} y^h_3 (x_1, x_2, x_3) \mathd x_3,
\end{split}
\end{align}
where $\theta \equiv 1$ unless $\alpha = 3$ and 
\[ \gamma = \left\{\begin{array}{rll}
     2 (\alpha - 2) & \text{if} & \alpha \in (2, 3],\\
       \alpha - 1 & \text{if} & \alpha \ge 3.
   \end{array}\right. \]
A limiting plate theory in terms of the limiting quantities $(u, v)$ is then derived as the $\Gamma$-limit of the 3d nonlinearly elastic energy, rescaled by $h^{1 - 2 \alpha}$, cf.\ {\cite{DeBenitoSchmidt:19a}}. For a minimizer $(u, v)$ of the limiting theory one obtains the shape of an optimal configuration at finite $0 < h \ll 1$: After descaling, its $x_3$-averaged displacement is given approximately by 
\[ (x_1, x_2) 
   \mapsto \big( (\sqrt{\theta}h)^{\gamma} u(x_1, x_2), (\sqrt{\theta}h)^{\alpha - 2} v(x_1, x_2) \big). \] 
Since $\gamma > \alpha - 2$, the in-plane components are indeed much smaller than the out-of-plane component. In his sense, the shape is to leading order described by $v : \omega \to \mathbb{R}$ only.  

In the linearised regimes our results give the following picture: If $\alpha < 3$, degenerate parabolas (infinitesimal parts of cylinders) are seen to be optimal, whereas for $\alpha > 3$, non-degenerate parabolas (infinitesimal parts of an elliptical cap) are energy minimizers. Only in the latter case, however, the minimizer is unique (up to affine terms). Yet, even in case $\alpha < 3$ it turns out the geometric shape is uniquely determined as an infinitesimal part of a cylinder while the winding direction and radius may have several optimal values. In both cases we explicitly determine these minimizers. A basic observation shows that for $\alpha = 3$ these configurations are still asymptotically optimal in the `almost linearised' regimes $\theta \gg 1$ and $\theta \ll 1$, respectively.

The von K{\'a}rm{\'a}n regime is much more subtle.  We focus on a prototypical functional in order to understand better the material response if the misfit (and hence $\theta$) is increased from $0$ to a finite value. We show that for finite, although small, values of $\theta$ there is a unique branch of global minimizers emanating from a spherical cap. For a further study for general values of $\theta \in (0, \infty)$ we then rely on computer experiments. To this end, we develop a penalised, nonconforming finite element discretisation using $P^1$ elements and employ projected gradient descent to solve the ensuing nonlinear problems while ensuring constraints are met. We first show $\Gamma$-convergence of the discrete problems to the continuous one, then investigate the minimizers in their dependence on $\theta$. Interestingly, our results seem to indicate a stark change of material response at a critical value of $\theta$, showing a symmetry breaking `phase transition' from a nearly spherical cap to an approximate cylinder.

\subsection*{Outline}

We begin by recalling our main results from {\cite{DeBenitoSchmidt:19a}} in order to provide the appropriate plate theories in Section~\ref{sec:effective-theories}. There we also identify the effective elastic moduli and spontaneous curvature terms explicitly so as to transform the problem into a more amenable form to identify minimizers. We then discuss the linearised regimes $\alpha \in (2,3)$ and $\alpha > 3$ as well as the asymptotic von K{\'a}rm{\'a}n regimes $\theta \to 0$ and $\theta \to \infty$ in Section~\ref{sec:Lin-regimes}. The structure of minimisers for small $\theta$ is investigated in Section~\ref{sec:structure-minimisers-interpolating}. Finally, Section~\ref{sec:numerics} contains our numerical findings.

\section{Effective plate theories}\label{sec:effective-theories}

We first recall the main results of our contribution \cite{DeBenitoSchmidt:19a} on a hierarchy of plate theories for pre-strained multilayers derived from non-linear three dimensional elasticity by $\Gamma$-convergence. We then determine the effective (homogenised) elastic moduli and corresponding quadratic energy desnities of the plates in terms of the moments of the pointwise elastic constants of the layers.

\subsection{Dimension reduction for pre-strained multilayers}\label{subsec:dim-red}

Working exactly in the setting of \cite{DeBenitoSchmidt:19a} we consider a thin domain
\[ \Omega_h \assign \omega \times (- h / 2, h / 2) \subset \mathbb{R}^3, \]
where $\omega \subset \mathbb{R}^2$ is bounded with Lipschitz boundary, $0 < h \ll 1$, subject to a deformation $w : \Omega_h \to \mathbb{R}^3$. Changing variables form $x_3$ to $x_3 / h$ we obtain a deformation mapping $y(x) = w(x_1,x_2,hx_3)$ and the energy per unit volume 
\[ E^h_{\alpha} (y) 
   = \int_{\Omega_1} W_{\alpha}^h (x_3, \partial_1 y, \partial_2 y, h^{-1} \partial_3 y), \] 
where the elastic energy density $W_{\alpha}^h$ depends on a scaling parameter $\alpha \in (2, \infty)$ and is given by
\begin{equation*}
  W_{\alpha}^h (x_3, F) = W_0 (x_3, F (I +
  h^{\alpha - 1} B^h (x_3))), \applicationspace{1 \tmop{em}} F \in
  \mathbb{R}^{3 \times 3}.
\end{equation*}
for $\alpha \neq 3$, $B^h : \left( - 1 / 2, 1 / 2 \right) \rightarrow
\mathbb{R}^{3 \times 3}$ describing the {\em internal misfit} and $W_0$ the stored energy density of the reference configuration. For $\alpha = 3$ we include
an additional parameter $\theta > 0$ controlling further the amount of misfit
in the model:\label{ref:introducing-theta}
\[ W_{\alpha = 3}^h (x_3, F) = W_0 \big( x_3, F \big( I + h^2  \sqrt{\theta}
   B^h (x_3) \big) \big), \applicationspace{1 \tmop{em}} F \in
   \mathbb{R}^{3 \times 3}. \]
We take $W_0$ fulfilling the usual assumptions of smoothness around
$SO(3)$, frame invariance, boundedness and quadratic growth which are detailed
in \cite{DeBenitoSchmidt:19a}. After linearising around the identity, one obtains the Hessian
\[ Q_3 (t, F) \assign D^2 W_0 (t, I) [F, F] = \frac{\partial^2 W_0 (t,
   I)}{\partial F_{i \nocomma j} \partial F_{i \nocomma j}} F_{i \nocomma j} F
   \nocomma_{i \nocomma j}, \]
for $t \in \left( - 1 / 2, 1 / 2 \right), F \in \mathbb{R}^{3 \times 3}$ and defines $Q_2$ by
minimising away the effect of transversal strain on $Q_3$:
\[ Q_2 (t, G) \assign \underset{c \in \mathbb{R}^3}{\min} Q_3 (t, \hat{G} + c
   \otimes e_3), \]
for $t \in \left( - 1 / 2, 1 / 2 \right), G \in \mathbb{R}^{2 \times 2}$, 
$e_3 = (0, 0, 1) \in \mathbb{R}^3$, and $\hat{G} \in \mathbb{R}^{3 \times 3}$ has $G$ as its upper left $2 \times 2$ submatrix and zeros in the third column and the third row. The functions $Q_2(t, \cdot)$, $t \in (-1/2,1/2)$, are quadratic forms on 
$\mathbb{R}^{2 \times 2}$ which are positive definite on $\mathbb{R}_{\tmop{sym}}^{2 \times 2}$ 
and vanish on antisymmetric matrices. Moreover, they satisfy the bounds 
\begin{equation}\label{eq:Q2-bounds}
  Q_2(t, G) \le C | G |^2 \quad \forall\, G \in \mathbb{R}^{2 \times 2} 
  \quad \text{ and } \quad 
  Q_2(t, G) \ge c | G |^2 \quad \forall\, G \in \mathbb{R}_{\tmop{sym}}^{2 \times 2} 
\end{equation} 
for constants $c, C > 0$ and a.e.\ $t \in (-1/2,1/2)$. We also denote by $\check{B}(t)$ the $2 \times 2$ matrix which arises from $B(t) \in \mathbb{R}^{3 \times 3}$ by deleting its last row and last column. Then 
\begin{equation}\label{eq:B-bounds}
  \check{B} \in L^{\infty} \big( (-1/2,1/2), \mathbb{R}_{\tmop{sym}}^{2 \times 2} \big). 
\end{equation} 
From $Q_2(t, \cdot)$ and $\check{B}(t)$ we define the effective form:
\[ \label{def:Q-bar} \overline{Q}_2 [E, F] \assign \int_{- 1 / 2}^{1 / 2} Q_2
   (t, E + tF + \check{B} (t)) \mathd t, \]
and its relaxation
\begin{equation}
  \label{eq:q2bar} \overline{Q}^{\star}_2 (F) \assign \underset{E \in
  \mathbb{R}_{\tmop{sym}}^{2 \times 2}}{\min}  \int_{- 1 / 2}^{1 / 2} Q_2 (t,
  E + tF + \check{B} (t)) \mathd t.
\end{equation}

In {\cite{DeBenitoSchmidt:19a}} it is shown that $h^{2 - 2\alpha} E^h_{\alpha}$ $\Gamma$-converges for the convergence of the averaged in-plane and out-of-plane displacements $(u^h, v^h) \rightharpoonup (u,v)$ in $W^{1,2}(\omega; \mathbb{R}^3)$ modulo a global rigid motion, cf.\ \eqref{def:scaled-in-out-of-plane-displacements}, to the following effective 
limiting functionals: 

For the scaling $\alpha \in (2, 3)$ as defined in \cite{DeBenitoSchmidt:19a} 
and convex $\omega$, the {\em linearised Kirchhoff} energy is given by
\begin{equation}
  \label{eq:energy-lki} \mathcal{I}_{\rm lKi} (v) \assign
  \left\{\begin{array}{rl}
    \frac{1}{2}  \int_{\omega} \overline{Q}_2^{\star} (- \nabla^2 v) & \text{
    if } v \in W^{2, 2}_{\rm sh} (\omega),\\
    \infty & \text{ otherwise} .
  \end{array}\right.
\end{equation}
For $\alpha = 3$ we have the {\em von K{\'a}rm{\'a}n type
energy}\footnote{As in \cite{DeBenitoSchmidt:19a} we slightly overload
the notation in what would be a double definition of $\mathcal{I}_3^h$, 
using the letter in the subindex to dispel the ambiguity.}
\begin{equation}
  \label{eq:energy-vk} \mathcal{I}^{\theta}_{\rm vK} (u, v) \assign
  \left\{\begin{array}{l}
    \frac{1}{2}  \int_{\omega} \overline{Q}_2 [\theta^{1 / 2} (\grs u +
    \tfrac{1}{2} \nabla v \otimes \nabla v), - \nabla^2 v]\\
    \text{{\hspace{5em}}if } (u, v) \in W^{1, 2} (\omega ; \mathbb{R}^2)
    \times W^{2, 2} (\omega ; \mathbb{R}),\\
    \infty, \text{ otherwise} .
  \end{array}\right.
\end{equation}
Finally, in the regime $\alpha > 3$ we have the {\em linearised von
K{\'a}rm{\'a}n energy}
\begin{equation}
  \label{eq:energy-lvk} \mathcal{I}_{\rm lvK} (u, v) \assign
  \left\{\begin{array}{l}
    \frac{1}{2}  \int_{\omega} \overline{Q}_2 [ \grs u, - \nabla^2 v],\\
    \text{{\hspace{5em}}if } (u, v) \in W^{1, 2} (\omega ; \mathbb{R}^2)
    \times W^{2, 2} (\omega ; \mathbb{R})\\
    \infty, \text{ otherwise} .
  \end{array}\right.
\end{equation}

\begin{remark}\label{rmk:Wh-ass-and-ex}
The precise assumptions on $W^h_{\alpha}$ from \cite{DeBenitoSchmidt:19a} are not essential for the results of the present contribution. In what follows we will only need that the $Q_2(t, \cdot)$, $t \in (-1/2,1/2)$, are quadratic forms on $\mathbb{R}^{2 \times 2}$ that vanish on antisymmetric matrices and satisfy \eqref{eq:Q2-bounds} and that $\check{B}$ satisfies \eqref{eq:B-bounds}. 

The existence of minimizers of \eqref{eq:energy-lki} \eqref{eq:energy-vk} and \eqref{eq:energy-lvk} follows by a standard application of the direct method or, in the setting of \cite{DeBenitoSchmidt:19a}, as a direct consequence of $\Gamma$-convergence and compactness. 
\end{remark}

\noindent{\it Example.} 
For a homogeneous material $Q_2(t,A) = Q_2 (A)$ with linear internal misfit $B (t) = tI$ one has 
\begin{align}
  \mathcal{I}_{\rm lKi} (v) 
  &= \frac{1}{24}  \int_{\omega} Q_2 (\nabla^2 v - I), \nonumber \\ 
  \mathcal{I}_{\rm vK}^{\theta} (u, v) 
  &= \frac{\theta}{2}  \int_{\omega} Q_2 (\grs u + \tfrac{1}{2}
  \nabla v \otimes \nabla v) + \frac{1}{24}  \int_{\omega} Q_2 (\nabla^2 v -
  I), \label{eq:energy-btI} \\ 
  \mathcal{I}_{\rm lvK} (u, v) 
  &= \frac{1}{2} \int_{\omega} Q_2 (\grs u ) + \frac{1}{24}  \int_{\omega} Q_2 (\nabla^2 v - I) \nonumber 
\end{align}
for $v \in  W^{2, 2}_{\rm sh} (\omega)$, respectively, $(u, v) \in W^{1, 2} (\omega ; \mathbb{R}^2) \times W^{2, 2} (\omega ; \mathbb{R})$. These functionals, where the elastic coefficients do not depend on the out-of-plane component, can model for instance a single-layer material under thermal stress. In Section \ref{sec:numerics}, we will study the energy {\eqref{eq:energy-btI}} as a function of $\theta$.

\subsection{Effective moduli and minimising strains}\label{subsec:effective-moduli}

This subsection serves to give explicit formulae relating the homogenised effective elastic moduli found above to the zeroth, first and second moment in $t$ of the individual $Q_2(t, \cdot)$. We also identify their pointwise minimiser so as to rewrite the effective quadratic forms in their most convenient form. The computations are completely elementary, we indicate the main steps. 

Because $Q_2$ vanishes on antisymmetric matrices we may restrict our attention to $F \in \mathbb{R}^{2 \times 2}_{\tmop{sym}}$. From now on, we identify matrices $E = (E_{i \nocomma j})_{i, j = 1}^2 \in \mathbb{R}^{2 \times 2}_{\tmop{sym}}$ with vectors in $\mathbb{R}^3$ via
\begin{equation}\label{eq:identification-matrix-vector} 
  E \mapsto e \assign (E_{11}, E_{22}, E_{12}),
\end{equation}
and analogously $F \mapsto f$, $\check{B} \mapsto b$, $A \mapsto a$. Then, for each $t \in \left( - 1 / 2, 1 / 2 \right)$ there exists some symmetric, positive definite matrix $M (t)$ such that for all $A \in \mathbb{R}^{2 \times 2}_{\tmop{sym}}$: 
\[ Q_2 (t, A) = a^{\top} M (t) a. \]

We define the moments of $M$ as 
\[ M_0 \assign \int_{- 1 / 2}^{1 / 2} M (t) \mathd t, \applicationspace{1
   \tmop{em}} M_1 \assign \int_{- 1 / 2}^{1 / 2} tM (t) \mathd t,
   \applicationspace{1 \tmop{em}} M_2 \assign \int_{- 1 / 2}^{1 / 2} t^2 M
   (t) \mathd t. \]
It is easy to see that \eqref{eq:Q2-bounds} implies that $M_0$ and $M_2$ are positive definite. We claim that also 
\[ M^{\ast} \assign M_2 - M_1 M_0^{- 1} M_1 \] 
is positve definite. To see this, fix $\Lambda \in \mathbb{R}^{2 \times 2}$ and note that for all $x \in  \mathbb{R}^{2} \setminus \{0\}$ 
\begin{align*}
  \int_{- 1 / 2}^{1 / 2} \big| \big( tM^{1 / 2} (t) - M^{1 / 2} (t) \Lambda \big) x \big|^2 \mathd t 
  > 0
\end{align*}
since $\big( tM^{1 / 2} (t) - M^{1 / 2} (t) \Lambda \big) x = 0$ for a.e.\ $t$ would imply that 
$( t I - \Lambda ) x = 0$ in contradiction to $\Lambda$ having at most two eigenvalues. Expanding the square we get  
\begin{align*}
  0 & < 
  \int_{- 1 / 2}^{1 / 2} x^{\top} (tI - \Lambda)^{\top} M (t)  (tI - \Lambda) \, x \mathd t\\
  & = x^{\top} \big( M_2 - \Lambda^{\top} M_1 - M_1 \Lambda + \Lambda^{\top} M_0 \Lambda \big) \, x
\end{align*}
and, choosing $\Lambda = M_0^{- 1} M_1$, 
\[ 0 < x^{\top} \big( M_2 - M_1 M_0^{- 1} M_1 \big) x. \]

Let $\overline{Q}_2$ be given as in {\eqref{def:Q-bar}}. 
Elementary calculations show that 
\begin{align*}
   &\overline{Q}_2[E, F] 
   ~=~\int_{- 1 / 2}^{1 / 2} Q_2 (t, E + tF + \check{B} (t)) \mathd t \\ 
   &~~=~ e^{\top} M_0 e + f^{\top} M_2 f + \beta_0 + 2 e^{\top} M_1 f + 2 e^{\top} b_1 + 2 f^{\top} b_2 \\ 
   &~~=~ \big( e + M_0^{-1} (M_1 f + b_1) \big)^{\top} M_0 \big( M_0 e + M_0^{-1} (M_1 f + b_1) \big) \\ 
   &~~\qquad + \big( f + (M^{\ast})^{-1} (b_2 - M_1 M_0^{-1} b_1) \big)^{\top} M^{\ast} 
               \big( f + (M^{\ast})^{-1} (b_2 - M_1 M_0^{-1} b_1) \big) \\ 
   &~~\qquad - \big( M_1 M_0^{-1} b_1 \big)^{\top} (M^{\ast})^{-1} \big( M_1 M_0^{-1} b_1 \big) 
             - b_1^{\top} M_0^{-1} b_1 + \beta_0 \\  
   &~~=~ \gamma + \big( e + M_0^{-1} (M_1 f + b_1) \big)^{\top} M_0 \big( M_0 e + M_0^{-1} (M_1 f + b_1) \big) \\ 
   &~~\qquad + \big( f + (M^{\ast})^{-1} (b_2 - M_1 M_0^{-1} b_1) \big)^{\top} M^{\ast} 
               \big( f + (M^{\ast})^{-1} (b_2 - M_1 M_0^{-1} b_1) \big), 
\end{align*}
where 
\begin{align}\label{eq:gamma-def} 
  \gamma 
  := - \big( M_1 M_0^{-1} b_1 \big)^{\top} (M^{\ast})^{-1} \big( M_1 M_0^{-1} b_1 \big) 
     - b_1^{\top} M_0^{-1} b_1 + \beta_0. 
\end{align} 

We define the linear mappings $\mathcal{L}_i, \mathcal{L}_{\ast} : \mathbb{R}^{2 \times 2}_{\tmop{sym}} \to \mathbb{R}^{2 \times 2}_{\tmop{sym}}$, $i = 1,2,3$, by 
\[ \mathcal{L}_i A = A' \iff M_i \, a = a', 
   \qquad \text{respectively,} \qquad 
   \mathcal{L}_{\ast} A = A' \iff M^{\ast} \, a = a' \] 
and the positive definite quadratic forms $Q_2^0$ and $Q_2^{\ast}$ on $\mathbb{R}^{2 \times 2}_{\tmop{sym}}$ by 
\begin{align}\label{eq:Q20-Q2ast-def}
  Q_2^0(A) = a^{\top} M_0 \, a, 
  \qquad \text{respectively,} \qquad 
  Q_2^{\ast}(A) = a^{\top} M^{\ast} a. 
\end{align} 
In terms of these quantities our computation reads 
\begin{align}\label{eq:Q2bar}
   \overline{Q}_2[E, F] 
   = \gamma + Q_2^0 ( E - \mathcal{L}_0^{-1}\mathcal{L}_1 F - E_0 ) 
         + Q_2^{\ast} ( F - F_0 ) 
\end{align}
with 
\begin{align}\label{eq:E0-F0-def} 
  F_0 
  = \mathcal{L}_{\ast}^{-1} ( \mathcal{L}_1 \mathcal{L}_0^{-1} B_1 - B_2 ), \ \ 
  E_0 
  = \mathcal{L}_0^{-1} B_1. 
\end{align} 
Minimizing out $E$ yields 
\begin{align}\label{eq:Q2barstar}
  \overline{Q}^{\star}_2 (F) 
  = \underset{E \in \mathbb{R}_{\tmop{sym}}^{2 \times 2}}{\min} \overline{Q}_2[E, F] 
  = \gamma + Q_2^{\ast} ( F - F_0 ). 
\end{align}

\section{Optimal configurations in the linearised and the asymptotic critical regimes}\label{sec:Lin-regimes}

In this section we develop a characterisation of minimisers for the lower range $\alpha \in (2, 3)$ and for the upper range $\alpha > 3$ of scalings. Recall from the discussion in Section \ref{sec:intro} that we are primarily intested in the shape of the out-of-plane component $v$. The results indicate that the characteristic shapes in the limit $h \to 0$ are (infinitesimal) cylinders and paraboloids respectively. Invoking the $\Gamma$-convergence results with respect to the interpolation parameter $\theta$ from \cite[Section 6]{DeBenitoSchmidt:19a} this will also shed light on the optimal shapes in the asymptotic regimes $\theta \to 0$ and $\theta \to \infty$ for the von K{\'a}rm{\'a}n scaling $\alpha = 3$. We collect our results in the following three theorems, where indeed Theorem \ref{thm:lvk-minimizers} is indeed rather an elementary observation based on our preparations form the previous section and Theorem \ref{thm:asymptotic-minimizers} is a direct consequence of \cite[Section 6]{DeBenitoSchmidt:19a}. We allow for a general bounded Lipschitz domain $\omega$ in these theorems. 

\begin{theorem}\label{thm:lvk-minimizers} 
The minimisers of $\mathcal{I}_{\rm lvK}$, eq. \eqref{eq:energy-lvk}, are of the form 
\begin{align}\label{eq:opt-config-lvk}
  u (x) 
  = (\mathcal{L}_0^{-1}\mathcal{L}_1 F_0 + E_0) x 
  \text{\quad and \quad} 
  v (x) = \frac{1}{2} x^{\top} F_0 x, 
\end{align}
with $E_0, F_0 \in \mathbb{R}_{\tmop{sym}}^{2 \times 2}$ the constants from \eqref{eq:E0-F0-def}. $u$ is unique up to an infinitesimal rigid motion and $v$ up to the addition of an affine transformation. 
\end{theorem}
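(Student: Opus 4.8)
The plan is to obtain the statement directly from the completion-of-squares identity \eqref{eq:Q2bar}--\eqref{eq:Q2barstar} derived in Section~\ref{subsec:effective-moduli}, the only analytic ingredients being two standard rigidity facts. First I would substitute $E = \grs u(x)$ and $F = -\nabla^2 v(x)$ into \eqref{eq:Q2bar} and integrate over $\omega$. Since $Q_2^{\ast}$ is positive definite on $\mathbb{R}^{2\times 2}_{\tmop{sym}}$ (shown in the text) and $Q_2^0(A) = a^{\top}M_0 a$ is too, because $M_0$ is positive definite by the lower bound in \eqref{eq:Q2-bounds}, the two quadratic terms in \eqref{eq:Q2bar} are nonnegative; hence
\[
  \mathcal{I}_{\rm lvK}(u,v) \;\ge\; \tfrac12\,\gamma\,|\omega|
  \qquad\text{for every admissible pair }(u,v),
\]
and equality holds exactly when both integrands vanish almost everywhere. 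Equivalently, one may first minimise out the in-plane field pointwise by \eqref{eq:Q2barstar}, which reduces the lower bound to $\frac12\int_{\omega}\overline{Q}_2^{\star}(-\nabla^2 v) = \frac12\gamma|\omega| + \frac12\int_{\omega}Q_2^{\ast}(-\nabla^2 v - F_0)$.

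The vanishing of the bending term forces $\nabla^2 v$ to agree almost everywhere with a fixed constant symmetric matrix, namely the Hessian $F_0$ of the quadratic in \eqref{eq:opt-config-lvk}. Here I would invoke the rigidity fact that a function $v \in W^{2,2}(\omega)$ with a.e.\ constant Hessian $H$ on the connected open set $\omega$ equals $x\mapsto\tfrac12 x^{\top}Hx$ up to an affine map: indeed $\nabla v$ then has constant distributional gradient $H$, so $\nabla v - Hx$ has vanishing gradient and is constant on $\omega$, and one further integration gives the claim. This pins $v$ down to the quadratic polynomial in \eqref{eq:opt-config-lvk} modulo an affine function.

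With $v$ thus fixed, $\nabla^2 v$ is the prescribed constant, and a short computation turns the membrane term into $\tfrac12\int_{\omega}Q_2^0(\grs u - C)$, with $C := \mathcal{L}_0^{-1}\mathcal{L}_1 F_0 + E_0 \in \mathbb{R}^{2\times 2}_{\tmop{sym}}$ the constant matrix appearing in \eqref{eq:opt-config-lvk}; by positive definiteness of $Q_2^0$ this vanishes iff $\grs u \equiv C$ a.e. Such $u$ exist — for the linear field $u(x) = Cx$ one has $\grs u = C$ since $C$ is symmetric — so the configuration \eqref{eq:opt-config-lvk} attains the lower bound $\tfrac12\gamma|\omega|$, and any other admissible $u$ with $\grs u \equiv C$ differs from it by a field $w$ with $\grs w = 0$; by the usual argument, from $\partial_k\partial_i w_j = \partial_i\partial_k w_j$ one gets that $\nabla w$ is a constant skew-symmetric matrix on the connected domain $\omega$, i.e.\ $w$ is an infinitesimal rigid motion. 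Combined with the existence of minimisers noted in Remark~\ref{rmk:Wh-ass-and-ex}, this shows that \eqref{eq:opt-config-lvk} describes precisely the minimisers of $\mathcal{I}_{\rm lvK}$, unique in the asserted sense, with minimal energy $\tfrac12\gamma|\omega|$.

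I do not anticipate a real obstacle: the whole mechanism is already contained in the algebra of Section~\ref{subsec:effective-moduli}, and the only points that deserve a line of justification are the two rigidity statements — constant Hessian implies quadratic plus affine, and the kernel of $\grs$ on a connected domain consists of infinitesimal rigid motions — together with the observation that $Q_2^0$, like $Q_2^{\ast}$, is positive definite on $\mathbb{R}^{2\times 2}_{\tmop{sym}}$, so that both squares in \eqref{eq:Q2bar} are genuinely nonnegative. All of these are classical, so the argument is short.
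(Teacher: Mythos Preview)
Your proposal is correct and follows essentially the same route as the paper: substitute the completion-of-squares identity \eqref{eq:Q2bar} into the energy, use positive definiteness of $Q_2^0$ and $Q_2^{\ast}$ to obtain the sharp lower bound $\tfrac{\gamma}{2}|\omega|$, and read off the pointwise conditions $\nabla^2 v \equiv \text{const}$ and $\grs u \equiv \text{const}$ characterising equality. The paper's proof is terser---it stops at the a.e.\ conditions and leaves the passage to the explicit polynomial representatives and the uniqueness statements implicit---whereas you spell out the two elementary rigidity facts; this extra detail is welcome but does not change the argument.
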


\begin{theorem}\label{thm:lki-minimizers} 
Up to the addition of an affine transformation, the minimisers of $\mathcal{I}_{\rm lKi}$, eq. \eqref{eq:energy-lki}, are of the form 
\begin{align}\label{eq:opt-config-lki}
  v (x) 
  = \frac{1}{2} x^{\top} Fx, \quad 
  F \in \mathcal{N} \assign \tmop{argmin} \big\{ Q_2^{\ast} (F - F_0) : F \in
    \mathbb{R}^{2 \times 2}_{\tmop{sym}},\ \det F = 0 \big\} 
\end{align}
where $Q_2^{\ast}, F_0$ are given in \eqref{eq:Q20-Q2ast-def} and \eqref{eq:E0-F0-def}, respectively. 
\end{theorem}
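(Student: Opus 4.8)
The plan is to reduce the minimisation of $\mathcal{I}_{\rm lKi}$ to the explicit form \eqref{eq:Q2barstar} obtained in the previous section and then to exploit the special structure of $W^{2,2}_{\rm sh}(\omega)$. Recall that by \eqref{eq:energy-lki} and \eqref{eq:Q2barstar} we have, for $v \in W^{2,2}_{\rm sh}(\omega)$,
\[
  \mathcal{I}_{\rm lKi}(v) = \frac{1}{2}\int_{\omega} \overline{Q}^{\star}_2(-\nabla^2 v)
  = \frac{1}{2}\int_{\omega} \Big( \gamma + Q_2^{\ast}\big( -\nabla^2 v - F_0 \big) \Big),
\]
so that, the constant $\gamma$ and $|\omega|$ being irrelevant, minimising $\mathcal{I}_{\rm lKi}$ amounts to minimising $\int_\omega Q_2^{\ast}(\nabla^2 v + F_0)$ over $W^{2,2}_{\rm sh}(\omega)$. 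Here I use that $W^{2,2}_{\rm sh}(\omega)$ is precisely the class of $W^{2,2}$ functions whose Hessian has vanishing determinant a.e.\ (the ``sh'' standing for the developable/second-fundamental-form-degenerate condition), which is the natural constraint set on which the linearised Kirchhoff energy is finite.

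First I would establish the lower bound. For any admissible $v$, pointwise convexity of $Q_2^{\ast}$ and Jensen's inequality give
\[
  \frac{1}{|\omega|}\int_\omega Q_2^{\ast}(\nabla^2 v + F_0) \,\mathrm{d}x
  \ \ge\ Q_2^{\ast}\!\Big( \tfrac{1}{|\omega|}\textstyle\int_\omega (\nabla^2 v + F_0)\,\mathrm{d}x \Big)
  \ =\ Q_2^{\ast}\!\big( \overline{\nabla^2 v} + F_0 \big),
\]
where $\overline{\nabla^2 v}$ denotes the average Hessian. The key point is then that $\overline{\nabla^2 v}$ ranges exactly over the set of symmetric $2\times 2$ matrices with $\det \le 0$: indeed, if $\det\nabla^2 v = 0$ a.e., a classical fact about maps with degenerate Hessian (each such $v$ is, up to affine terms, a function of one linear coordinate on each region, so $\nabla^2 v = \kappa(x)\, n\otimes n$ with $|n|=1$, a rank-one symmetric matrix) forces the average to be an average of rank-one symmetric matrices, hence of signature contained in $\{0,+\}$ or $\{0,-\}$ once one accounts for signs — more robustly, one checks $\det \overline{\nabla^2 v}\le 0$ by the inequality $\det(A+B)\le \det A + \det B + \text{(mixed)} $ specialised to rank-one matrices, or simply by noting that the Hessian of an affine-free developable surface cannot have two nonzero eigenvalues of the same sign in a way that survives averaging. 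Combining, $\mathcal{I}_{\rm lKi}(v) \ge \tfrac{1}{2}\gamma|\omega| + \tfrac12 |\omega|\,\min\{Q_2^{\ast}(F - F_0): \det F \le 0\}$, and since $Q_2^{\ast}$ is positive definite while $F_0$ may or may not satisfy $\det F_0\le 0$, this minimum over $\det F\le 0$ is attained on the boundary $\det F = 0$ whenever $F_0$ itself is not admissible, and trivially at $F=F_0$ otherwise (in which case $\det F_0 = 0$ can be arranged by the classification below, or the statement degenerates). Thus the lower bound is $\tfrac12\gamma|\omega| + \tfrac12|\omega|\min_{\det F=0} Q_2^{\ast}(F-F_0)$, i.e.\ exactly the value claimed for the quadratic surfaces in \eqref{eq:opt-config-lki}.

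Next I would verify the upper bound by exhibiting the minimiser: for $F \in \mathcal{N}$, set $v(x) = \tfrac12 x^\top F x$. Since $\det F = 0$, the Hessian $\nabla^2 v \equiv F$ has vanishing determinant, so $v \in W^{2,2}_{\rm sh}(\omega)$ (this is where convexity of $\omega$ is used in the original $\Gamma$-limit, but for computing the energy of an explicit quadratic it is immediate), and $\mathcal{I}_{\rm lKi}(v) = \tfrac12\gamma|\omega| + \tfrac12|\omega| Q_2^{\ast}(F-F_0)$, matching the lower bound exactly when $F$ minimises $Q_2^{\ast}(\cdot - F_0)$ subject to $\det(\cdot)=0$. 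Hence every such quadratic $v$ is a minimiser. For the converse direction — that \emph{every} minimiser is of this form up to an affine map — I would argue that equality in Jensen forces $Q_2^{\ast}(\nabla^2 v + F_0)$ to be constant a.e.\ and, by strict convexity of $Q_2^{\ast}$ on $\mathbb{R}^{2\times 2}_{\rm sym}$, that $\nabla^2 v$ is a.e.\ equal to its average, a constant matrix $F\in\mathcal{N}$; integrating twice gives $v(x) = \tfrac12 x^\top F x + (\text{affine})$.

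The main obstacle I anticipate is the precise handling of the constraint set: proving rigorously that the average Hessian of an arbitrary $v\in W^{2,2}_{\rm sh}(\omega)$ satisfies $\det(\overline{\nabla^2 v})\le 0$ (equivalently, that one cannot do better than restricting to $\det F = 0$ quadratics), and dually that strict convexity genuinely pins down the minimiser to a quadratic rather than merely to a function with constant Hessian-value-of-$Q_2^\ast$. The first needs either the structure theorem for $W^{2,2}$ isometric immersions / developable maps (rank-one Hessian a.e., as in the work underlying \cite{friesecke_hierarchy_2006,schmidt_plate_2007}) or a direct convexity argument that $\det$ is concave along the relevant one-parameter families; the second is a standard but slightly delicate strict-convexity-in-the-integrand argument. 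Everything else is bookkeeping with the explicit formula \eqref{eq:Q2barstar} from Section~\ref{subsec:effective-moduli}.
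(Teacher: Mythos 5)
Your overall plan has the right skeleton (reduce to the explicit form $\overline{Q}^{\star}_2$, establish a lower bound matching an explicit family of competitors, then analyse the equality case), but the lower bound via Jensen is incorrect and the equality analysis is substantially harder than you suggest.

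\emph{The Jensen lower bound is not tight, and the determinant claim is false.} You assert that $\det\overline{\nabla^2 v}\le 0$ for $v\in W^{2,2}_{\rm sh}(\omega)$. This is not true: the Hessian of a developable map is a.e.\ of the form $\lambda(x)\,n(x)\otimes n(x)$ with $n(x)\in S^1$, and if $\lambda>0$ throughout while the ruling direction $n$ sweeps through several directions (as it does, for instance, for a portion of a cone away from its apex), the average is a sum of positive semi-definite rank-one matrices and generically has $\det>0$. Moreover, even granting $\det\overline{\nabla^2 v}\le 0$, your bound would read $\min_{\det F\le 0}Q_2^{\ast}(F-F_0)$, which is strictly smaller than the claimed $\min_{\det F=0}Q_2^{\ast}(F-F_0)$ whenever $\det F_0<0$ (you acknowledge this but do not resolve it). In fact Jensen is the wrong tool here altogether: since $\det\nabla^2 v(x)=0$ a.e., you already have the \emph{pointwise} lower bound $Q_2^{\ast}(\nabla^2 v(x)-F_0)\ge\min\{Q_2^{\ast}(F-F_0):\det F=0\}$, and integrating this gives the correct lower bound directly — no averaging is needed, and the bound is tight.

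\emph{The equality case is where the real work is, and strict convexity alone does not give it.} The pointwise argument shows that $v$ is a minimiser iff $\nabla^2 v(x)\in\mathcal{N}$ a.e. If $\#\mathcal{N}>1$ there is no reason, from convexity alone, why $\nabla^2 v$ should be \emph{constant}: one must rule out a minimiser assembled by patching together several elements of $\mathcal{N}$ over different subregions. (Your Jensen equality reasoning would have forced constancy, but precisely because Jensen is not tight one cannot invoke it.) The paper's proof supplies the missing geometric ingredient: the local representation of the Hessian of a $W^{2,2}_{\rm sh}$ function along a leading curve, $\nabla^2 v(\gamma(t)+s\nu(t))=\frac{\lambda(t)}{1-s\kappa(t)}\gamma'(t)\otimes\gamma'(t)$ (Proposition~\ref{thm:repr-hessian}), combined with the algebraic observation that any two distinct elements of $\mathcal{N}$ are \emph{linearly independent} (a consequence of strict convexity of $Q_2^{\ast}$ and $\det F=0$). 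Varying $s$ with $\kappa(t)\neq 0$ would produce distinct, linearly \emph{dependent} matrices all in $\mathcal{N}$ — a contradiction — forcing $\kappa\equiv 0$, then $\gamma'$ constant, then $\lambda$ constant, hence $\nabla^2 v$ locally constant, and by a covering argument globally constant. This developability rigidity is essential and cannot be replaced by a convexity-in-the-integrand argument.
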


\begin{remark}
Describing symmetric $2 \times 2$ matrices $A$ by vectors $a \in \mathbb{R}^3$ as in Section \ref{subsec:effective-moduli}, the set $\mathcal{N}$ is the set of touching points of the two quadrics $\{ a \in \mathbb{R}^3 : a_1 a_2 - a_3^2 = 0 \}$ (a cone) and $\{ a \in \mathbb{R}^3 : a^{\top} M^{\ast} a = c_m \}$ (an ellipsoid), where $c_m = Q_2^{\ast}(F - F_0)$ with $F \in \mathcal{N}$. If $\#\mathcal{N} \ge 3$, intersecting with an affine plane $P$ containing three distinct points of $\mathcal{N}$ shows that $\mathcal{N} \cap P$ is an ellipse and then even $\mathcal{N} \subset P$. This shows that either $\#\mathcal{N} = 1$ and there is a unique minimizer, or $\#\mathcal{N} = 2$ and there are precisely two minimizers, or $\mathcal{N}$ is an affine ellipse and to each `winding direction' $\mathbb{R} e$, $e \in S^1$, there is a unique curvature $\lambda = \lambda(e)$ such that $\nabla^2 v \equiv \lambda e \otimes e$. 
\end{remark}

\begin{theorem}\label{thm:asymptotic-minimizers} 
Suppose that $(u^{\theta}, v^{\theta})$ are minimisers of $\mathcal{I}^{\theta}_{\rm vK}$, eq. \eqref{eq:energy-vk}. 
\begin{enumeratealpha}
\item As $\theta \to 0$, up to infinitesimal rigid motions in the in-plane component and up to the addition of affine transformations in the out-of-plane compenent, $(\theta^{1/2} u^{\theta}, v^{\theta}) \rightharpoonup (u, v)$ in $W^{1,2}(\omega,\mathbb{R}^2) \times W^{2,2}(\omega;\mathbb{R})$ with $(u, v)$ as in \eqref{eq:opt-config-lvk}. 

\item As $\theta \to \infty$, up to the addition of affine transformations in the out-of-plane component and up to passing to a subsequence, $v^{\theta} \rightharpoonup v$ in $W^{2,2}(\omega;\mathbb{R})$ with $v$ as in \eqref{eq:opt-config-lki}. 
\end{enumeratealpha}
\end{theorem}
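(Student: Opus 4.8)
\begin{proof*}{Proof plan.}
The plan is to read both statements off the $\Gamma$-convergence of the family $(\mathcal{I}^{\theta}_{\rm vK})$ to the two linearised energies, established in \cite[Section~6]{DeBenitoSchmidt:19a}, combined with the fundamental theorem of $\Gamma$-convergence and the explicit description of the limiting minimisers in Theorems~\ref{thm:lvk-minimizers} and~\ref{thm:lki-minimizers}. The two quantitative ingredients taken from \cite[Section~6]{DeBenitoSchmidt:19a} are (i) $\Gamma$-convergence and (ii) equi-coercivity, both in the topology of weak $W^{1,2}\times W^{2,2}$ convergence modulo infinitesimal rigid motions in the in-plane and affine maps in the out-of-plane component. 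As a recurring tool I use the decomposition \eqref{eq:Q2bar}, which writes $\overline{Q}_2[E,F]$ as the constant $\gamma$ plus two positive semidefinite quadratic terms, so that $\mathcal{I}^{\theta}_{\rm vK}(u,v) \ge \tfrac12|\omega|\,\gamma + \tfrac12\int_{\omega} Q_2^{\ast}(-\nabla^2 v - F_0)$ for all $(u,v)$ and all $\theta$; since $Q_2^{\ast}$ is positive definite on $\mathbb{R}^{2\times2}_{\rm sym}$, any uniform energy bound already controls $\nabla^2 v$ in $L^2$.

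\emph{Part (a), $\theta\to0$.} I pass to the rescaled in-plane variable $\tilde u := \theta^{1/2}u$, turning $\mathcal{I}^{\theta}_{\rm vK}(u,v)$ into $\tfrac12\int_{\omega}\overline{Q}_2[\grs\tilde u + \tfrac12\theta^{1/2}\nabla v\otimes\nabla v,\ -\nabla^2 v]$, whose minimisers are the pairs $(\theta^{1/2}u^{\theta}, v^{\theta})$. On energy-bounded sequences $v^{\theta}$ is bounded in $W^{2,2}$ by the remark above, hence $\nabla v^{\theta}\otimes\nabla v^{\theta}$ is bounded in $L^2$ (Sobolev embedding $W^{2,2}(\omega)\hookrightarrow W^{1,4}(\omega)$) and the perturbation $\tfrac12\theta^{1/2}\nabla v^{\theta}\otimes\nabla v^{\theta}\to0$ in $L^2$; inserting this into the $Q_2^0$-term of \eqref{eq:Q2bar} and applying Korn's inequality yields a uniform $W^{1,2}$ bound on $\tilde u^{\theta}$ modulo infinitesimal rigid motions. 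With the perturbation vanishing, these functionals $\Gamma$-converge to $\mathcal{I}_{\rm lvK}$ (this is \cite[Section~6]{DeBenitoSchmidt:19a}; equi-coercivity is the bound just sketched, with the matching upper bound obtained by testing $\min\mathcal{I}^{\theta}_{\rm vK}$ against the fixed competitor \eqref{eq:opt-config-lvk}). By the fundamental theorem of $\Gamma$-convergence, every subsequence of $(\theta^{1/2}u^{\theta}, v^{\theta})$ has a further subsequence converging, after removal of the gauge, to a minimiser of $\mathcal{I}_{\rm lvK}$; by Theorem~\ref{thm:lvk-minimizers} this minimiser is unique modulo the gauge and equals \eqref{eq:opt-config-lvk}, so the full family converges.

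\emph{Part (b), $\theta\to\infty$.} Here I work with the reduced functional on the out-of-plane component, $J^{\theta}(v):=\inf_u\mathcal{I}^{\theta}_{\rm vK}(u,v)$, of which $v^{\theta}$ is a minimiser. A uniform energy bound forces, via the $Q_2^0$-term, $\grs u^{\theta}+\tfrac12\nabla v^{\theta}\otimes\nabla v^{\theta}\to0$ in $L^2$; passing to the limit with the strong convergence $\nabla v^{\theta}\to\nabla v$ in $L^p$ this constrains $v$ to be ``infinitesimally developable'', $\det\nabla^2 v=0$, and on this set the infimum over $u$ turns $\overline{Q}_2$ into $\overline{Q}_2^{\star}$ by \eqref{eq:Q2barstar}. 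Accordingly, \cite[Section~6]{DeBenitoSchmidt:19a} gives $J^{\theta}\to\mathcal{I}_{\rm lKi}$ in the sense of $\Gamma$-convergence (weak $W^{2,2}$, modulo affine maps) together with equi-coercivity — the lower bound from the quadratic term above, the upper bound from a $\theta$-independent competitor: for $v$ of the form \eqref{eq:opt-config-lki} one exhibits $u$ with $\grs u+\tfrac12\nabla v\otimes\nabla v\equiv0$, a cubic polynomial in the degenerate direction of $\nabla^2 v$. The fundamental theorem of $\Gamma$-convergence then yields $v^{\theta}\rightharpoonup v$ in $W^{2,2}$ along a subsequence and modulo affine maps, with $v$ a minimiser of $\mathcal{I}_{\rm lKi}$, hence of the form \eqref{eq:opt-config-lki} by Theorem~\ref{thm:lki-minimizers}. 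As the minimiser of $\mathcal{I}_{\rm lKi}$ need not be unique (cf.\ the Remark after Theorem~\ref{thm:lki-minimizers}), only subsequential convergence is available.

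\emph{Main obstacle.} All the substantive analysis already lives in \cite[Section~6]{DeBenitoSchmidt:19a}; the only delicate point in assembling the present proof is making sure its $\Gamma$-convergence and equi-coercivity statements are phrased in exactly the topologies and exactly modulo the gauge groups used above, and — for part (a) — that the vanishing quadratic perturbation $\tfrac12\theta^{1/2}\nabla v\otimes\nabla v$ is harmless for both the $\Gamma$-$\liminf$ inequality and the recovery sequences (this is where \eqref{eq:Q2-bounds}, Korn's inequality and the compact embedding $W^{2,2}(\omega)\hookrightarrow\hookrightarrow W^{1,p}(\omega)$ enter). Granting these inputs, everything else is the standard combination of the fundamental theorem of $\Gamma$-convergence with Theorems~\ref{thm:lvk-minimizers} and~\ref{thm:lki-minimizers}, the only subtlety being uniqueness of the limit in (a) against possible non-uniqueness in (b).
\end{proof*}
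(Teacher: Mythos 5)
Part (a) is essentially the paper's argument (the paper simply cites \cite[Theorems 7,10,11]{DeBenitoSchmidt:19a}, which is what you do after sketching the rescaling and equi-coercivity), so no issue there.

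For part (b) there is a genuine gap. The theorems in this section are stated for a \emph{general} bounded Lipschitz domain $\omega$ (see the sentence immediately preceding Theorem \ref{thm:lvk-minimizers}), whereas the $\Gamma$-convergence of $\mathcal{I}^{\theta}_{\rm vK}$ to $\mathcal{I}_{\rm lKi}$ in \cite[Theorem 8]{DeBenitoSchmidt:19a} is proved only for \emph{convex} $\omega$ -- the recovery sequence construction for the constrained Kirchhoff limit relies on it. Your proof invokes that $\Gamma$-convergence directly and therefore only proves the statement for convex $\omega$. The paper's proof devotes almost all its work to this point: the compactness of \cite[Theorem 7]{DeBenitoSchmidt:19a} does not need convexity, so one gets $v^{\theta}\rightharpoonup v\in W^{2,2}_{\rm sh}$; one then constructs a $\theta$-dependent competitor $\bar{u}=u'+\theta^{-1/2}u''$ (the extra linear term $u''(x)=Ex$ is what makes $\mathcal{I}^{\theta}_{\rm vK}(\bar{u},\bar{v})=\mathcal{I}_{\rm lKi}(\bar{v})$ \emph{exactly}, not merely bounded), exhausts $\omega$ by countably many disjoint convex subdomains $\omega_n$ via the Vitali covering theorem, applies the $\Gamma$-$\liminf$ inequality on each $\omega_n$, and uses Theorem \ref{thm:lki-minimizers} (valid on general Lipschitz domains) to conclude $\nabla^2 v\in\mathcal{N}$ a.e. Your proof skips all of this. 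You should either restrict your claim to convex $\omega$ or add the localisation argument. A secondary, more cosmetic point: your test competitor with $\grs u+\tfrac12\nabla v\otimes\nabla v\equiv 0$ only gives $\mathcal{I}^{\theta}_{\rm vK}(\bar u,\bar v)=\tfrac12\int_\omega\overline{Q}_2[0,-\nabla^2\bar v]$, which in general exceeds $\mathcal{I}_{\rm lKi}(\bar v)=\tfrac12\int_\omega\overline{Q}_2^{\star}(-\nabla^2\bar v)$; the $\theta^{-1/2}$-scaled linear correction that minimises out the $Q_2^0$-block is needed to make the energy identity tight enough for the sandwich argument.
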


\begin{proof*}{Proof of Theorem \ref{thm:lvk-minimizers}}
By \eqref{eq:energy-lvk} and \eqref{eq:Q2bar}
\begin{align*}
  &\mathcal{I}_{\rm lvK} (u, v) 
    = \frac{1}{2} 
      \int_{\omega} \overline{Q}_2 [ \grs u, - \nabla^2 v ] \mathd x \\
  &~~ = \frac{1}{2} \int_{\omega} \int_{- 1 / 2}^{1 / 2} 
       Q_2^0 ( \grs u + \mathcal{L}_0^{-1}\mathcal{L}_1 \nabla^2 v - E_0 ) 
       + Q_2^{\ast} ( - \nabla^2 v - F_0 ) \mathd x + \frac{\gamma}{2} | \omega |  
\end{align*}
with $u \in W^{1, 2} (\omega ; \mathbb{R}^2)$ and $v \in W^{2, 2} (\omega ; \mathbb{R})$ is minimal (with value $\gamma|\omega| / 2$) if and only if $\nabla^2 v = - F_0$ and $\grs u = \mathcal{L}_0^{-1}\mathcal{L}_1 F_0 + E_0$ a.e. 
\end{proof*}

\begin{proof*}{Proof of Theorem \ref{thm:asymptotic-minimizers}}
a) is immediate from \cite[Theorems 7,10,11]{DeBenitoSchmidt:19a}. b) directly follows from \cite[Theorems 7,8,9]{DeBenitoSchmidt:19a} if $\omega$ is convex. For general $\omega$ first note that the compactness result in \cite[Theorem 7]{DeBenitoSchmidt:19a} does not use convexity, so that $v^{\theta} \rightharpoonup v$ in $W^{2,2}(\omega;\mathbb{R})$ for some $v \in W^{2, 2}_{\rm sh}$. Now fix $F = (f_{ij})_{1\leqslant i,j \leqslant 2} \in \mathcal{N}$ and $\bar{v}(x) = \frac{1}{2} x^{\top} F x$. Since $\det F = 0$, the function $u'(x) = -\frac{1}{3} f_{11} x_1^3 (f_{11}, f_{12}) - f_{12} x_1^2 x_2 (f_{11}, f_{12}) - f_{12} x_1 x_2^2 (f_{12}, f_{22}) - \frac{1}{3} f_{22} x_2^3 (f_{12}, f_{22})$ satisfies $\grs u' + \frac{1}{2} \nabla \bar{v} \otimes \nabla \bar{v} = 0$. Also choose $u''(x) = Ex$ with $E = \mathcal{L}_0^{-1} \mathcal{L}_1 F + E_0$, cf.\ \eqref{eq:Q2bar} and \eqref{eq:E0-F0-def}. Then for $\bar{u} = u' + \theta^{-1/2} u''$ we have by \eqref{eq:Q2barstar}
\begin{align*}
  \mathcal{I}^{\theta}_{\rm vK}(\bar{u},\bar{v}) 
  = \frac{1}{2}  \int_{\omega} \overline{Q}_2 [ \grs u'', - \nabla^2 \bar{v} ] 
  = \frac{1}{2}  \int_{\omega} \overline{Q}^{\star}_2 (\nabla^2 \bar{v}) 
   = \mathcal{I}_{\rm lKi}(\bar{v}). 
\end{align*}
With the help of the Vitali covering theorem we can exhaust $\omega$ up to a set of negligible measure with disjoint convex subdomains $\omega_1, \omega_2, \ldots$. Denoting the accordingly restricted functionals by $\mathcal{I}^{\theta}_{\rm vK} (\ \cdot\ ; \omega_n)$, $\mathcal{I}_{\rm lKi}(\ \cdot\ ; \omega_n)$ we have 
\begin{align*}
  \tmop{linf}_{\theta \to \infty} \mathcal{I}^{\theta}_{\rm vK}(\bar{u},\bar{v}) 
  &\geqslant \underset{\theta \to \infty}{\tmop{linf}} \mathcal{I}^{\theta}_{\rm vK}(u^{\theta}, v^{\theta}) 
   \geqslant \sum_n \underset{\theta \to \infty}{\tmop{linf}} \mathcal{I}^{\theta}_{\rm vK}(u^{\theta}, v^{\theta}; \omega_n) \\ 
  &\geqslant \sum_n \mathcal{I}_{\rm lKi}(v; \omega_n) 
   \geqslant \sum_n \mathcal{I}_{\rm lKi}(\bar{v}; \omega_n) \\ 
  &= \mathcal{I}_{\rm lKi}(\bar{v}) 
  = \underset{\theta \to \infty}{\tmop{linf}} \mathcal{I}^{\theta}_{\rm vK}(\bar{u},\bar{v}), 
\end{align*}
where we have made use of the lower bound in the $\Gamma$-convegence of $\mathcal{I}^{\theta}_{\rm vK} (\cdot; \omega_n)$ to $\mathcal{I}_{\rm lKi}(\cdot; \omega_n)$, see \cite[Theorem 8]{DeBenitoSchmidt:19a}, in the third step and of Theorem \ref{thm:lki-minimizers} in the fourth step. So we must have $\mathcal{I}_{\rm lKi}(v; \omega_n) = \mathcal{I}_{\rm lKi}(\bar{v}; \omega_n)$ for all $n$ and hence $\nabla^2 v \in \mathcal{N}$ a.e.\ on $\omega$ and so the claim follows from Theorem \ref{thm:lki-minimizers}. 
\end{proof*}

As for Theorem \ref{thm:lki-minimizers}, it is straightforward to see that $v$ as defined in the theorem is a minimisers of $\mathcal{I}_{\rm lKi}$. However, the proof that every minimiser of $\mathcal{I}_{\rm lKi}$ is necessarily of this form needs some work. The difficulty lies in excluding the possibility of constructing a minimiser by piecing together functions whose Hessian belongs to the set $\mathcal{N}$, all with minimal energy but lacking a nice global structure. Yet it is possible to obtain a global representation of the Hessian which shows that it must be constant over $\omega$ so that minimisers are (up to an affine transformation) indeed cylindrical. In order to do this we require (cf.\ {\cite{pakzad_sobolev_2004}}):

\begin{definition}
  \label{def:bodies-arms}Let $\omega' \subset \mathbb{R}^2$ a convex bounded domain and $y \in W^{1, 2}(\omega', \mathbb{R}^3)$ be an isometry. A connected maximal subdomain of $\omega'$ where $\nabla y$ is constant and $y$ is affine whose boundary contains more than two segments inside $\omega'$ is called a {\em body}. A {\em leading curve} is a curve orthogonal to the preimages of $\nabla y$ on the open regions where $\nabla y$ is not constant, parametrised by arc-length. We define an {\em arm} to be a maximal subdomain $\omega (\gamma)$ which is {\em covered} (parametrised) by some leading curve $\gamma$ as follows: 
\[ \omega (\gamma) \subset \{ \phi_{\gamma} (t, s) \assign \gamma (t) + s\nu(t) : s \in \mathbb{R}, t \in [0, l] \}, \]
where $\nu(t) = \gamma' (t)^{\perp}$. We also speak of a {\em covered domain}.
\end{definition}

\begin{figure}[h]
  \centering
  \vspace{-1.2cm}

  \includegraphics{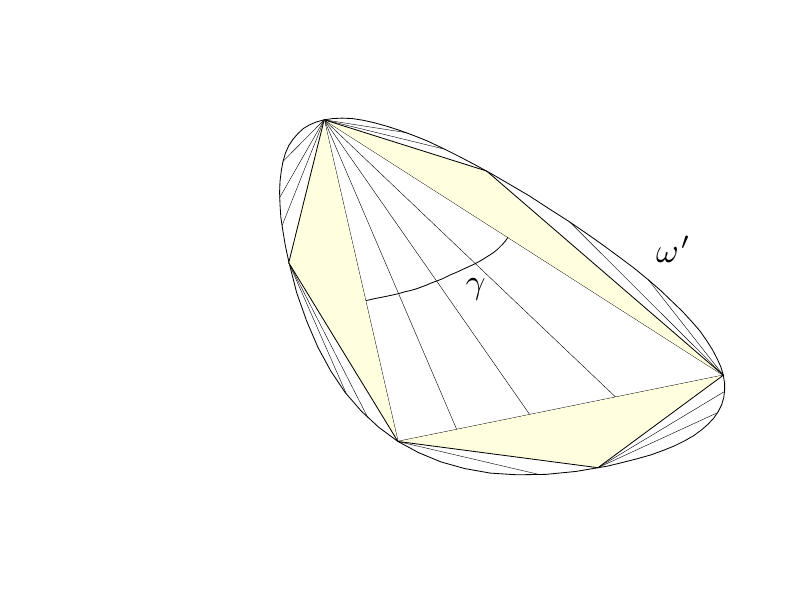}
  \vspace{-1.2cm}

  \caption{The partition of $\omega'$ into bodies and arms. $\nabla y$ is
  constant in the bodies (colored) and along each of the straight lines making
  up the arms (white).}
\end{figure}

The existence of covered domains for isometric immersions $y \in W^{1, 2}$ is
shown in {\cite[Corollary 1.2]{pakzad_sobolev_2004}}.

\begin{proposition}
  \label{thm:repr-hessian}Let $v \in W^{2, 2}_{\rm sh} (\omega)$ and
  $x_0 \in \omega$. There exists a neighbourhood $U$ of $x_0$ such that, if
  $\nabla^2 v \neq 0$ a.e. in $U$, then for a suitable $\varepsilon > 0$ there
  exist maps $\gamma \in W^{2, 2} ((- \varepsilon, \varepsilon) ;
  \mathbb{R}^2)$ and $\lambda \in L^2 ((- \varepsilon, \varepsilon))$ such
  that $U \subset \{ \gamma (t) + s \nu (t) : s \in \mathbb{R}, t \in (-
  \varepsilon, \varepsilon) \}$ and
  \begin{equation}
    \label{eq:thm:repr-matrix:hessian} \nabla^2 v (\gamma (t) + s \nu (t))
    = \frac{\lambda (t)}{1 - s \gamma'' (t)} \gamma' (t) \otimes \gamma' (t)
  \end{equation}
  if $\gamma (t) + s \nu (t) \in U$.
\end{proposition}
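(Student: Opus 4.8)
The goal is to obtain the pointwise formula \eqref{eq:thm:repr-matrix:hessian} for the Hessian of a $W^{2,2}$ function that is short, i.e.\ whose associated deformation is an isometric immersion of the flat metric. The plan is to reduce to the theory of isometric immersions from \cite{pakzad_sobolev_2004} by passing to a suitable auxiliary isometry and transporting its structure back to $v$. First I would localise: since $\nabla^2 v \neq 0$ a.e.\ in a neighbourhood $U$ of $x_0$, there is a direction along which the second fundamental form is nontrivial, and after shrinking $U$ and rotating coordinates we may assume $U$ is a convex (say rectangular) subdomain. The key device is to consider the graph map $y(x) = (x, \varepsilon_0 v(x))$ or, better, to use the fact that $W^{2,2}_{\rm sh}$ functions are exactly (up to lower order terms) the out-of-plane displacements of $W^{2,2}$ isometric immersions in the Kirchhoff sense; concretely, by the results underlying \cite{friesecke_hierarchy_2006,pakzad_sobolev_2004} one can associate to $v$ an isometry $y \in W^{2,2}(U;\mathbb{R}^3)$ whose second fundamental form is $\nabla^2 v$ (this is the standard correspondence: on a convex domain, a short map admits a Kirchhoff isometry extension, or one argues directly with the rank-one structure of $\nabla^2 v$ forced by the vanishing of the Gauss curvature $\det \nabla^2 v = 0$, which is the linearised Gauss equation satisfied distributionally by short functions).

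Once we are in the setting of a $W^{2,2}$ isometry $y$ on a convex domain, I would invoke \cite[Corollary 1.2]{pakzad_sobolev_2004}: through a.e.\ point there passes a developing (leading) curve $\gamma \in W^{2,2}((-\varepsilon,\varepsilon);\mathbb{R}^2)$, parametrised by arc length, such that on the ruled neighbourhood $\{\gamma(t) + s\nu(t)\}$ the gradient $\nabla y$ is constant along each segment $s \mapsto \gamma(t)+s\nu(t)$, with $\nu(t) = \gamma'(t)^{\perp}$. The rulings are the lines where the second fundamental form vanishes transversally, so the Hessian of $v$ is a rank-one matrix of the form $\lambda(t,s)\, \gamma'(t)\otimes\gamma'(t)$ along the leading curve. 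The precise $s$-dependence $\lambda(t,s) = \lambda(t)/(1 - s\gamma''(t))$ is then forced by differentiating the ruling parametrisation: the map $\phi_\gamma(t,s) = \gamma(t)+s\nu(t)$ has Jacobian $\det D\phi_\gamma = 1 - s\,\gamma''(t)\cdot(\text{something})$, and demanding that $\nabla^2 v$ be a genuine Hessian (i.e.\ that the mixed derivatives commute, equivalently that $\nabla v$ be a gradient) along the foliation yields the transport equation whose solution is exactly this quotient. I would do this computation by writing $w(t,s) = \nabla v(\phi_\gamma(t,s))$, noting $\partial_s w = 0$ along rulings in the direction $\gamma'$ up to the Jacobian factor, and integrating the resulting first-order ODE in $s$; the coefficient $\lambda \in L^2$ comes out as the value on $\{s=0\}$, and its $L^2$ bound follows from $\|\nabla^2 v\|_{L^2(U)}$ together with the change of variables $\phi_\gamma$, whose Jacobian is bounded below on a small enough neighbourhood since $\gamma'' \in L^2$.

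The main obstacle I anticipate is the very first reduction — rigorously producing the leading-curve structure for $v$ itself rather than for a smooth isometry. Low regularity is delicate here: $\nabla^2 v \in L^2$ only, the "rulings" are defined only a.e., and one must make sense of "$\nabla y$ constant along a segment" and of the foliation $\phi_\gamma$ with $\gamma \in W^{2,2}$ (so $\nu \in W^{1,2} \subset C^0$, which just barely suffices for the change of variables to be well defined and for $1 - s\gamma'' \neq 0$ near $s=0$). This is precisely the content that \cite{pakzad_sobolev_2004} supplies, so the honest work is in checking that $v$ fits its hypotheses on the chosen neighbourhood and in the ODE/change-of-variables bookkeeping to extract \eqref{eq:thm:repr-matrix:hessian} with the stated regularity of $\gamma$ and $\lambda$; everything after the citation is elementary calculus along the foliation.
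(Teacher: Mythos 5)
Your route is essentially the same as the paper's: associate an isometric immersion $y$ to $v$, invoke the leading--curve/covered--domain structure from \cite{pakzad_sobolev_2004}, and read off the rank-one, $\frac{1}{1-s\gamma''}$ form of the Hessian from the foliation. Two points you gloss over are exactly where the real work hides, and one of them as stated is actually incorrect.

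First, the graph map $y(x)=(x,\varepsilon_0 v(x))$ is \emph{not} an isometry (its induced metric is $I+\varepsilon_0^2\nabla v\otimes\nabla v$), so that suggestion would not let you cite \cite{pakzad_sobolev_2004} directly. You do hedge towards the ``Kirchhoff extension'' alternative, but then the missing step is a truncation: $v\in W^{2,2}_{\rm sh}$ is not a priori in $W^{1,\infty}$, and the extension to an isometry (\cite[Theorem 7]{friesecke_hierarchy_2006}) needs a $W^{1,\infty}$-small candidate for $y_3$. The paper therefore first uses \cite[Theorem 10]{friesecke_hierarchy_2006} to produce a Lipschitz truncation $v_k\in W^{2,2}\cap W^{1,\infty}$ agreeing with $v$ on a set $S_k$ whose interior contains $x_0$, and then rescales by a small $\eta$ so that $\eta v_k$ extends to an isometry $y$ with $y_3=\eta v_k$. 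Without this truncation the scaling step has nothing to bite on.

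Second, your claim that the associated isometry has second fundamental form equal to $\nabla^2 v$ is off by a factor: for an isometry $y$ one has $\nabla^2 y_3=-n_3\,\mathrm{II}_{(y)}$, i.e.\ $\eta\nabla^2 v=-n_3\,\mathrm{II}_{(y)}$ on $S_k$, where $n_3$ is the third component of the normal. The rank-one/$\frac{1}{1-s\gamma''}$ structure from \cite[p.~111]{schmidt_plate_2007} is for $\mathrm{II}_{(y)}$, so to transfer it to $\nabla^2 v$ one must know that $n_3$ is constant along the rulings. This is not automatic; the paper cites \cite[Proposition~1, eq.~(12)]{hornung_approximation_2011} for the statement that $\nabla y$ (hence $n$, hence $n_3$) is independent of $s$ along $\gamma(t)+s\nu(t)$, which lets one absorb $n_3(t)$ into $\lambda(t)$. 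Your compatibility-ODE sketch for the $\frac{1}{1-s\gamma''}$ factor is fine in spirit (and is morally what \cite{schmidt_plate_2007} does), but the $n_3$ bookkeeping is a genuine step you would need to add before the conclusion reads off.
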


\begin{proof}
  We may without loss of generality assume that $\omega$ is convex. 
  Using {\cite[Theorem 10]{friesecke_hierarchy_2006}} take $v_k \in W^{2, 2}
  \cap W^{1, \infty}, S_k \subset \omega$ such that $x_0 \in \tmop{int} S_k$,
  $v_k = v$ on $S_k$ and $\| v_k \|_{1, \infty} \leqslant C$. By scaling $v_k$
  with $\eta > 0$ we can extend $\eta v_k$ to an isometry $y$ ({\cite[Theorem
  7]{friesecke_hierarchy_2006}}) with $\eta v_k = y_3$. Then, because $y$ is
  an isometry:
  \[ - n_3 \tmop{II}_{(y)} = \nabla^2 y_3 = \eta \nabla^2 v \text{\quad on } S_k
      \]
  where $n = y_{,1} \wedge y_{,2}$ is the normal and $\tmop{II}_{(y)} = (\nabla y)^{\top} \nabla n$ the second fundamental form of the surface $y(\omega)$. 
  Since $\nabla^2 y \neq 0$ a.e. near $x_0$, there is a neighbourhood $U$ of
  $x_0$ covered by some leading curve $\gamma$, that is: $U \subset \{ \gamma
  (t) + s \nu (t) : s \in \mathbb{R}, t \in (- \varepsilon, \varepsilon)
  \}$ and, by {\cite[p. 111]{schmidt_plate_2007}}, on $U$ we have
  \[ \tmop{II}_{(y)} (\gamma (t) + s \nu (t)) = \frac{\tilde{\lambda}
     (t)}{1 - s \gamma'' (t)} \gamma' (t) \otimes \gamma' (t), \]
  with $\tilde{\lambda} \in L^2$. Now, {\cite[Proposition 1, eq.
  (12)]{hornung_approximation_2011}} shows that $\nabla y (\gamma (t) + s
  \nu (t))$ is independent of $s$, hence $n_3 = (y_{, 1} \wedge y_{,
  2})_3$ is also independent of $s$ and we can subsume it into the function
  $\tilde{\lambda}$. Setting $\lambda (t) = - n_3 (t)  \tilde{\lambda} (t) /
  \eta$ we obtain the representation {\eqref{eq:thm:repr-matrix:hessian}}.
\end{proof}

Finally, we come to:

\begin{proof*}{Proof of Theorem \ref{thm:lki-minimizers}} 
To recapitulate, according to \eqref{eq:energy-lki} and
\eqref{eq:Q2barstar} the linearised Kirchhoff energy is given by
\begin{equation}
  \label{eq:lki-effective-functional} 
  \mathcal{I}_{\rm lKi}
  (v) = \frac{1}{2}  \int_{\omega} Q_2^{\ast} (\nabla^2 v (x) - F_0) \mathd x +
    \frac{\gamma}{2}  | \omega | 
\end{equation}
for $v \in W^{2, 2}_{\rm sh}$ (and $\infty$ otherwise). 

We observe first that the set $\mathcal{N}= \tmop{argmin} \{ Q_2^{\ast} (F -
  F_0) : F \in \mathbb{R}^{2 \times 2}_{\tmop{sym}}, \det F = 0 \}$ is not
  empty because $F \mapsto Q_2^{\ast} (F - F_0)$ is non-negative and strictly
  convex, but it also need not consist of just one point. Note next that $v$ 
  is a minimiser of 
  {\eqref{eq:lki-effective-functional}} iff $\nabla^2 v (x) \in \mathcal{N}$
  for almost every $x \in \omega$: On the one hand, every minimiser has
  finite energy and thus $\nabla^2 v$ must be pointwise a.e. in the set $\{ F
  \in \mathbb{R}^{2 \times 2}_{\tmop{sym}} : \det F = 0 \}$. On the other, any
  function $F : \omega \rightarrow \mathbb{R}^{2 \times 2}_{\tmop{sym}}$ with
  $F (x) \in \mathcal{N}$ a.e. minimises the integrand in
  {\eqref{eq:lki-effective-functional}} pointwise and thus the energy.
  
  Next we show that any two elements $F, G$ of $\mathcal{N}$ are linearly
  independent. Indeed, by strict convexity we have for all $\lambda \in (0,
  1)$:
  \[ Q_2^{\ast} (\lambda F + (1 - \lambda) G - F_0) < \lambda Q_2^{\ast} (F -
     F_0) + (1 - \lambda) Q_2^{\ast} (G - F_0) . \]
  Hence $\lambda F + (1 - \lambda) G \nin \mathcal{N}$ or else $F, G$ would
  not be minimisers. Because $Q_2^{\ast}$ attains a lower value here we must
  have $\det (\lambda F + (1 - \lambda) G) \neq 0$. But then it cannot be that
  $G = \rho F$ for any scalar $\rho \in \mathbb{R}$ or else it would hold that
  $\det (\lambda F + (1 - \lambda) G) = \det (\lambda F + (1 - \lambda) \rho
  F) = C \det F = 0$, a contradiction. Consequently, we have in particular $0
  \nin \mathcal{N}$ unless $\mathcal{N}= \{ 0 \}$. But in that case $\nabla^2
  v \equiv 0$ and the proof would be concluded.
  
  Let now $v \in W^{2, 2}_{\rm sh}$ be a minimiser for $\mathcal{I}_{\rm lKi}$. 
  Note first that $\nabla v$ cannot be constant over open sets:
  indeed we just saw that w.l.o.g. $0 \nin \mathcal{N}$ and consequently the
  condition $\nabla^2 v = 0$ is excluded for a minimiser on any set of positive 
  measure. Consider then some
  point $x_0 \in \omega$ with a neighbourhood $U$ where $\nabla v$ is not
  constant and use the representation {\eqref{eq:thm:repr-matrix:hessian}}. We
  have that, pointwise a.e.\ and over $U$:
  \[ 0 \neq \nabla^2 v (\gamma(t) + s \nu(t)) = \frac{\lambda (t)}{1 - s \kappa (t)} \gamma'
     (t) \otimes \gamma' (t) . \]
  If $\kappa (t) \neq 0$, by varying $s$ we obtain
  distinct, linearly dependent matrices $\nabla^2 v (t, s)$. 
  Because $\nabla^2 v \in \mathcal{N}$ a.e., this shows that $\kappa(t) = 0$ 
  for a.e.\ $t$. As a consequence, $\gamma'$ must be constant. But then 
  $\lambda$ is also constant or again we would have points at which $\nabla^2
  v$ is linearly dependent. Since this holds locally around every $x = \gamma
  (t) + s \gamma' (t)$, we deduce that $\nabla^2 v$ is constant on $U$ and
  because we can cover $\omega$ in this manner, there exists $F \in 
  \mathcal{N}$ such that $\nabla^2 v \equiv F$ a.e.\ over $\omega$. 
\end{proof*}

\section{Structure of minimisers for 
\texorpdfstring{$\mathcal{I}^{\theta}_{\rm vK}$}{IθvK} for small \texorpdfstring{$\theta$}{θ}}\label{sec:structure-minimisers-interpolating}

The second main contribution of this work is a first study of the properties of minimisers in the interpolating regime, ``close'' to the linearised von K{\'a}rm{\'a}n model. The results in Section \ref{sec:Lin-regimes} show that the transition from spherical to cylindrical shapes occurs in the interpolated von K{\'a}rma{\a'a}n as the strength $\theta$ of the misfit increases. We will see that for small $\theta > 0$ indeed there exists a unique stable branch of solutions emanating from a perfect spherical cap at $\theta = 0$. 

For the sake of clarity we restrict to the prototypical model from \eqref{eq:energy-btI}: 
\[ \mathcal{I}^{\theta}_{\rm vK} (u, v) = \frac{\theta}{2} 
   \int_{\omega} Q_2 (\grs u + \tfrac{1}{2} \nabla v \otimes \nabla v) \mathd
   x + \frac{1}{24} \int_{\omega} Q_2 (\nabla^2 v - I) \mathd x. \]
Natural subsequent steps along this line of work, which we do not take here, are to consider the regime of large values of $\theta$ and to investigate the existence of the conjectured critical value $\theta_c$, as well as to consider the full model derived in \eqref{eq:energy-vk}.\footnote{In Section \ref{sec:numerics} we conduct numerical experiments supporting the conjecture that this critical value exists.}

We recall that the existence of minimizers is guaranteed, cf.\ Remark \ref{rmk:Wh-ass-and-ex}. Without loss of generality wee assume that the barycenter of $\omega$ is $0$. So with $(f)_{\omega} := \frac{1}{|\omega|}\int_{\omega} f (x) \mathd x$ for a function $f$ we in particular have $(x)_{\omega} = 0$. In order to avoid ambiguities (and to apply Korn's and Poincar{\'e}'s inequalities) we restrict the
functions $w = (u, v)$ to lie in the Banach space
\[ X \assign X_u \times X_v, \]
with $X_u, X_v$ as in 
\begin{align*}
  X_u 
  &\assign \left\{ u \in W^{1, 2} (\omega ; \mathbb{R}^2) :
     ( \gra u )_{\omega} = 0 \text{ and } ( u )_{\omega} = 0 \right\}, \\ 
  X_v 
  &\assign \left\{ v \in W^{2, 2} (\omega ; \mathbb{R}) : 
     ( \nabla v )_{\omega} = 0 \text{ and } ( v )_{\omega}= 0 \right\} 
\end{align*}
and norm $\| (u,v) \|_X = ( \| u \|_{1,2}^2 + \| v \|_{2,2}^2 )^{1/2}$. By the arguments in \cite[Remark 2]{DeBenitoSchmidt:19a} working with these spaces 
does not lead to a loss of generality either: For an affine function $g$, $\nabla (v + g) \otimes \nabla (v + g)- \nabla v \otimes \nabla v$ is a symmetrised gradient. 

For small values of the parameter $\theta$ we have the following structural result on the set of minimizers showing the existence of a smooth branch of unique global minimisers. Let $v_0 (x) = \tfrac{1}{2}  | x |^2 - c_0$ with $c_0 = \frac{1}{2}  (| x |^2)_{\omega}$. 
\begin{theorem}
  \label{th:strucure-min-small-theta} There exists an $\varepsilon > 0$, a unique point $u_0 \in X_u$ and a uniquely determined $C^1$ map $\phi : [0, \varepsilon) \rightarrow X$ such that $\phi (0) = (u_0, v_0)$ and for each $\theta \in [0, \varepsilon)$: 
\[ w \in \tmop{argmin} \mathcal{I}^{\theta}_{\rm vK} 
   \quad\iff\quad 
   w = \phi (\theta). \]
\end{theorem}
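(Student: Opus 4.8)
The plan is to recognise this as an implicit-function-theorem statement about the critical-point equation of $\mathcal{I}^{\theta}_{\rm vK}$ on the Banach space $X$. First I would set up the Euler--Lagrange operator: write $F : [0,\infty) \times X \to X^{*}$ for the (Fr\'echet) derivative $D_{w}\mathcal{I}^{\theta}_{\rm vK}(w)$, made explicit from \eqref{eq:energy-btI}. At $\theta = 0$ the functional reduces to $\tfrac{1}{24}\int_{\omega} Q_{2}(\nabla^{2}v - I)$, which depends only on $v$ and is minimised precisely when $\nabla^{2}v \equiv I$, i.e.\ $v = v_{0}$ after fixing the normalisation in $X_{v}$; the in-plane part is then a separate quadratic problem $\tfrac{\theta}{2}\int_{\omega}Q_{2}(\grs u + \tfrac12 \nabla v_{0}\otimes\nabla v_{0})$ whose unique minimiser, thanks to Korn's and Poincar\'e's inequalities on $X_{u}$, is a uniquely determined $u_{0} \in X_{u}$ — but note this is the minimiser for $\theta > 0$; for $\theta = 0$ the $u$-part is unconstrained by the energy, so I must instead \emph{define} $(u_{0},v_{0})$ as the limit/branch value and check $F(0,(u_{0},v_{0})) = 0$, which holds since the $\theta$-derivative of the in-plane term vanishes at $\theta = 0$. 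This is the subtle point: the branch is pinned at $\theta = 0$ not by minimality of $\mathcal{I}^{0}_{\rm vK}$ in $u$ but by the requirement that $\phi$ be $C^{1}$ and emanate from the $\theta > 0$ minimisers, so $u_{0}$ is forced to be $\lim_{\theta \to 0} u_{0}^{\theta}$, and one should set up $F$ accordingly (e.g.\ divide the in-plane Euler--Lagrange equation by $\theta$, or equivalently work with the rescaled functional $\tfrac{1}{\theta}\mathcal{I}^{\theta}_{\rm vK}$, whose $\theta = 0$ limit is exactly the decoupled problem above).

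Next I would verify the hypotheses of the implicit function theorem at the point $(0, w_{0})$ with $w_{0} = (u_{0}, v_{0})$. Smoothness of $F$ in $(\theta, w)$ is routine: the map is polynomial (quadratic/cubic) in $w$ with coefficients affine in $\theta$. The heart of the matter is invertibility of $D_{w}F(0, w_{0}) : X \to X^{*}$, i.e.\ coercivity of the second variation $D^{2}_{w}\mathcal{I}^{0}_{\rm vK}(w_{0})$ (in the rescaled sense) on $X$. Computing this Hessian, the $v$--$v$ block is $\tfrac{1}{12}\int_{\omega}Q_{2}(\nabla^{2}\psi)$ plus the contribution from differentiating $\tfrac12\nabla v\otimes\nabla v$ twice against $\nabla v_{0} = x$, which produces a term $\int_{\omega}Q_{2}(\grs\xi + \text{sym}(x \otimes \nabla\psi))$-type coupling; the $u$--$u$ block is $\int_{\omega}Q_{2}(\grs\xi)$; the mixed block couples them. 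Coercivity of the $v$-block on $X_{v}$ follows from the lower bound in \eqref{eq:Q2-bounds} restricted to symmetric matrices together with a Poincar\'e-type (or second-Korn / Friedrichs) inequality controlling $\|v\|_{2,2}$ by $\|\nabla^{2}v\|_{2}$ on $X_{v}$; coercivity of the $u$-block on $X_{u}$ is Korn plus Poincar\'e. For the full quadratic form one completes the square in $\xi$ (minimising out the in-plane part exactly as in the passage from $\overline{Q}_{2}$ to $\overline{Q}_{2}^{\star}$ in Section~\ref{subsec:effective-moduli}), reducing to strict positivity of the resulting $v$-only form, which is $\tfrac{1}{12}\int_{\omega}Q_{2}(\nabla^{2}\psi)$ — positive definite on $X_{v}$. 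Hence $D_{w}F(0,w_{0})$ is an isomorphism.

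The implicit function theorem then yields $\varepsilon > 0$ and a unique $C^{1}$ map $\phi : [0,\varepsilon) \to X$ with $\phi(0) = w_{0}$ and $F(\theta, \phi(\theta)) = 0$, i.e.\ $\phi(\theta)$ is the unique \emph{critical point} of $\mathcal{I}^{\theta}_{\rm vK}$ near $w_{0}$. To upgrade ``critical point near $w_{0}$'' to ``global minimiser'' — the remaining gap, and the second main obstacle — I would argue as follows. Since $D^{2}_{w}\mathcal{I}^{0}_{\rm vK}(w_{0})$ is coercive and everything is smooth in $\theta$, by shrinking $\varepsilon$ the second variation $D^{2}_{w}\mathcal{I}^{\theta}_{\rm vK}(\phi(\theta))$ stays uniformly coercive, so $\phi(\theta)$ is a strict local minimiser with a quantitative quadratic lower bound on a fixed-radius ball. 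Separately, any global minimiser $w^{\theta}$ exists (Remark~\ref{rmk:Wh-ass-and-ex}); I would show $w^{\theta} \to w_{0}$ in $X$ as $\theta \to 0$. For this, use $\mathcal{I}^{\theta}_{\rm vK}(w^{\theta}) \le \mathcal{I}^{\theta}_{\rm vK}(w_{0}) = O(\theta)$ to get $\int_{\omega}Q_{2}(\nabla^{2}v^{\theta} - I) \to 0$, hence (by the symmetric lower bound) $\nabla^{2}v^{\theta} \to I$ in $L^{2}$ and $v^{\theta} \to v_{0}$ in $W^{2,2}$; and $\theta\int_{\omega}Q_{2}(\grs u^{\theta} + \tfrac12\nabla v^{\theta}\otimes\nabla v^{\theta}) \le O(\theta)$ forces, after dividing by $\theta$ and using Korn/Poincar\'e together with $\nabla v^{\theta} \otimes \nabla v^{\theta} \to x \otimes x$, that $u^{\theta} \to u_{0}$ in $W^{1,2}$. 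Once $w^{\theta}$ lies in the ball where $\phi(\theta)$ is the unique critical point, $w^{\theta} = \phi(\theta)$. This chain — IFT for uniqueness near the branch, plus a compactness/convergence argument placing all global minimisers near the branch — gives the stated equivalence $w \in \mathop{\rm argmin}\mathcal{I}^{\theta}_{\rm vK} \iff w = \phi(\theta)$ for $\theta \in [0,\varepsilon)$.
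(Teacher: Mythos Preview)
Your implicit-function-theorem step is essentially the paper's: both of you recognise that $D\mathcal{I}^{0}_{\rm vK}$ vanishes in the $u$-direction and remedy this by dividing the in-plane Euler--Lagrange equation by $\theta$ (the paper writes the resulting operator $f$ explicitly and checks that $D_{(u,v)}f(u_0,v_0;0)$ is an isomorphism via a Riesz representation; your sketch of the block structure is the same computation).

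The globality step is where you diverge. The paper does \emph{not} argue via convergence of global minimisers; instead it introduces the shifted/rescaled functional $\tilde{\mathcal{I}}^{\theta}_{\rm vK}(\tilde u,\tilde v) = \mathcal{I}^{\theta}_{\rm vK}(u_0+\tilde u/\theta,\tilde v)$, shows that $D^2\tilde{\mathcal{I}}^{\theta}_{\rm vK}$ is uniformly positive definite on a fixed ball around $(0,v_0)$ for $\theta$ small (note the $\theta^{-1}$ in front of the $\tilde u$-block, which is exactly what makes this work), and then proves by direct energy estimates that any point \emph{outside} that ball has strictly larger energy. Your route --- show global minimisers $w^\theta$ converge to $w_0$ and then invoke the IFT's local uniqueness of critical points --- is a legitimate and arguably more streamlined alternative: once $w^\theta\in W$ (the IFT neighbourhood) and $\theta>0$, $w^\theta$ is a critical point, hence a zero of $f(\cdot,\theta)$, hence equals $\phi(\theta)$.

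One genuine wrinkle to flag: your sentence ``since $D^{2}_{w}\mathcal{I}^{0}_{\rm vK}(w_{0})$ is coercive \ldots\ the second variation $D^{2}_{w}\mathcal{I}^{\theta}_{\rm vK}(\phi(\theta))$ stays uniformly coercive'' is false as written. The actual Hessian of $\mathcal{I}^{\theta}_{\rm vK}$ carries a factor $\theta$ on the $u$--$u$ block, so it degenerates as $\theta\to 0$; there is no $\theta$-uniform quadratic lower bound on a fixed $X$-ball. Fortunately this paragraph is not needed for your argument --- you never use the quadratic lower bound, only the IFT uniqueness of critical points in $W$. Drop that paragraph (or rewrite it for the rescaled operator $f$, where it becomes the isomorphism statement you already have), and your proof goes through. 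For the convergence $u^\theta\to u_0$ you should also spell out that dividing the energy inequality by $\theta$ gives $\int_\omega Q_2(\grs u^\theta+\tfrac12\nabla v^\theta\otimes\nabla v^\theta)\le \int_\omega Q_2(\grs u_0+\tfrac12\nabla v_0\otimes\nabla v_0)$; combined with $\nabla v^\theta\otimes\nabla v^\theta\to x\otimes x$ strongly in $L^2$, weak lower semicontinuity and the uniqueness of $u_0$ as minimiser, this yields first weak and then (by uniform convexity of $Q_2$) strong convergence $u^\theta\to u_0$ in $W^{1,2}$.
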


The proof is a direct consequence of Theorems \ref{th:implicit-function} and \ref{thm:loc-min-are-glob} that are proved in the following two subsections. The main difficulty in obtaining a local branch of minimizers for $\theta \ll 1$ lies in the fact that minimisers at $\theta = 0$ are not unique. Indeed,
\begin{equation}
  \label{eq:structure-minimisers:line-at-v0} (u, v_0) \in \tmop{argmin}
  \mathcal{I}^0_{\rm vK} \text{ for } u \text{ arbitrary},
\end{equation}
as can be readily checked. This is addressed in Subsection~\ref{sec:existence-uniqueness-minimisers-small-theta}. The proof that in fact these minimisers are global is achieved by an application of a Taylor expansion for a carefully perturbed functional in Subsection~\ref{sec:Local-minimisers-are-global}.

\subsection{A branch of solutions for 
\texorpdfstring{$\theta \ll 1$}{θ < < 1}}\label{sec:existence-uniqueness-minimisers-small-theta}

{\begin{notation*}
In this section, the parameter $\theta$ will be explicitly included in the arguments of the functional and differentiation is understood to be with respect to the variables $w = (u, v)$, unless otherwise stated, i.e.
\[ D\mathcal{I}^{\theta}_{\rm vK} (u, v ; \theta) = D_{u, v}
   \mathcal{I}^{\theta}_{\rm vK} (u, v ; \theta) . \]
\end{notation*}}

We are interested in the existence and uniqueness of solutions $w = (u, v)$ to the equation
\[ D\mathcal{I}^{\theta}_{\rm vK} (u, v ; \theta) = 0 \]
as a function of $\theta \in [0, \varepsilon)$ with $\mathcal{I}^{\theta}_{\rm vK}$ given by {\eqref{eq:energy-btI}}. We will in fact prove the existence of a point $(u_0, v_0) \in X$ such that there exists a (locally) unique function $\phi (\theta)$, starting for $\theta = 0$ at $(u_0, v_0)$, such that every $\phi (\theta) \in X$ is a critical point for
$\mathcal{I}^{\theta}_{\rm vK}$. However, lack of uniqueness of minimisers at $\theta = 0$, {\eqref{eq:structure-minimisers:line-at-v0}} will thwart what would be a natural application of the implicit function theorem. The problem manifests itself as a lack of injectivity of the first derivative at $(u, v) \in X$
\begin{align}
  D\mathcal{I}^{\theta}_{\rm vK} (u, v ; \theta)[(\varphi, \psi)] 
  & = \theta \int_{\omega} Q_2 \left[ \grs u +
  \tfrac{1}{2} \nabla v \otimes \nabla v, \grs \varphi + (\nabla v \otimes
  \nabla \psi)_{\tmop{sym}} \right] \nonumber\\
  & \quad + \frac{1}{12}  \int_{\omega} Q_2 [\nabla^2 v - I, \nabla^2
  \psi] .  \label{eq:first-directional-derivative}
\end{align}
which for $\theta = 0$ is
\[ D\mathcal{I}^{\theta}_{\rm vK} (u, v ; 0) [(\varphi, \psi)] = \frac{
   1}{12}  \int_{\omega} Q_2 [\nabla^2 v - I, \nabla^2 \psi], \]
and this vanishes at every $u \in X_u$ and the unique $v (x) = \tfrac{1}{2}  | x |^2 + a \cdot x + b$, 
$a \in \mathbb{R}^2, b \in \mathbb{R}$, such that $( v )_{\omega}  = 0$
and $( \nabla v )_{\omega} = 0$, i.e., $v = v_0$. Because of this the equation
\[ D\mathcal{I}^{\theta}_{\rm vK} (u, v ; \theta) = 0 \text{ in }
   \mathcal{L} (X, \mathbb{R}) \]
cannot be uniquely solvable for $(u, v) \in X$ as a function of $\theta$, even
locally. Nevertheless, after some computations one can see that the problem is the
presence of a leading factor $\theta$ which we can dispense with, because we may apply the implicit
function theorem to the set of equivalent equations

\begin{equation}
\label{eq:unique-minimisers-intro-equivalent-eqs}
( \tfrac{1}{\theta} \partial_u ) \mathcal{I}^{\theta}_{\rm vK} ( u,v; \theta ) = 0,\quad{\partial}_v \mathcal{I}^{\theta}_{\rm vK} (u,v;\theta )=0.
\end{equation}

These equations are equivalent to $D\mathcal{I}^{\theta}_{\rm vK} (u, v
; \theta) = 0$ for any $\theta > 0$ and by an application of the implicit
function theorem around a specific point $(u_0, v_0 ; 0)$ we determine the
existence of a solution function $\phi : \Theta \rightarrow U \times V$ with
$[0, \varepsilon) \subset \Theta, \varepsilon > 0, U \times V \subset X$ open,
$\phi (0) = (u_0, v_0)$ and $\left( \tfrac{1}{\theta} \partial_u, \partial_v
\right) \mathcal{I}^{\theta}_{\rm vK} (\phi (\theta) ; \theta) = 0$.
Then we have $D\mathcal{I}^{\theta}_{\rm vK} (\phi (\theta) ; \theta) =
0$ for $\theta > 0$ because of the equivalence mentioned and
$D\mathcal{I}^{\theta}_{\rm vK} (\phi (0) ; 0) = 0$ by the choice of
$(u_0, v_0)$.

\begin{theorem}
  \label{th:implicit-function} There exists an open set $W$ in $X$, an $\varepsilon > 0$, a point $u_0 \in X_u$ such that $w_0 = (u_0,v_0) \in W$ and a uniquely determined $C^1$ map $\phi : \Theta \rightarrow W$ such that $\phi (0) = w_0$ and
  \[ D\mathcal{I}^{\theta}_{\rm vK} (w ; \theta) = 0 
  \iff w = \phi (\theta) \] 
for all $w \in W$ and $\theta \in [0, \varepsilon)$. 
\end{theorem}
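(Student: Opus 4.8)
The plan is to apply the implicit function theorem in Banach spaces to the system \eqref{eq:unique-minimisers-intro-equivalent-eqs}, which as explained in the text is equivalent to $D\mathcal{I}^{\theta}_{\rm vK}(w;\theta)=0$ for $\theta>0$ and, by the choice of $w_0$, also encodes the correct equation at $\theta=0$. Concretely, I would define the map
\[
\Phi : X \times \mathbb{R} \to X_u^{*} \times X_v^{*}, \qquad
\Phi(u,v;\theta) := \Big( \big(\tfrac{1}{\theta}\partial_u\big)\mathcal{I}^{\theta}_{\rm vK}(u,v;\theta),\ \partial_v \mathcal{I}^{\theta}_{\rm vK}(u,v;\theta)\Big),
\]
and observe that, because the $\theta$-dependent part of the energy is the polynomial $\tfrac{\theta}{2}\int_\omega Q_2(\grs u + \tfrac12\nabla v\otimes\nabla v)$, the factor $\tfrac1\theta$ cancels exactly and $\Phi$ extends to a $C^1$ (indeed real-analytic) map on all of $X\times\mathbb{R}$, in particular across $\theta=0$. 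Using \eqref{eq:first-directional-derivative} one computes that at $\theta=0$ the first component is $\partial_u\mathcal{I}^{\theta}_{\rm vK}(u,v;0)/\,$(formal)$=\int_\omega Q_2[\grs u + \tfrac12\nabla v\otimes\nabla v,\ \grs\varphi]$ while the second is $\tfrac{1}{12}\int_\omega Q_2[\nabla^2 v - I,\nabla^2\psi]$; hence $\Phi(u,v;0)=0$ forces $v=v_0$ (the unique element of $X_v$ with $\nabla^2 v_0 = I$) and $\grs u = -\tfrac12\nabla v_0\otimes\nabla v_0$, which by Korn's inequality on $X_u$ determines a unique $u_0\in X_u$. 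This identifies the base point $w_0=(u_0,v_0)$.

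Next I would show that the partial differential $D_w\Phi(w_0;0) : X \to X_u^{*}\times X_v^{*}$ is a Banach space isomorphism. It is block-triangular: differentiating the second component in $(u,v)$ at $\theta=0$ gives, in the $v$-direction, the map $\psi\mapsto \tfrac{1}{12}\int_\omega Q_2[\nabla^2\psi,\nabla^2\psi']$, which is coercive on $X_v$ because $Q_2$ is positive definite on symmetric matrices (by \eqref{eq:Q2-bounds}) and, on $X_v$, $\|\nabla^2 v\|_{L^2}$ is an equivalent norm (Poincaré applied twice, using the vanishing averages of $v$ and $\nabla v$); the $u$-derivative of the second component vanishes at $\theta=0$. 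Differentiating the first component in the $u$-direction gives $\varphi\mapsto \int_\omega Q_2[\grs\varphi,\grs\varphi']$, coercive on $X_u$ by Korn's inequality together with the vanishing averages built into $X_u$. Thus $D_w\Phi(w_0;0)$ is upper block-triangular with two boundedly invertible diagonal blocks (each invertible by Lax–Milgram), hence an isomorphism. The implicit function theorem then yields $\varepsilon>0$, an open neighbourhood $W\subset X$ of $w_0$, and a unique $C^1$ curve $\phi:[0,\varepsilon)\to W$ with $\phi(0)=w_0$ and $\Phi(\phi(\theta);\theta)=0$, and shrinking $W$ if necessary, $\phi(\theta)$ is the \emph{only} zero of $\Phi(\cdot;\theta)$ in $W$. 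Finally, for $\theta\in(0,\varepsilon)$ the equivalence of \eqref{eq:unique-minimisers-intro-equivalent-eqs} with $D\mathcal{I}^{\theta}_{\rm vK}(w;\theta)=0$ gives the claim for $\theta>0$, and at $\theta=0$ the choice of $w_0$ gives $D\mathcal{I}^{0}_{\rm vK}(w_0;0)=0$, with uniqueness in $W$ following since $W$ was chosen inside the IFT neighbourhood.

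The main obstacle is precisely the degeneracy flagged in the text: at $\theta=0$ the genuine derivative $D\mathcal{I}^{\theta}_{\rm vK}(\cdot;0)$ is not injective (it annihilates all of $X_u$), so a naive IFT fails, and the whole point is to recognise that dividing the $u$-equation by $\theta$ removes a spurious simple zero and produces a map that is still $C^1$ up to and including $\theta=0$ while having an invertible linearisation there. Verifying this regularity of the rescaled map and the invertibility of the linearised operator — in particular checking that the off-diagonal block $D_u\partial_v\mathcal{I}^{\theta}_{\rm vK}$ vanishes at $\theta=0$ so that the block-triangular structure really holds — is the technical heart of the argument; once that is in place, the coercivity of the diagonal blocks is the routine Korn/Poincaré input already available from \eqref{eq:Q2-bounds} and the definition of $X$.
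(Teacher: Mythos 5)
Your strategy is essentially the same as the paper's: eliminate the degenerate leading factor of $\theta$ by working with $(\tfrac{1}{\theta}\partial_u,\partial_v)\mathcal{I}^{\theta}_{\rm vK}$, identify the base point $(u_0,v_0)$ as the zero of this system at $\theta=0$, check invertibility of the linearisation there, and apply the implicit function theorem. Your invertibility argument — block upper-triangular structure with vanishing $(2,1)$-block plus Lax–Milgram for the two diagonal blocks — is a transparent reformulation of what the paper does: the paper writes the Riesz representative of the linearisation with respect to the $Q_2$-induced inner product $\langle\cdot,\cdot\rangle_X$ and observes that the resulting map $(\varphi_2,\psi_2)\mapsto(\varphi_2+\tilde\pi((\nabla v_0\otimes\nabla\psi_2)_{\rm sym}),\tfrac{1}{12}\psi_2)$ is an isomorphism, but that map is itself block upper-triangular with invertible diagonal, so this is the same observation.

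There is, however, a concrete error in your identification of $u_0$. The equation
$\int_\omega Q_2[\grs u + \tfrac{1}{2}\nabla v_0\otimes\nabla v_0,\ \grs\varphi]=0$ for all $\varphi\in X_u$
does \emph{not} force the pointwise equality $\grs u = -\tfrac{1}{2}\nabla v_0\otimes\nabla v_0$; it only forces $\grs u + \tfrac{1}{2}\nabla v_0\otimes\nabla v_0$ to be $Q_2$-orthogonal to the subspace of symmetrised gradients. And in fact the pointwise equation has no solution at all: with $v_0(x)=\tfrac{1}{2}|x|^2-c_0$ one has $\tfrac{1}{2}\nabla v_0\otimes\nabla v_0 = \tfrac{1}{2}\,x\otimes x$, which violates the Saint-Venant compatibility condition $\partial_{11}E_{22}+\partial_{22}E_{11}-2\partial_{12}E_{12}=0$ (it equals $-1$ identically), so $-\tfrac{1}{2}\nabla v_0\otimes\nabla v_0$ is not a symmetrised gradient. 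The correct statement, as in the paper, is $\grs u_0 = -\pi\bigl(\tfrac{1}{2}\nabla v_0\otimes\nabla v_0\bigr)$, where $\pi$ is the $Q_2$-orthogonal projection of $L^2(\omega;\mathbb{R}^{2\times 2}_{\rm sym})$ onto $\{\grs\varphi:\varphi\in X_u\}$; Korn–Poincaré then yields the unique $u_0\in X_u$. This is a local misstatement rather than a structural flaw: your block-triangular linearisation at $(w_0;0)$ depends only on $v_0$ and on setting $\theta=0$, not on the particular value of $u_0$, so once the characterisation of $u_0$ is fixed the remainder of your argument goes through unchanged.
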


\begin{proof}
  We first define a new set of equations to solve, then show that the second
  derivative of $\mathcal{I}^{\theta}_{\rm vK}$ is one to one and then
  the conclusion is exactly that of the implicit function theorem.
  For brevity we write
  \[ \langle F, G \rangle \assign \int_{\omega} Q_2 [F, G] \text{ and }
     \langle F \rangle \assign \langle F, F \rangle = \int_{\omega} Q_2 (F) .
  \]
  These define a scalar product and a norm in $L^2 (\omega ; \mathbb{R}^{2
  \times 2}_{\tmop{sym}})$ since $Q_2$ is by construction bilinear and symmetric
  and it is positive definite on this space. Even though $Q_2$ vanishes on
  antisymmetric matrices, during the proof we keep track of symmetrised arguments
  to these functions for the sake of clarity.
  
  {\step{Equivalent equations.}{From the computations leading to
  {\eqref{eq:first-directional-derivative}} we have:
  \[ \left( \tfrac{1}{\theta} \partial_u \right) \mathcal{I}^{\theta}_{\rm vK} 
  (u, v ; \theta) [\varphi] = \langle \grs u + \tfrac{1}{2}
     \nabla v \otimes \nabla v, \grs \varphi \rangle, \]
  and
  \begin{align*}
    \partial_v \mathcal{I}^{\theta}_{\rm vK} (u, v ; \theta) [\psi] 
    & = \theta \langle \grs u + \tfrac{1}{2} \nabla v \otimes \nabla v, (\nabla
    v \otimes \nabla \psi)_{\tmop{sym}} \rangle\\
    & \applicationspace{1 \tmop{em}} + \tfrac{1}{12}  \langle \nabla^2 v -
    I, \nabla^2 \psi \rangle 
  \end{align*}
  for all $(\varphi,\psi) \in X$. We observe first that, because 
  $\left( \frac{1}{\theta} \partial_u \right)
  \mathcal{I}^{\theta}_{\rm vK}$ is independent of $\theta$ the right
  hand side makes sense even if $\theta = 0$. Now, on the one hand, for any
  fixed value of $\theta \geqslant 0$ solving the system
  \[ \left\{\begin{array}{rlll}
       \left( \tfrac{1}{\theta} \partial_u \right) \mathcal{I}^{\theta}_{\rm vK} 
       (u, v ; \theta) & = & 0, & \text{in } \mathcal{L} (X_u,
       \mathbb{R}),\\
       \partial_v \mathcal{I}^{\theta}_{\rm vK} (u, v ; \theta) & = & 0,
       & \text{in } \mathcal{L} (X_v, \mathbb{R}),
     \end{array}\right. \]
  implies solving:
  \begin{equation}
    \label{eq:th:implicit-function:equiv-eqs} f (u, v ; \theta) [\varphi,
    \psi] = 0 \text{ for every } (\varphi, \psi) \in X,
  \end{equation}
  where $f : X \times \mathbb{R} \rightarrow \mathcal{L} (X, \mathbb{R})$ is
  given by
  \begin{align*}
    f (u, v ; \theta) [\varphi, \psi] 
    & = \langle \grs u + \tfrac{1}{2}
    \nabla v \otimes \nabla v, \grs \varphi \rangle\\
    &  \quad + \theta \langle \grs u + \tfrac{1}{2} \nabla v \otimes \nabla
    v, (\nabla v \otimes \nabla \psi)_{\tmop{sym}} \rangle\\
    &  \quad + \tfrac{1}{12}  \langle \nabla^2 v - I, \nabla^2 \psi \rangle.
  \end{align*}
  On the other hand, solving $f (u, v ; \theta) = 0$ for $\theta > 0$ is
  equivalent to solving the original problem $D\mathcal{I}^{\theta}_{\rm vK} 
  (u, v ; \theta) = 0$ as we desired.}}
  
  {\step{A zero and the derivative of $f$.}{Since we are interested in the
  behaviour around $\theta = 0$, we evaluate here and obtain
  \[ f (u, v ; 0) [\varphi, \psi] = \langle \grs u + \tfrac{1}{2} \nabla v
     \otimes \nabla v, \grs \varphi \rangle + \tfrac{1}{12}  \langle \nabla^2
     v - I, \nabla^2 \psi \rangle . \]
  We can compute a zero of $f (\cdot, \cdot ; 0)$ by first considering the
  last term, which vanishes for all $\psi \in X_v$ if and only if $v = v_0$. 
  We next observe that the first
  term encodes the orthogonality of $\grs u + \tfrac{1}{2} \nabla v_0 \otimes
  \nabla v_0$ to the space of symmetrised gradients $\tmop{SG}_u \assign \left\{
  \grs \varphi : \varphi \in X_u \right\}$ with respect to the scalar product induced by
  $Q_2$. The $u \in X_u$ realizing this is attained by projecting onto
  $\tmop{SG}_u$, i.e.
  \[ \grs u_0 = - \pi \left( \tfrac{1}{2} \nabla v_0 \otimes \nabla v_0
     \right), \]
  where $\pi : L^2 (\omega ; \mathbb{R}^{2 \times
  2}_{\tmop{sym}}) \rightarrow L^2 (\omega ; \mathbb{R}^{2 \times
  2}_{\tmop{sym}})$ is the orthogonal projection onto $\tmop{SG}_u$ given by
  \[ \pi (B) \assign \underset{A \in \tmop{SG}_u}{\tmop{argmin}} 
     \int_{\omega} Q_2 (B - A) = \underset{A \in \tmop{SG}_u}{\tmop{argmin}} 
     \langle B - A \rangle_{Q_2}. \]
  By the Korn-Poincar{\'e} inequality this determines $u_0 \in X_u$ uniquely. 
  We have then a point $w_0 = (u_0, v_0)$ such that
  \[ f (u_0, v_0 ; 0) = 0 \text{ in } \mathcal{L} (X, \mathbb{R}) . \]
  Finally, we compute $\frac{\mathd}{\mathd \varepsilon} |_{\varepsilon = 0} f
  (u_0 + \varepsilon \varphi_2, v_0 + \varepsilon \psi_2 ; 0) [\varphi_1,
  \psi_1]$ to have the derivative of $f$:
  \begin{align*}
    F (\varphi_2, \psi_2) [\varphi_1, \psi_1] 
    \assign& D_{u, v} f (u_0, v_0
    ; 0) [(\varphi_1, \psi_1), (\varphi_2, \psi_2)]\\
    =& \left\langle \grs \varphi_2, \grs \varphi_1 \right\rangle +
    \left\langle (\nabla v_0 \otimes \nabla
    \psi_2)_{\tmop{sym}}, \grs \varphi_1 \right\rangle\\
    &~+ \tfrac{1}{12}  \langle \nabla^2 \psi_2, \nabla^2 \psi_1
    \rangle.
  \end{align*}}}
  
  {\step{The map $F : X \rightarrow \mathcal{L} (X, \mathbb{R})$ is an
  isomorphism.}{Note first that the map
  \[ \langle (u, v), (\tilde{u}, \tilde{v}) \rangle_X \assign \left\langle
     \grs u, \grs  \tilde{u} \right\rangle + \langle \nabla^2 v, \nabla^2 
     \tilde{v} \rangle \]
  defines a scalar product in $X$, with positive-definiteness following from
  Korn-Poincar{\'e}'s and Poincar{\'e}'s
  inequality. Then we can write $F$ as
  \begin{align*}
    F (\varphi_2, \psi_2) [\varphi_1, \psi_1] 
    & = \langle (\varphi_1, \psi_1), (\varphi_2 + \tilde{\pi} ((\nabla v_0
    \otimes \nabla \psi_2)_{\tmop{sym}}), \tfrac{1}{12} \psi_2) \rangle_X,
  \end{align*}
  where we defined $\tilde{\pi} \assign \grs^{- 1} \circ \pi$, a continuous
  map from $L^2 (\omega ; \mathbb{R}^{2 \times 2}_{\tmop{sym}})$ to $X_u$.
  The Riesz representation for $F (\varphi_2, \psi_2)$ in $\mathcal{L} (X, \mathbb{R})$
  is then $(\varphi_2 + \tilde{\pi} ((\nabla v_0 \otimes \nabla
  \psi_2)_{\tmop{sym}}), \tfrac{1}{12} \psi_2)$ and the map
  \[ (\varphi_2, \psi_2) \mapsto (\varphi_2 + \tilde{\pi} ((\nabla v_0 \otimes
     \nabla \psi_2)_{\tmop{sym}}), \tfrac{1}{12} \psi_2) \]
  is clearly an isomorphism in $X$, with continuity for $\psi_2 \mapsto
  \tilde{\pi} ((\nabla v_0 \otimes \nabla \psi_2)_{\tmop{sym}})$ following from the
  continuity of $\tilde{\pi}$ and the Sobolev embedding $W^{1, 2}
  \hookrightarrow L^4$.}}
\end{proof}

\subsection{Uniqueness and globality of minimisers}\label{sec:Local-minimisers-are-global}

In addition to the previous local result, we can prove that the critical points found in the previous subsection are the unique global minimizers for small non zero values of the parameter $\theta$. We do this in two steps: close to the origin $(u_0, v_0)$ of the branch of solutions, we would like to perform a Taylor expansion and use that the second differential at $(u_0, v_0)$ is ``almost'' positive definite.

The key idea is to slightly modify the energy by a shift and a rescaling in
order to obtain derivatives as those appearing in the equivalent equations
{\eqref{eq:th:implicit-function:equiv-eqs}} of Theorem
\ref{th:implicit-function}, thus obtaining a positive definite second
derivative. We set
\[ \tilde{\mathcal{I}}^{\theta}_{\rm vK} (\tilde{u}, \tilde{v}) \assign
   \mathcal{I}^{\theta}_{\rm vK} \left( u_0 + \tfrac{\tilde{u}}{\theta},
   \tilde{v} \right) \]
and then $(\tilde{u}_{\theta}, \tilde{v}_{\theta})$ is a minimiser of
$\tilde{\mathcal{I}}^{\theta}_{\rm vK}$ if and only if $(u_0 +
\tilde{u}_{\theta} / \theta, \tilde{v}_{\theta})$ is a minimiser of
$\mathcal{I}^{\theta}_{\rm vK}$. In other words, if $(u_{\theta},
v_{\theta})$ is a minimiser of $\mathcal{I}^{\theta}_{\rm vK}$, then
$\tilde{u}_{\theta} = \theta (u_{\theta} - u_0)$ and $\tilde{v}_{\theta} =
v_{\theta}$ minimise $\tilde{\mathcal{I}}^{\theta}_{\rm vK}$.

We name $\tilde{w}_0$ the point around which we investigate the modified
functional:
\begin{equation}
  \label{eq:minimizer-rescaled-shifted-I} \tilde{w}_0 \assign (\tilde{u}_0,
  \tilde{v}_0) = (0, v_0).
\end{equation}
\begin{theorem}
  \label{thm:loc-min-are-glob} There exists $\theta_c > 0$ and a neighborhood 
  $\tilde{W} \subset X$ with $\tilde{w}_0 \in \tilde{W}$ such that for every
  $\theta \in (0, \theta_c)$, every critical point of 
  $D\tilde{\mathcal{I}}^{\theta}_{\rm vK}$ is the unique global minimiser of 
  $\tilde{\mathcal{I}}^{\theta}_{\rm vK}$. 
\end{theorem}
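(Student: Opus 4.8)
The plan is to work with the rescaled, shifted functional $\tilde{\mathcal{I}}^{\theta}_{\rm vK}$, for which minimisers are in bijection with those of $\mathcal{I}^{\theta}_{\rm vK}$, so it suffices to analyse it near $\tilde{w}_0 = (0, v_0)$. Abbreviating $\langle F, H \rangle = \int_{\omega} Q_2[F, H]$, $\langle F \rangle = \langle F, F \rangle$ and $G = G(\tilde{v}) \assign \grs u_0 + \tfrac12 \nabla \tilde{v} \otimes \nabla \tilde{v}$, an elementary expansion of $\tfrac{\theta}{2}\langle G + \tfrac1\theta \grs \tilde{u} \rangle + \tfrac{1}{24}\langle \nabla^2 \tilde{v} - I \rangle$ gives
\[
  \tilde{\mathcal{I}}^{\theta}_{\rm vK}(\tilde{u}, \tilde{v})
  = \frac{1}{2\theta} \langle \grs \tilde{u} \rangle + \langle G, \grs \tilde{u} \rangle + \frac{\theta}{2} \langle G \rangle + \frac{1}{24} \langle \nabla^2 \tilde{v} - I \rangle .
\]
The point of the rescaling is that the leading $\tfrac{1}{2\theta}$-term makes the Hessian in the $\tilde{u}$-directions blow up and thereby dominate the indefinite cross terms, in the same spirit as the leading factor in the equivalent equations \eqref{eq:th:implicit-function:equiv-eqs}.

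\emph{Step 1: uniform strict convexity near $\tilde{w}_0$.} Since $\nabla v_0(x) = x$ and $\nabla^2 v_0 = I$, one computes
\[
  D^2 \tilde{\mathcal{I}}^{\theta}_{\rm vK}(\tilde{w}_0)[(\varphi,\psi),(\varphi,\psi)]
  = \tfrac{1}{\theta}\langle \grs \varphi \rangle + 2\langle (\nabla v_0 \otimes \nabla \psi)_{\tmop{sym}}, \grs \varphi \rangle + \tfrac{1}{12}\langle \nabla^2 \psi \rangle + \theta\, R(\psi),
\]
with $|R(\psi)| \le C \|\psi\|_{2,2}^2$ coming from $\tfrac{\theta}{2}\langle G\rangle$ (here $\|\nabla v_0\|_{L^\infty}<\infty$ and $\|\grs u_0\|_{L^2}<\infty$). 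Estimating the cross term by Young's inequality with weight $\tfrac{1}{2\theta}$ and $\langle(\nabla v_0 \otimes \nabla\psi)_{\tmop{sym}}\rangle \le C\|\psi\|_{2,2}^2$, then invoking the coercivity bounds $\langle\grs\varphi\rangle \ge c\|\varphi\|_{X_u}^2$ and $\langle\nabla^2\psi\rangle \ge c\|\psi\|_{X_v}^2$ (Korn--Poincar\'e, Poincar\'e and \eqref{eq:Q2-bounds}), one obtains $D^2\tilde{\mathcal{I}}^{\theta}_{\rm vK}(\tilde{w}_0) \ge \mu\,\|\cdot\|_X^2$ with $\mu>0$ \emph{independent of} $\theta$, for all $\theta\in(0,\theta_0)$. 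As $\tilde{\mathcal{I}}^{\theta}_{\rm vK}$ is smooth and $D^2\tilde{\mathcal{I}}^{\theta}_{\rm vK}(\cdot)$ is Lipschitz on bounded sets with a constant uniform in $\theta\le 1$ — the $\tfrac{1}{2\theta}$-term being quadratic in $\tilde{u}$, hence of $w$-independent second derivative, so it contributes nothing to the variation of $D^2$ — this positive definiteness persists on a fixed ball $\tilde{W} = B_r(\tilde{w}_0)$, uniformly for $\theta\in(0,\theta_c)$ with $\theta_c\le\theta_0$ small. Thus $\tilde{\mathcal{I}}^{\theta}_{\rm vK}$ is strictly convex on $\tilde{W}$, so it has at most one critical point there, which is a strict local minimiser.

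\emph{Step 2: every global minimiser lies in $\tilde{W}$.} Completing the square in $\grs\tilde{u}$ via the orthogonal projection $\pi$ onto $\tmop{SG}_u$ (so $\langle G,\grs\tilde{u}\rangle = \langle\pi G,\grs\tilde{u}\rangle$ and $\pi G = \grs u_0 + \pi(\tfrac12\nabla\tilde{v}\otimes\nabla\tilde{v})$) yields
\[
  \tilde{\mathcal{I}}^{\theta}_{\rm vK}(\tilde{u},\tilde{v})
  = \frac{1}{2\theta}\langle \grs\tilde{u} + \theta\pi G \rangle + \frac{\theta}{2}\langle (\mathrm{Id}-\pi)G \rangle + \frac{1}{24}\langle \nabla^2\tilde{v} - I \rangle,
\]
a sum of three nonnegative terms. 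Since $\grs u_0 = -\pi(\tfrac12\nabla v_0\otimes\nabla v_0)$ and $\nabla^2 v_0 = I$, its value at $\tilde{w}_0$ is $\tfrac{\theta}{2}\langle(\mathrm{Id}-\pi)(\tfrac12\nabla v_0\otimes\nabla v_0)\rangle = O(\theta)$. A global minimiser $(\tilde{u}_\theta,\tilde{v}_\theta)$ — which exists, being the image of a minimiser of $\mathcal{I}^{\theta}_{\rm vK}$, cf.\ Remark~\ref{rmk:Wh-ass-and-ex} — therefore has each of the three terms $\le O(\theta)$: the third gives $\|\nabla^2\tilde{v}_\theta - I\|_{L^2}\to 0$, hence $\tilde{v}_\theta\to v_0$ in $W^{2,2}$ (zero means in $X_v$, Poincar\'e) and $\|\nabla\tilde{v}_\theta\|_{L^4}$ bounded (Sobolev), so $\|\pi G\|$ stays bounded; the first then gives $\|\grs\tilde{u}_\theta\|_{L^2}\le\theta\|\pi G\| + O(\theta)\to 0$, so $\tilde{u}_\theta\to 0$ in $X_u$ (Korn--Poincar\'e). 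Hence $(\tilde{u}_\theta,\tilde{v}_\theta)\to\tilde{w}_0$ in $X$, so $(\tilde{u}_\theta,\tilde{v}_\theta)\in\tilde{W}$ once $\theta$ is small enough.

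\emph{Conclusion and main obstacle.} Shrinking $\theta_c$ once more, for $\theta\in(0,\theta_c)$ every global minimiser of $\tilde{\mathcal{I}}^{\theta}_{\rm vK}$ lies in $\tilde{W}$, where by Step~1 the functional is strictly convex; hence there is exactly one critical point in $\tilde{W}$, it is the unique local minimiser, and it coincides with the global minimiser, which is therefore unique. Undoing the rescaling and combining with Theorem~\ref{th:implicit-function} (whose branch $\phi(\theta)$, suitably transformed, tends to $\tilde{w}_0$ as $\theta\to 0$ and so is the critical point just found) gives Theorem~\ref{th:strucure-min-small-theta}. I expect the main obstacle to be Step~1: ensuring both that the coercivity constant $\mu$ does not degenerate as $\theta\to 0$ and that $\tilde{W}$ can be chosen $\theta$-independent — this is precisely what the weighted Young inequality and the observation that the $\tfrac{1}{\theta}$-term is $w$-independent are for; Step~2 is then a short a priori estimate once the completed-square form is available.
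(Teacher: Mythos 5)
Your Step 1 is essentially the paper's Step 1 (local positive definiteness of $D^2\tilde{\mathcal{I}}^{\theta}_{\rm vK}$ near $\tilde{w}_0$, uniformly in small $\theta$), and both hinge on the same mechanism: the $\theta^{-1}\langle\grs\varphi\rangle$ block dominates the indefinite cross term via a weighted Young inequality. Your Step 2, however, replaces the paper's Steps 2--3 by a genuinely different argument. The paper fixes a critical point $\tilde w_\theta$ in $B_{\eta/3}(\tilde w_0)$, shows it is a strict local minimiser on $B_{2\eta/3}(\tilde w_\theta)$ by Taylor plus Step 1, and then excludes lower energy far away by splitting into the two cases $\|\tilde v - v_0\|_{2,2} \gtrless \eta/6$ and estimating the energy against $\tilde{\mathcal{I}}^{\theta}_{\rm vK}(\tilde w_0) = O(\theta)$. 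You instead exploit the orthogonal decomposition $G + \tfrac1\theta\grs\tilde u = (\pi G + \tfrac1\theta\grs\tilde u) + (\mathrm{Id}-\pi)G$ to write the energy as a sum of three nonnegative squares, observe that each term at a global minimiser is $O(\theta)$, and deduce $\tilde v_\theta \to v_0$ in $W^{2,2}$ and $\tilde u_\theta \to 0$ in $W^{1,2}$ by Poincar\'e and Korn--Poincar\'e. Once all global minimisers are known to sit in the fixed convexity ball, uniqueness and the identification of any critical point in $\tilde W$ with the global minimiser follow immediately. Your route is clean: it avoids the two-case energy comparison and localizes minimisers a priori rather than arguing around a given critical point. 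It does rely explicitly on the existence of a global minimiser (which you correctly pull from Remark~\ref{rmk:Wh-ass-and-ex} via the bijection between minimisers of $\tilde{\mathcal I}^{\theta}_{\rm vK}$ and $\mathcal I^{\theta}_{\rm vK}$), whereas the paper's argument produces global minimality of the given critical point directly. Both are correct; yours substitutes the completed-square/Hilbert-space orthogonality for the paper's case analysis, at the modest cost of invoking the existence result up front. One small point of care, which you do address: the a priori bound on $\|\pi G\|$ needs $\|\nabla\tilde v_\theta\|_{L^4}$ bounded, which you get from the already-established $\tilde v_\theta \to v_0$ before estimating $\grs\tilde u_\theta$, so the order of the two deductions in Step 2 matters and you have it right.
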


\begin{proof}
  We proceed in three steps. First we prove that there is some $\theta_c > 0$
  such that $D^2 \tilde{\mathcal{I}}^{\theta}_{\rm vK} (\tilde{w})$ is positive
  definite for all $\theta \in (0, \theta_c)$ if $\| \tilde{w} - \tilde{w}_0 \| <
  \eta$ for some suitable $\eta > 0$ and $\tilde{w}_0 = (0, v_0)$ as defined
  in {\eqref{eq:minimizer-rescaled-shifted-I}}. Then we use this to determine
  a neighbourhood of $\tilde{w}_0$ where (local) minimisers of
  $\tilde{\mathcal{I}}^{\theta}_{\rm vK}$ will be global by first
  considering points close to one such minimiser and finally those far away.
  We will need the first two derivatives of $\tilde{\mathcal{I}}^{\theta}_{\rm vK}$.
  
  For the first differential we apply the chain rule to obtain $D_u \nospace
  \tilde{\mathcal{I}}^{\theta}_{\rm vK} (\tilde{u}, \tilde{v}) = \frac{1}{\theta} D_u
  \nospace \mathcal{I}^{\theta}_{\rm vK} \left( u_0 + \frac{\tilde{u}}{\theta},
  \tilde{v} \right)$ and substitute:
  \begin{align*}
    D \nospace \tilde{\mathcal{I}}^{\theta}_{\rm vK} (\tilde{u}, \tilde{v}) [\varphi,
    \psi] & = \langle \grs u_0 + \tfrac{1}{\theta}  \grs \tilde{u} + \tfrac{1}{2}
    \nabla \tilde{v} \otimes \nabla \tilde{v}, \grs \varphi \rangle\\
    &\quad + \theta \langle \grs u_0 + \tfrac{1}{\theta}  \grs \tilde{u} + \tfrac{1}{2}
    \nabla \tilde{v} \otimes \nabla \tilde{v}, (\nabla \tilde{v} \otimes \nabla \psi)_{\tmop{sym}}
    \rangle\\
    &\quad + \tfrac{1}{12}  \langle \nabla^2 \tilde{v} - I, \nabla^2 \psi \rangle .
  \end{align*}
  
  For the second differential we can compute another directional derivative:
  \begin{align}
  &\frac{\mathd}{\mathd \varepsilon} |_{\varepsilon = 0} D \nospace
  \tilde{\mathcal{I}}^{\theta}_{\rm vK} (\tilde{u} + \varepsilon \varphi_2, \tilde{v} +
  \varepsilon \psi_2) [\varphi_1, \psi_1] \nonumber\\ 
  &=~~ \langle \tfrac{1}{\theta} 
  \grs \varphi_2 + (\nabla \tilde{v} \otimes \nabla \psi_2)_{\tmop{sym}}, \grs
  \varphi_1 \rangle \nonumber\\
  &~~\quad + \left\langle \grs \varphi_2 + \theta (\nabla \tilde{v} \otimes \nabla
  \psi_2)_{\tmop{sym}}, \right. \nobracket (\nabla \tilde{v} \otimes \nabla
  \psi_1)_{\tmop{sym}} \rangle \nonumber\\
  &~~\quad + \theta \langle \grs u_0 + \tfrac{1}{\theta}  \grs \tilde{u} + \tfrac{1}{2}
  \nabla \tilde{v} \otimes \nabla \tilde{v}, (\nabla \psi_2 \otimes \nabla
  \psi_1)_{\tmop{sym}} \rangle \nonumber\\
  &~~\quad + \tfrac{1}{12}  \langle \nabla^2 \psi_2, \nabla^2 \psi_1 \rangle . 
  \label{eq:local-min-are-glob-second-deriv}
\end{align}
  
  {\step{Local positive definiteness.}
  {We show there exist $\eta > 0$ and $\theta_c > 0$ s.t. $D^2
  \tilde{\mathcal{I}}^{\theta}_{\rm vK} (\tilde{w})$ is positive definite for
  all $\theta < \theta_c$ and all $\| \tilde{w} - \tilde{w}_0 \|_X < \eta$. More 
  precisely, we even show that there exists some $\bar{c} > 0$ such that 
  \begin{align}\label{eq:HessIvK-posdef} 
  D^2 \tilde{\mathcal{I}}^{\theta}_{\rm vK} (\tilde{w}) [(\varphi, \psi),
     (\varphi, \psi)] \geqslant \bar{c} \| (\varphi, \psi) \|_X^2 
  \end{align} 
  for all $\theta < \theta_c$, $\| \tilde{w} - \tilde{w}_0 \|_X \le \eta$ and 
  $(\varphi, \psi) \in X$. 

  Let then $\eta > 0$ be
  fixed and to be determined later and let $\tilde{w} = (\tilde{u}, \tilde{v}) \in X$ with $\| \tilde{w} -
  \tilde{w}_0 \|_X < \eta$. We start by \ bringing terms together in
  {\eqref{eq:local-min-are-glob-second-deriv}}:
  \begin{align*}
    &D^2 \tilde{\mathcal{I}}^{\theta}_{\rm vK} (\tilde{w})
    [(\varphi, \psi), (\varphi, \psi)] 
    & \\ 
    &~~= \tfrac{1}{\theta}  \left\langle \grs \varphi + \theta
    (\nabla \tilde{v} \otimes \nabla \psi)_{\tmop{sym}} \right\rangle 
    &(a)\\
    &~~\quad + \theta \langle \grs u_0 + \tfrac{1}{\theta}  \grs \tilde{u} + \tfrac{1}{2}
    \nabla \tilde{v} \otimes \nabla \tilde{v}, (\nabla \psi \otimes \nabla \psi)_{\tmop{sym}}
    \rangle 
    &(b)\\
    &~~\quad + \tfrac{1}{12}  \langle \nabla^2 \psi \rangle. 
    &(c)
  \end{align*}
  Given $f, g \in W^{1, 2} (\omega ; \mathbb{R}^2)$ we have, by the bounds
  {\eqref{eq:Q2-bounds}} for $Q_2$ and H{\"o}lder (with the Sobolev embedding
  $W^{1, 2} (\omega ; \mathbb{R}^2) \hookrightarrow L^4 (\omega ;
  \mathbb{R}^2)$):
  \[ \langle (f \otimes g)_{\tmop{sym}} \rangle \lesssim \int_{\omega} | f
     \otimes g |^2 = \int_{\omega} | f |^2  | g |^2 \hspace{0.2em} \leqslant
     \| f \|_{0, 4}^2  \| g \|_{0, 4}^2 \lesssim \| f \|_{1, 2}^2  \| g \|_{1,
     2}^2 . \]
  Using this, the first and last term above can be estimated using
  Korn-Poincar{\'e} and Poincar{\'e}'s inequality:
  \begin{align*}
    (a) & \geqslant \frac{1}{2 \theta}  \left\langle \grs \varphi
    \right\rangle - \theta \langle (\nabla \tilde{v} \otimes \nabla \psi)_{\tmop{sym}}
    \rangle\\
    & \geqslant \frac{c}{2 \theta}  \left\| \grs \varphi \right\|_{0, 2}^2
    - C \theta \| \nabla \tilde{v} \otimes \nabla \psi \|_{0, 2}^2\\
    & \geqslant \frac{c_1}{\theta}  \| \varphi \|_{1, 2}^2 - \tilde{C}_1
    \theta \| \tilde{v} \|_{2, 2}^2  \| \psi \|_{2, 2}^2\\
    & \geqslant c_1 \theta^{- 1} \| \varphi \|_{1, 2}^2 - C_1 \theta \|
    \psi \|_{2, 2}^2
  \end{align*}
  for constants $c_1, C_1, \tilde{C}_1 > 0$, where in the last step we used the assumption 
  $\| \tilde{v} - v_0 \|_{2, 2} < \eta$
  to bound $\| \tilde{v} \|_{2, 2}^2$ by some constant independent of $\eta \leqslant
  1$. For the second term, use Cauchy-Schwarz for $Q_2$, and the same ideas as
  above:
  \begin{align*}
    (b) & \gtrsim - \theta \left\| \grs u_0 + \tfrac{1}{\theta}  \grs \tilde{u} +
    \tfrac{1}{2} \nabla \tilde{v} \otimes \nabla \tilde{v} \right\|_{0, 2}  \| (\nabla \psi
    \otimes \nabla \psi)_{\tmop{sym}} \|_{0, 2}\\
    & \gtrsim - \left[ \theta \left( \left\| \grs u_0 \right\|_{0, 2} + \|
    \nabla \tilde{v} \otimes \nabla \tilde{v} \|_{0, 2} \right) + \left\| \grs \tilde{u} \right\|_{0,
    2} \right]  \| (\nabla \psi \otimes \nabla \psi)_{\tmop{sym}} \|_{0, 2}\\
    & \gtrsim - [\theta (\| u_0 \|_{1, 2} + \| \nabla \tilde{v} \|_{0, 4}^2) + \| \tilde{u}
    \|_{1, 2}]  \| \nabla \psi \|_{0, 4}^2\\
    & \gtrsim - [\theta (\| u_0 \|_{1, 2} + \| \tilde{v} \|_{2, 2}^2) + \eta]  \|
    \psi \|_{2, 2}^2\\
    & \gtrsim - [\theta + \eta]  \| \psi \|_{2, 2}^2 .
  \end{align*}
  Again, we used that by assumption $\| \tilde{u} \|_{1, 2} < \eta$ and $\| \tilde{v} - v_0
  \|_{2, 2} < \eta$.
  
  Finally we estimate the third term in $D^2  \tilde{\mathcal{I}}^{\theta}_{\rm vK}$ 
  with analogous arguments and obtain $(c) \geqslant c_2  \| \psi
  \|_{2, 2}^2$, for a $c_2 > 0$. Bringing the previous computations together, 
  with a $C_2 > 0$ we have:
  \[ D^2 \tilde{\mathcal{I}}^{\theta}_{\rm vK} (\tilde{w}) \geqslant c_1
     \theta^{- 1}  \| \varphi \|_{1, 2}^2 + (c_2 - C_1 \theta - C_2  (\theta +
     \eta))  \| \psi \|_{2, 2}^2, \]
  from which \eqref{eq:HessIvK-posdef} follows if $\theta_c$ and $\eta$ are chosen 
  sufficiently small. 
   
  From now on, we let $\tilde{w}_{\theta} = (\tilde{u}_{\theta},
  \tilde{v}_{\theta})$ be a critical point of
  $\tilde{\mathcal{I}}^{\theta}_{\rm vK}$ with
  \begin{equation}
    \label{eq:thm:loc-min-are-glob:1}
    \|\tilde{w}_{\theta}-\tilde{w}_{0} \|_{X}{\leqslant}{\eta}/3
    \end{equation}
  and we prove that it is in fact the unique global minimizer.}}
  
  \begin{step}{Estimates close to $\tilde{w}_{\theta}$.}
  Consider first some $\tilde{w} \in X$ which is close to $\tilde{w}_{\theta}$:
  \begin{equation}
      \label{eq:thm:loc-min-are-glob:2}
      \| \tilde{w}-\tilde{w}_{\theta} \|_{X} \leqslant 2 \eta/3.
  \end{equation}
  With a Taylor expansion and \eqref{eq:HessIvK-posdef} we see:
  \begin{align*}
     \tilde{\mathcal{I}}^{\theta}_{\rm vK} (\tilde{w}) 
     &= \tilde{\mathcal{I}}^{\theta}_{\rm vK} (\tilde{w}_{\theta}) +
     \underbrace{D \tilde{\mathcal{I}}^{\theta}_{\tmop{vK}}
     (\tilde{w}_{\theta})  [\tilde{w} - \tilde{w}_{\theta}]}_{= 0} + \frac{1}{2} D^2
     \tilde{\mathcal{I}}^{\theta}_{\rm vK} (z)  [\tilde{w} - \tilde{w}_{\theta},
     \tilde{w} - \tilde{w}_{\theta}] \\ 
     &\ge \tilde{\mathcal{I}}^{\theta}_{\rm vK} (\tilde{w}_{\theta})  
     + \frac{\bar{c}}{2} \| \tilde{w} - \tilde{w}_{\theta} \|_X^2, 
  \end{align*}
  where $z \in \{ \alpha \tilde{w} + (1 - \alpha)  \tilde{w}_{\theta} : \alpha \in [0,
  1] \} \subset B_{\eta}(\tilde{w}_0)$ by \eqref{eq:thm:loc-min-are-glob:1} and 
  \eqref{eq:thm:loc-min-are-glob:2}. So 
  \[ \tilde{\mathcal{I}}^{\theta}_{\rm vK} (\tilde{w}) >
     \tilde{\mathcal{I}}^{\theta}_{\rm vK} (\tilde{w}_{\theta}) 
     \text{ unless } \tilde{w} = \tilde{w}_{\theta}. \]
  \end{step}
  
  {\step{Estimates far away from $\tilde{w}_{\theta}$.}
  {Consider now any $\tilde{w} \in X$ with
  \begin{equation}\label{eq:w-faraway}
  \| \tilde{w} - \tilde{w}_{\theta} \|_X > 2 \eta / 3, 
  \end{equation}
  which by \eqref{eq:thm:loc-min-are-glob:1} implies that $\| \tilde{w} - \tilde{w}_0 \|_X > \eta / 3$. We
  consider two cases:
  \smallskip 

  {\noindent}\tmtextit{Case 1: $\| \tilde{v} - v_0 \|_{2, 2} \geqslant \eta / 6$:} We
  discard the first term in the energy, recall that $v_0 (x) = | x |^2 / 2 -
  c_0$ and use the lower bound for $Q_2$ in {\eqref{eq:Q2-bounds}} and
  Poincar{\'e}'s inequality:
  \[ \tilde{\mathcal{I}}^{\theta}_{\rm vK} (\tilde{w}) \geqslant \frac{1}{24} 
     \langle \nabla^2 \tilde{v} - I \rangle \geqslant \frac{c}{24}  \| \nabla^2 (\tilde{v} -
     v_0) \|_{0, 2}^2 \geqslant c_1 \eta^2  \]
  for a $c_1 > 0$. To compare this with the energy at $\tilde{w}_0$ we add and subtract
  $\tilde{\mathcal{I}}^{\theta}_{\rm vK} (\tilde{w}_0) =
  \frac{\theta}{2}  \langle \grs u_0 + \tfrac{1}{2} \nabla v_0 \otimes \nabla
  v_0 \rangle$:
  \begin{align*}
    \tilde{\mathcal{I}}^{\theta}_{\rm vK} (\tilde{w}) & \geqslant 
    \tilde{\mathcal{I}}^{\theta}_{\rm vK} (\tilde{w}_0) + c_1
    \eta^2 - \frac{\theta}{2}  \langle \grs u_0 + \tfrac{1}{2}
    \nabla v_0 \otimes \nabla v_0 \rangle\\
    & >  \tilde{\mathcal{I}}^{\theta}_{\rm vK} (\tilde{w}_0),
    \hspace*{\fill} \text{ for } \theta \text{ small enough,} \hspace*{\fill}\\
    & \geqslant  \tilde{\mathcal{I}}^{\theta}_{\rm vK}
    (\tilde{w}_{\theta}),
  \end{align*}
  where the last line is due to the fact that $\tilde{w}_{\theta}$ minimises
  $\tilde{\mathcal{I}}^{\theta}_{\rm vK}$ over the ball $B_{\frac{2}{3}
  \eta} (\tilde{w}_{\theta})$.
  \smallskip 

  {\noindent}\tmtextit{Case 2: $\| \tilde{v} - v_0 \|_{2, 2} < \eta / 6$:} In this 
  case we also have $\| \tilde{u} \|_{1, 2} \ge \eta / 6$ by 
  \eqref{eq:thm:loc-min-are-glob:1} and \eqref{eq:w-faraway}. We can
  estimate the energy for $\tilde{w}$ as follows:
  \begin{align*}
    \tilde{\mathcal{I}}^{\theta}_{\rm vK} (\tilde{w}) & = \frac{\theta}{2} 
    \langle \grs u_0 + \tfrac{1}{\theta}  \grs \tilde{u} + \tfrac{1}{2} \nabla \tilde{v}
    \otimes \nabla \tilde{v} \rangle + \frac{1}{24}  \langle \nabla^2 \tilde{v} - I \rangle\\
    & \geqslant \frac{1}{2 \theta}  \left\langle \grs \tilde{u} \right\rangle +
    \frac{\theta}{2}  \langle \grs u_0 + \tfrac{1}{2} \nabla \tilde{v} \otimes \nabla
    \tilde{v} \rangle\\
    &  \quad + \langle \grs \tilde{u}, \grs u_0 + \tfrac{1}{2} \nabla \tilde{v} \otimes
    \nabla \tilde{v} \rangle\\
    & \geqslant \Big( \frac{1}{2 \theta} - \varepsilon \Big) 
    \left\langle \grs \tilde{u} \right\rangle + \Big( \frac{\theta}{2} - \frac{1}{4
    \varepsilon} \Big)  \langle \grs u_0 + \tfrac{1}{2} \nabla \tilde{v} \otimes
    \nabla \tilde{v} \rangle\\
    & = \frac{1}{4 \theta}  \left\langle \grs \tilde{u} \right\rangle -
    \frac{\theta}{2}  \langle \grs u_0 + \tfrac{1}{2} \nabla \tilde{v} \otimes \nabla
    \tilde{v} \rangle,
  \end{align*}
  where we used the Cauchy-Schwarz inequality with $\varepsilon \assign \frac{1}{4 \theta}$. Both terms may be
  estimated once again by a combination of the bounds {\eqref{eq:Q2-bounds}}
  for $Q_2$, Sobolev's embedding $W^{1, 2} (\omega) \hookrightarrow L^4
  (\omega)$ and Poincar{\'e}'s inequality:
  \[ \frac{1}{4 \theta}  \left\langle \grs \tilde{u} \right\rangle \gtrsim
     \tfrac{1}{\theta}  \| \tilde{u} \|_{1, 2}^2, \]
  and
  \begin{align*}
    \tfrac{1}{2}  \langle \grs u_0 + \tfrac{1}{2} \nabla \tilde{v} \otimes \nabla \tilde{v}
    \rangle 
    & \leqslant \left\langle \grs u_0 \right\rangle + \tfrac{1}{2} 
    \langle \nabla \tilde{v} \otimes \nabla \tilde{v} \rangle\\
    & \lesssim \left\| \grs u_0 \right\|_{0, 2}^2 + \| \nabla \tilde{v} \|_{0,
    4}^2\\
    & \lesssim 1 + \| \tilde{v} \|_{2, 2}^2 
    \lesssim 1 
  \end{align*}
  since $\| \tilde{v} - v_0 \|_{2, 2}^2 < \eta / 6$. 
  Now plug this back into the previous estimate and insert
  \[ \tilde{\mathcal{I}}^{\theta}_{\rm vK} (\tilde{w}_0) =
     \frac{\theta}{2}  \langle \grs u_0 + \tfrac{1}{2} \nabla v_0 \otimes
     \nabla v_0 \rangle =: \tilde{C} \theta \]
  to obtain
  \begin{align*}
    \tilde{\mathcal{I}}^{\theta}_{\rm vK} (\tilde{w}) & \geqslant 
    \tfrac{c_1}{\theta}  \| \tilde{u} \|_{1, 2}^2 - C_1 \theta (C + C \eta^2 / 9)\\
    &\geqslant \tfrac{c_1 \eta^2}{36 \theta}  \| \tilde{u} \|_{1, 2}^2 - (C_1 + \tilde{C}) \theta 
    + \tilde{\mathcal{I}}^{\theta}_{\rm vK} (\tilde{w}_0)\\
    & > \tilde{\mathcal{I}}^{\theta}_{\rm vK} (\tilde{w}_0),
    \hspace*{\fill} \text{ for } \theta \text{ small enough}, \hspace*{\fill}\\
    & \geqslant \tilde{\mathcal{I}}^{\theta}_{\rm vK}
    (\tilde{w}_{\theta}) .
  \end{align*}
  As above, the last line holds because $\tilde{w}_{\theta}$ minimises
  $\tilde{\mathcal{I}}^{\theta}_{\rm vK}$ in a $\frac{2}{3}
  \eta$-neighbourhood of itself.}}
\end{proof}

\section{Discretisation of the interpolating theory}\label{sec:numerics}

Our goal in this section is to study the qualitative behaviour of minimisers
in the interpolating regime $\alpha = 3$. To this end, we develop a simple 
numerical method to approximate minimisers and prove $\Gamma$-convergence to 
the continuous problem. Numerical computations are then conducted for the 
prototypical example from \eqref{eq:energy-btI}. We experimentally evaluate the
conjectured existence of a critical value $\theta_c > 0$ for which the
symmetry of minimisers is ``strongly'' broken. We will not provide a full
theoretical analysis, but instead adduce some empirical evidence to support
the claim. 

As can only be expected from a topic originating in structural mechanics,
numerical methods for plate models are a vast field with a long history and as
such a comprehensive review falls well beyond the scope of this contribution.
However, it can be said that a significant portion of finite element
approaches focus on the Euler-Lagrange equations. For von K{\'a}rm{\'a}n-like
theories like our interpolating regime, these are transformed into an
equivalent form in terms of the {\tmem{Airy stress function}}
{\cite[{\textsection}2.6.2]{howell_applied_2008}}. The resulting system of
equations is of fourth order and can be solved with conforming $C^1$ elements
like Argyris or specifically taylored ones. To avoid the higher number of
degrees of freedom, non-conforming methods can be used instead,\footnote{See
{\cite{mallik_conforming_2016,mallik_nonconforming_2016}} for particular
instances of a conforming and a non-conforming method respectively, as well as
reviews of recent literature.} but a poor choice of the discretisation can
suffer from {\tmem{locking}}, as briefly described in Remark
\ref{rem:fem-common-issues}. Some successful classical methods employ $C^0$
Discrete Kirchhoff triangles (DKT), but it is also possible to
employ standard Lagrange elements with penalty methods
{\cite{brenner_interior_2017}}, as we will do.

A recent line of work, upon which we heavily build in this section, is that of
{\cite{bartels_approximation_2013,bartels_numerical_2016}}, where the author
develops discrete gradient flows for the direct computation of (local)
minimisers of non-linear Kirchhoff and von K{\'a}rm{\'a}n models.
$\Gamma$-convergence and compactness results are also proved showing the
convergence of the discrete energies to the continuous ones, as well as their
respective minimisers.\footnote{For a concise introduction to
$\Gamma$-convergence for Galerkin discretisations and quadrature
approximations of energy functionals, see {\cite{ortner_gamma_2004}}.}
Crucially, these papers use DKTs for the discretisation of the out-of-plane
displacements, allowing for a representation of derivatives at nodes in the
mesh which is decoupled from function values. This enables e.g.\ the imposition
of an isometry constraint for the non-linear Kirchhoff model, but also the
computation of a discrete gradient $\nabla_{\varepsilon}$ projecting the true
gradient $\nabla v_{\varepsilon}$ of a discrete function $v_{\varepsilon}$
into a standard piecewise $P_2$ space. The operator $\nabla_{\varepsilon}$ has
good interpolation properties circumventing the lack of $C^1$ smoothness of
DKTs which would otherwise make them unsuitable to approximate solutions in
$H^2$. We refer to the book {\cite{bartels_numerical_2015}} for a systematic and
mostly self-contained introduction to these methods.

\subsection{Discretisation}

We wish to investigate minimal energy configurations of the following
functional:
\[ \mathcal{I}^{\theta}_{\tmop{vK}} (u, v) 
   = \frac{1}{2}  \int_{\omega} \overline{Q}_2 \big[ \theta^{1 / 2} (\grs u +
    \tfrac{1}{2} \nabla v \otimes \nabla v), - \nabla^2 v \big] \mathd x, \]
where $(u, v) \in W^{1, 2} (\omega ; \mathbb{R}^2) \times W^{2, 2} (\omega ;
\mathbb{R}^2)$, cf.\ \eqref{eq:energy-vk}. We recall the representation of 
$\overline{Q}_2$ derived in \eqref{eq:Q2bar}, which in particular shows that 
$\overline{Q}_2$ is a strictly convex polynomial of degree $2$ on 
$\mathbb{R}^{2 \times 2}_{\tmop{sym}} \times \mathbb{R}^{2 \times 2}_{\tmop{sym}}$. 
It is extended to a convex quadratic function on 
$\mathbb{R}^{2 \times 2} \times \mathbb{R}^{2 \times 2}$ by our setting 
\[ \overline{Q}[E, F] = \overline{Q}[E_{\tmop{sym}}, F_{\tmop{sym}}] \] 
for $F, G \in \mathbb{R}^{2 \times 2}$. We assume that $\omega \subset \mathbb{R}$
is a bounded simply connected domain with Lipschitz  boundary and barycenter $0$. 
We implement (projected) gradient descent in a non-conforming
method using $C^0$ linear Lagrange elements. The first step is to transform the
problem into one of constrained minimisation reducing the order of the
elements required. 

\begin{problem}
  \label{prob:cont-constrained-vk}Find minimisers of
  \begin{equation}
    \label{eq:vk-curl-energy} J^{\theta} (u, z) 
    = \frac{1}{2}  \int_{\omega} \overline{Q}_2 \big[ \theta^{1 / 2} (\grs u +
    \tfrac{1}{2} z \otimes z), - \nabla z \big] \mathd x,
  \end{equation}
  with $u, z \in
  W^{1, 2} (\omega ; \mathbb{R}^2)$ and
  \[ z \in Z \assign \{ \zeta \in W^{1, 2} (\omega ; \mathbb{R}^2) :
     \tmop{curl} \zeta = 0 \} . \]
  If $z \nin Z$, then we set $J^{\theta} (u, z) = + \infty$.
\end{problem}

Note that our assumptions on $\omega$ guarantee that 
$Z = \{ \nabla v : v \in W^{2, 2} (\omega) \}$. 
We can now use $H^1$-conforming elements but, for simplicity of
implementation, instead of adding the constraint into the discrete spaces to
obtain a truly conforming discretisation, we add a penalty term
$\mu_{\varepsilon}  \| \tmop{curl} z_{\varepsilon} \|^2$ to ensure that the
solutions $z_{\varepsilon}$ are close to gradients.

Assume from now on that $\omega$ is a polygonal domain. For fixed $\varepsilon
> 0$, introduce a {\em quasi-uniform} triangulation
$\mathcal{T}_{\varepsilon}$ of $\omega$ with triangles $T$ of uniformly
bounded diameter \ $c^{- 1} \varepsilon \leqslant \varepsilon_T \leqslant c
\varepsilon$ for some $c > 0$ and all $\varepsilon > 0$ and $T \in
\mathcal{T}_{\varepsilon}$.\footnote{ Note that this does not allow for
arbitrary local refinements or {\em grading} (a different scaling of
simplices along different directions as $\varepsilon \rightarrow 0$), but the
fact that this is not optimal is not of concern here.} Such a mesh is in
particular said to be, in virtue of the uniform upper bound,
{\em shape-regular}. We denote by $\mathcal{N}_{\varepsilon}$ the set of
all nodes of the triangulation. Define $V_{\varepsilon}$ to be the standard
piecewise affine, globally continuous Lagrange $P_1$ finite element space
$\mathcal{S}^1 (\mathcal{T}_{\varepsilon})$ in two dimensions:
\[ V_{\varepsilon} \assign \left\{ v_{\varepsilon} \in C (\overline{\omega} ;
   \mathbb{R}^2) : v_{\varepsilon |T} \in P_1 (T)^2 \text{ for all } T \in
   \mathcal{T}_{\varepsilon} \right\} . \]
Quadrature rules will be chosen to be exact for this polynomial degree and the
first integrand in the energy interpolated for this to apply by means of the
{\em interpolated quadratic function}
\[ \overline{Q}^{\varepsilon}_2 \assign \hat{I}_{\varepsilon} \circ \overline{Q}_2. \]
This is defined (with a slight abuse of notation) component-wise using the
{\em element-wise nodal interpolant} $\hat{I}_{\varepsilon}$, defined for
functions $v \in L^{\infty} (\omega)$ such that $v_{|T} \in C (\overline{T})$
for all $T \in \mathcal{T}_{\varepsilon}$ as
\begin{equation}
  \label{eq:elementwise-nodal-interpolant} \hat{I}_{\varepsilon} (v) \assign
  \sum_{T \in \mathcal{T}_{\varepsilon}} \sum_{z \in \mathcal{N}_{\varepsilon}
  \cap T} v_{|T} (z) \varphi_{z|T},
\end{equation}
where $\varphi_{z|T}$ is the truncation by zero outside $T$ of the global
basis function $\varphi_z \in \mathcal{S}^1$. Because this is a linear
combination of truncated global basis functions, the range of
$\hat{I}_{\varepsilon}$ is the space $\hat{\mathcal{S}}^1
(\mathcal{T}_{\varepsilon})$ of discontinuous, piecewise affine Lagrange
elements.

In cases where the function to be interpolated is continuous, the
element-wise nodal interpolant coincides with the {\em standard nodal
interpolant} into the space $\mathcal{S}^1$ of globally continuous, piecewise
affine functions, which is defined as
\begin{equation}
  \label{eq:nodal-interpolant} I_{\varepsilon} (v) \assign 
    \sum_{z \in \mathcal{N}_{\varepsilon}} v (z) \varphi_z.
\end{equation}

Notice that the shape functions $\varphi_z$ are not truncated. In order to
control the error incurred by the interpolation. When working with discontinuous functions
in $\hat{\mathcal{S}}^1$, we will \ use the following local result.
This follows from standard nodal interpolation estimates (see e.g.
{\cite[Theorem 4.28]{grossmann_numerical_2007}} or
{\cite[(4.4.4)]{brenner_mathematical_2008}})
\[ | I_{\varepsilon} (v) - v |_{r, p} \leqslant C \varepsilon^{2 - r}  \|
     D^2 v \|_{0, p}, \]
 or can be shown directly, e.g. in
{\cite[Proposition 3.1]{bartels_numerical_2015}}.

\begin{lemma}[Local interpolation estimate]
  \label{lem:local-interpolation-estimate}Let $T \in
  \mathcal{T}_{\varepsilon}$ and $v \in C^1 (\overline{T})$. If
  $\hat{I}_{\varepsilon}$ is the element-wise nodal interpolant
  {\eqref{eq:elementwise-nodal-interpolant}}, then
  \[ \| v - \hat{I}_{\varepsilon} (v) \|_{0, p, T} \leqslant C \varepsilon \|
     D v \|_{0, p, T} . \]
\end{lemma}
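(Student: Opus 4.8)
The plan is to prove the local interpolation estimate

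\[ \| v - \hat{I}_{\varepsilon} (v) \|_{0, p, T} \leqslant C \varepsilon \| D v \|_{0, p, T} \]

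for $v \in C^1(\overline{T})$ by a standard scaling argument: first establish the estimate on a fixed reference simplex $\hat{T}$, then transport it to an arbitrary $T \in \mathcal{T}_{\varepsilon}$ via the affine map relating $\hat{T}$ and $T$, tracking the powers of $\varepsilon$ that appear through the Jacobians and through the shape-regularity of the mesh.

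\textbf{Reference estimate.} On $\hat{T}$, since $v \mapsto v - \hat{I}(v)$ vanishes on constants (the nodal interpolant reproduces constants exactly) and since $\hat{I}$ is bounded from $C(\overline{\hat{T}})$ into $L^p(\hat{T})$ (it is a finite linear combination of nodal values times fixed $P_1$ basis functions), the operator $v \mapsto v - \hat{I}(v)$ is a bounded linear operator from $C^1(\overline{\hat{T}})$ to $L^p(\hat{T})$ that annihilates constants. Hence for any $v \in C^1(\overline{\hat{T}})$ and any constant $c$, $\|v - \hat{I}(v)\|_{0,p,\hat{T}} = \|(v-c) - \hat{I}(v-c)\|_{0,p,\hat{T}} \leqslant C \|v - c\|_{C^1(\hat{T})}$; taking $c = v(\hat{x}_0)$ for a fixed point and using that $\overline{\hat{T}}$ is convex with bounded diameter, $\|v-c\|_{0,\infty,\hat{T}} \lesssim \|D v\|_{0,\infty,\hat{T}}$, and one arrives at $\|v - \hat{I}(v)\|_{0,p,\hat{T}} \leqslant \hat{C} \|D v\|_{0,p,\hat{T}}$ (absorbing the $C^1$ seminorm term directly and the constant term via the mean-value bound; on the fixed domain $\hat{T}$ all norms involved are equivalent up to the fixed constant $\hat C$). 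Here I use $p \in [1,\infty]$ and that $\hat T$ has measure of order one.

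\textbf{Scaling to $T$.} Let $\Phi_T : \hat{T} \to T$, $\Phi_T(\hat{x}) = B_T \hat{x} + b_T$ be the affine bijection, and note that $\hat{I}_{\varepsilon}$ commutes with affine pullback, i.e.\ $\widehat{(v \circ \Phi_T)} = (\hat{I}_{\varepsilon} v) \circ \Phi_T$, because nodes and basis functions transform correspondingly. Writing $\hat{v} = v \circ \Phi_T$, the change of variables gives $\|v - \hat{I}_{\varepsilon} v\|_{0,p,T}^p = |\det B_T| \, \|\hat{v} - \hat{I}(\hat{v})\|_{0,p,\hat{T}}^p$ (with the obvious modification for $p = \infty$), while $\|D\hat{v}\|_{0,p,\hat{T}}^p \leqslant \|B_T\|^p |\det B_T|^{-1} \|D v\|_{0,p,T}^p$. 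Combining with the reference estimate yields $\|v - \hat{I}_{\varepsilon} v\|_{0,p,T} \leqslant \hat{C} \|B_T\| \|D v\|_{0,p,T}$. Finally, shape-regularity (indeed quasi-uniformity) of $\mathcal{T}_{\varepsilon}$ gives $\|B_T\| \leqslant C \varepsilon_T \leqslant C' \varepsilon$, which produces the claimed bound with a constant $C$ independent of $T$ and $\varepsilon$.

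There is no real obstacle here; the only point requiring a little care is the commutation of $\hat{I}_{\varepsilon}$ with affine changes of variables and the correct bookkeeping of Jacobian factors for general $p \in [1,\infty]$ (including the endpoint $p = \infty$, where the determinant factors cancel and one simply bounds $\|B_T\| \lesssim \varepsilon$). Alternatively, as indicated in the text, one may simply invoke \cite[Proposition 3.1]{bartels_numerical_2015}, where exactly this estimate is established; the argument above reproduces it for completeness.
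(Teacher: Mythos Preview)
Your scaling-to-a-reference-element template is standard and the bookkeeping of Jacobians is fine, but the reference estimate step has a genuine gap. You correctly obtain
\[
  \|v - \hat I(v)\|_{0,p,\hat T} \lesssim \|v-c\|_{C^1(\hat T)} \lesssim \|Dv\|_{0,\infty,\hat T},
\]
and then assert that this yields $\|v - \hat I(v)\|_{0,p,\hat T} \leqslant \hat C \|Dv\|_{0,p,\hat T}$ because ``on the fixed domain $\hat T$ all norms involved are equivalent''. That is false: $Dv$ ranges over an infinite-dimensional space when $v$ ranges over $C^1(\overline{\hat T})$, and $\|\cdot\|_{0,\infty}$ is not controlled by $\|\cdot\|_{0,p}$ there. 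In fact the reference inequality you need fails for small $p$ in two dimensions. A concrete counterexample on $\hat T$: let $v_n$ be a smooth bump of height $1$ supported in $B_{1/n}(z_1) \cap \hat T$ for a vertex $z_1$. Then $\hat I(v_n)$ is the fixed barycentric coordinate $\lambda_1$, while $\|Dv_n\|_{0,1,\hat T} \sim n \cdot n^{-2} = n^{-1} \to 0$ and $\|v_n - \hat I(v_n)\|_{0,1,\hat T} \to \|\lambda_1\|_{0,1,\hat T} > 0$. So no uniform constant $\hat C$ exists for $p=1$, and hence the lemma as literally stated (general $v \in C^1$, all $p$) cannot be proved.

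The paper itself does not give a proof but only cites references; note that the displayed ``standard nodal interpolation estimate'' it quotes has $\|D^2 v\|_{0,p}$ on the right, which is a different bound. What actually matters for the applications in the paper is that the lemma is only ever invoked with $v|_T$ a polynomial of fixed degree (either $\overline Q_2[A_\varepsilon]$ or $|z_\varepsilon|^4$ built from piecewise $P_1$ data). On that finite-dimensional space of polynomials your norm-equivalence step \emph{is} legitimate, and then your scaling argument goes through verbatim with a constant depending on the degree. If you insist on general $v \in C^1$, you must restrict to $p$ large enough that $W^{1,p}(\hat T)\hookrightarrow C(\overline{\hat T})$ (here $p>2$), so that the nodal interpolant is bounded on $W^{1,p}$ and a Bramble--Hilbert/Deny--Lions argument gives the reference estimate with $\|Dv\|_{0,p}$ on the right; your proof should make one of these two restrictions explicit.
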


The goal is to solve:

\begin{problem}
  \label{prob:discrete-vk}Let $\mu_{\varepsilon} > 0$. Compute minimisers of
  the discrete energy
  \begin{align}
  \begin{split}
    J^{\theta}_{\varepsilon} (u_{\varepsilon}, z_{\varepsilon}) & =
    \frac{1}{2}  \int_{\omega} \overline{Q}^{\varepsilon}_2 \big[ \theta^{1 / 2} (\grs u_{\varepsilon} +
    \tfrac{1}{2} z_{\varepsilon} \otimes z_{\varepsilon}), - \nabla z_{\varepsilon} \big] \mathd x \\
    &  \qquad + \mu_{\varepsilon}  \int_{\omega} |
    \tmop{curl} z_{\varepsilon} |^2 \mathd x, 
    \label{eq:discrete-vk-curl-energy}
  \end{split}
  \end{align}
  for $(u_{\varepsilon}, z_{\varepsilon}) \in V_{\varepsilon}^2$.
(As usual, if $(u_{\varepsilon}, z_{\varepsilon}) \in W^{1, 2} (\omega ;
\mathbb{R}^2)^2 \backslash V^2_{\varepsilon}$, we set
$J^{\theta}_{\varepsilon} (u_{\varepsilon}, z_{\varepsilon}) = + \infty$.) 
\end{problem}

\begin{remark}[Scaling of the constants]
  The penalty $\mu_{\varepsilon} = \mu (\varepsilon)$ needs to explode as
  $\varepsilon \rightarrow 0$ in order for the functionals to
  $\Gamma$-converge (Theorem \ref{thm:discrete-vk-gamma-convergence}).
  However, large penalties negatively affect the condition number of the
  system, so that an adequate choice for $\mu_{\varepsilon}$, dependent on the
  mesh size $\varepsilon$, is required
  {\cite[p.416]{grossmann_numerical_2007}}. We have not explictly
  investigated how this requirement interacts with the $\Gamma$-convergence of
  the functionals, but in our proof we require only that $\mu_{\varepsilon}
  \rightarrow \infty$ not faster than $\varepsilon^{- 2}$. In the
  implementation we use $\mu_{\varepsilon} = \varepsilon^{-1/2}$.
  Analogously, large values of the Lam{\'e} constants have a similar effect
  and therefore hinder convergence, so one needs to scale them to the order of
  the problem.
\end{remark}


\begin{remark}[Common issues with FEM for plates]
  \label{rem:fem-common-issues} Discretisations for lower dimensional theories
  can face complications due to the infamous {\em locking phenomena}. In a
  nutshell, these mean that as the thickness of the plate tends to zero,
  discrete solutions ``lock'' to stiff states of lower, or even vanishing,
  bending or shearing than the analytic ones.\footnote{We refer to
  {\cite{babuska_locking_1992}} for a first rigorous definition of locking, to
  {\cite[Chapters 5 and 6]{prathap_finite_2001}} for detailed computations
  highlighting the issues with linear elements in the context of Timoshenko
  beams and to the thesis {\cite{quaglino_membrane_2012}} for a thorough and
  detailed analysis of locking in shell models.} Another instance of
  unexpected behaviour is known as the Babu{\v s}ka paradox
  {\cite{babuska_plate_1990}}, again a failure to converge as expected, which
  can happen in e.g. the Kirchhoff model when both vertical and tangential
  displacements are fixed at the boundaries of a polygonal domain: these
  so-called ``hard'' support constraints are not enforced in the same manner
  as in the continuous model because of the approximated domain.
  
  There are two potential sources of locking in our setting: the penalty term
  $\mu_{\epsilon}$, which is akin to the shear strain in Timoshenko beams, and
  $\theta$. We have not obtained any a priori bounds on the error in this work,
  but a rigorous treatment of the problem would require estimates which are uniform
  in these parameters as the mesh diameter goes to zero. For the regimes studied
  and the geometries considered we have found the issue to be of moderate 
  practical relevance, but it does manifest itself e.g. with more complicated
  domains or higher values of $\theta$.
  
  Finally, our simulations will not suffer from Babu{\v s}ka's paradox because
  we do not prescribe boundary conditions.
\end{remark}

\subsection{\texorpdfstring{$\Gamma$}{Γ}-convergence of the discrete energies}

The first step in the proof that $J^{\theta}_{\varepsilon}
\overset{\Gamma}{\rightarrow} J^{\theta}$ is dispensing with the interpolation
operators for numerical integration: due to the good properties of
$\hat{I}_{\varepsilon}$, we can assume that we work with the true integrals
$\int \overline{Q}_2$ instead of $\int \overline{Q}_2^{\varepsilon}$: 

\begin{lemma}[Numerical integration]
  \label{lem:numerical-integration}Let $u_{\varepsilon}, z_{\varepsilon} \in
  W^{1, 2} (\omega ; \mathbb{R}^2)$ be uniformly bounded in $W^{1, 2}$ and let
  $Q_2^{\varepsilon} = \hat{I}_{\varepsilon} \circ Q_2$ as above. Let
  $A_{\varepsilon} \assign \big( \theta^{1 / 2} ( \grs u_{\varepsilon} + \tfrac{1}{2} z_{\varepsilon}
  \otimes z_{\varepsilon}), - \nabla z_{\varepsilon} \big)$. Then, as $\varepsilon \rightarrow 0$:
  \[ \| \overline{Q}_2^{\varepsilon} [A_{\varepsilon}] - \overline{Q}_2 [A_{\varepsilon}] \|_{0, 1}
     \rightarrow 0. \]
\end{lemma}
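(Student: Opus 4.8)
The plan is to reduce the $L^1$-difference of the interpolated and true quadratic expressions to a sum of local interpolation errors on each triangle, controlled by Lemma~\ref{lem:local-interpolation-estimate}, and then to exploit the uniform $W^{1,2}$-bound on $(u_\varepsilon, z_\varepsilon)$ together with the quadratic (hence locally Lipschitz) nature of $\overline{Q}_2$ to conclude. First I would recall that on each $T \in \mathcal{T}_\varepsilon$ the map $A_\varepsilon|_T = \bigl(\theta^{1/2}(\grs u_\varepsilon + \tfrac12 z_\varepsilon \otimes z_\varepsilon), -\nabla z_\varepsilon\bigr)|_T$ has the following structure: since $u_\varepsilon, z_\varepsilon \in V_\varepsilon$ are piecewise affine, $\grs u_\varepsilon$ and $\nabla z_\varepsilon$ are \emph{constant} on $T$, while $z_\varepsilon \otimes z_\varepsilon$ is a polynomial of degree $2$ on $T$, in particular in $C^1(\overline T)$. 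Hence $\overline{Q}_2[A_\varepsilon]|_T$ is a polynomial of degree $4$ on $T$ (it is quadratic in $A_\varepsilon$, which is at most quadratic), and in particular it lies in $C^1(\overline T)$, so the element-wise nodal interpolant $\hat I_\varepsilon$ applied to it is well defined and Lemma~\ref{lem:local-interpolation-estimate} applies to the scalar function $\overline{Q}_2[A_\varepsilon]$ directly.

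Next I would write
\[
  \|\overline{Q}_2^\varepsilon[A_\varepsilon] - \overline{Q}_2[A_\varepsilon]\|_{0,1}
  = \sum_{T \in \mathcal{T}_\varepsilon} \|\hat I_\varepsilon(\overline{Q}_2[A_\varepsilon]) - \overline{Q}_2[A_\varepsilon]\|_{0,1,T}
  \le C\varepsilon \sum_{T} \|D(\overline{Q}_2[A_\varepsilon])\|_{0,1,T},
\]
using Lemma~\ref{lem:local-interpolation-estimate} with $p=1$ on each $T$. It then remains to bound $\sum_T \|D(\overline{Q}_2[A_\varepsilon])\|_{0,1,T}$ by a constant independent of $\varepsilon$ (or at least by something that stays bounded). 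Since $\overline{Q}_2$ is a quadratic polynomial, $D(\overline{Q}_2[A_\varepsilon]) = D\overline{Q}_2[A_\varepsilon] \cdot DA_\varepsilon$ with $|D\overline{Q}_2[A_\varepsilon]| \lesssim |A_\varepsilon|$ pointwise, and $DA_\varepsilon$ involves $D(z_\varepsilon \otimes z_\varepsilon) \sim z_\varepsilon \otimes \nabla z_\varepsilon$ together with $D^2 u_\varepsilon$ and $D^2 z_\varepsilon$; on each triangle the latter two vanish, so the only contribution to $DA_\varepsilon$ is the term $\theta^{1/2} (z_\varepsilon \otimes \nabla z_\varepsilon)_{\mathrm{sym}}$, which is the gradient of the degree-$2$ piece. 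Thus pointwise on $T$ one has $|D(\overline{Q}_2[A_\varepsilon])| \lesssim |A_\varepsilon|\,|z_\varepsilon|\,|\nabla z_\varepsilon| \lesssim (|\grs u_\varepsilon| + |z_\varepsilon|^2 + |\nabla z_\varepsilon|)\,|z_\varepsilon|\,|\nabla z_\varepsilon|$. Summing over $T$ and integrating, the worst term is $\int_\omega |z_\varepsilon|^3 |\nabla z_\varepsilon|$, which by Hölder is bounded by $\|z_\varepsilon\|_{0,6}^3 \|\nabla z_\varepsilon\|_{0,2}$, and by the Sobolev embedding $W^{1,2}(\omega) \hookrightarrow L^6(\omega)$ (valid in two dimensions for every finite exponent, in particular $6$) this is controlled by a power of $\|z_\varepsilon\|_{1,2}$, hence uniformly bounded by hypothesis; the other terms are handled identically with lower powers. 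Therefore the whole sum is $O(1)$ and the right-hand side is $O(\varepsilon) \to 0$.

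The main obstacle, and the point that needs care rather than being purely routine, is the bookkeeping that ensures $\overline{Q}_2[A_\varepsilon]$ is genuinely in $C^1(\overline T)$ on each triangle so that the local interpolation estimate applies with the $\|Dv\|_{0,p,T}$ on the right — this hinges on the observation that although $A_\varepsilon$ is only piecewise polynomial (and globally discontinuous across edges, because $\nabla z_\varepsilon$ jumps), we are applying $\hat I_\varepsilon$ and the estimate \emph{element by element}, so global regularity is irrelevant. A secondary point to verify is that the Hölder exponents line up so that every term is absorbed by the uniform $W^{1,2}$-bound via the Sobolev embedding into $L^p$, $p<\infty$; one must check that no term requires an $L^\infty$ bound on $z_\varepsilon$ or $\nabla z_\varepsilon$, which it does not, since the highest total homogeneity in $(z_\varepsilon, \nabla z_\varepsilon)$ appearing is degree $4$ and $\nabla z_\varepsilon$ enters at most linearly in each summand, leaving enough room to place all the $z_\varepsilon$-factors in a sufficiently high (but finite) Lebesgue space. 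Once these two points are settled the estimate is immediate.
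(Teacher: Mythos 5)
There is a genuine gap. Your bookkeeping of the terms in $D\bigl(\overline{Q}_2[A_\varepsilon]\bigr)$ is off, and it hides exactly the difficulty the paper has to work around. You have $|A_\varepsilon| \lesssim |\grs u_\varepsilon| + |z_\varepsilon|^2 + |\nabla z_\varepsilon|$, and on each triangle $|DA_\varepsilon| \lesssim |z_\varepsilon|\,|\nabla z_\varepsilon|$, so
\[
|D(\overline{Q}_2[A_\varepsilon])| \;\lesssim\; \bigl(1 + |\grs u_\varepsilon| + |z_\varepsilon|^2 + |\nabla z_\varepsilon|\bigr)\,|z_\varepsilon|\,|\nabla z_\varepsilon|,
\]
and the expansion contains the summand $|z_\varepsilon|\,|\nabla z_\varepsilon|^2$, in which $\nabla z_\varepsilon$ appears \emph{quadratically} — contradicting your statement that ``$\nabla z_\varepsilon$ enters at most linearly in each summand.'' This term is the worst one, not $|z_\varepsilon|^3|\nabla z_\varepsilon|$ as you claim. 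Trying to bound $\int_\omega |z_\varepsilon|\,|\nabla z_\varepsilon|^2$ by H\"older forces the exponent on $z_\varepsilon$ to be $\infty$ (with $\nabla z_\varepsilon \in L^2$), and $W^{1,2}$ does not embed into $L^\infty$ in two dimensions. So, contrary to your claim, an $L^\infty$ control on $z_\varepsilon$ \emph{is} required, and the uniform $W^{1,2}$ bound plus Sobolev embedding alone cannot close the argument; the $O(\varepsilon)$ rate you obtain is therefore not justified.

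The paper circumvents this precisely by paying a fractional power of $\varepsilon$: after splitting $\int (1+|A_\varepsilon|)|DA_\varepsilon| \le \|1+|A_\varepsilon|\|_{0,2}\,\|DA_\varepsilon\|_{0,2}$, it invokes the discrete inverse estimate $\max_T \|z_\varepsilon\|_{0,\infty,T} \lesssim \varepsilon^{-1/2}\bigl(\sum_T \|z_\varepsilon\|_{0,4,T}^4\bigr)^{1/4}$ to control the pointwise factor $|z_\varepsilon|$, which yields $\|DA_\varepsilon\|_{0,2} \lesssim \varepsilon^{-1/2}$. Combined with the $\varepsilon$ from Lemma~\ref{lem:local-interpolation-estimate} this gives only $O(\varepsilon^{1/2})$ — enough for the statement, but weaker than the $O(\varepsilon)$ you were hoping for, and crucially reliant on discreteness of $z_\varepsilon$ (the inverse estimate). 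To repair your proof you need to import some such local inverse inequality (or another mechanism supplying $L^\infty$ control at a cost of negative powers of $\varepsilon$) and accept a reduced rate of decay.
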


\begin{proof}
  By the local interpolation estimate Lemma
  \ref{lem:local-interpolation-estimate}:
  \begin{align*}
    &\int_{\omega} | \overline{Q}_2^{\varepsilon} [A_{\varepsilon}] - \overline{Q}_2
    [A_{\varepsilon}] | \mathd x \\ 
    &~~ \lesssim \varepsilon \sum_{T \in
    \mathcal{T}_{\varepsilon}} \int_T | D \overline{Q}_2 [A_{\varepsilon}] | \mathd x\\
    &~~ \lesssim \varepsilon \sum_{T \in
    \mathcal{T}_{\varepsilon}} \int_T (1 + | A_{\varepsilon} |)  | D
    A_{\varepsilon} | \mathd x\\
    &~~ \lesssim \varepsilon \left( \sum_{T \in \mathcal{T}_{\varepsilon}}
    \int_T (1 + | A_{\varepsilon} |)^2 \mathd x \right)^{1 / 2}  \left( \sum_{T \in
    \mathcal{T}_{\varepsilon}} \int_T | D A_{\varepsilon} |^2 \mathd
    x \right)^{1 / 2}.
  \end{align*}
  Now, the first term is simply $\| 1 + |A_{\varepsilon}| \|_{0, 2, \omega} \le 
  |\omega|^{1/2} + \| A_{\varepsilon} \|_{0, 2, \omega}$ which
  is uniformly bounded since $\| z_{\varepsilon} \otimes z_{\varepsilon}
  \|_{0, 2} = \| z_{\varepsilon} \|_{0, 4}^2 \lesssim \| z_{\varepsilon}
  \|_{1, 2}^2$, and for the second we use that both $\grs u_{\varepsilon}$ 
  and $\nabla z_{\varepsilon}$ are piecewise constant so that for $i = 1, 2$,
  \[ | \partial_i A_{\varepsilon} |^2 = \theta | z_{\varepsilon} \otimes \partial_i
     z_{\varepsilon} + \partial_i z_{\varepsilon} \otimes z_{\varepsilon} |^2
     \lesssim | z_{\varepsilon} |^2  | \partial_i z_{\varepsilon} |^2, 
     \]
  and
  \[ \sum_{T \in \mathcal{T}_{\varepsilon}} \int_T | \partial_i
     A_{\varepsilon} |^2 \mathd x \lesssim \sum_{T \in
     \mathcal{T}_{\varepsilon}} \int_T | z_{\varepsilon} |^2  | \partial_i
     z_{\varepsilon} |^2 \mathd x \leqslant \sum_{T \in
     \mathcal{T}_{\varepsilon}} \| z_{\varepsilon} \|_{0, \infty, T}^2  \|
     \partial_i z_{\varepsilon} \|_{0, 2, T}^2 . \]
  A standard inverse estimate (see e.g. {\cite[Theorem
  4.5.11]{brenner_mathematical_2008}}) provides the bound
  \[ \underset{T \in \mathcal{T}_{\varepsilon}}{\max}  \| z_{\varepsilon}
     \|_{0, \infty, T} \lesssim \varepsilon^{- 1 / 2}  \left( \sum_{T \in
     \mathcal{T}_{\varepsilon}} \| z_{\varepsilon} \|_{0, 4, T}^4 \right)^{1 /
     4} . \]
  We plug this into the preceding computation to obtain
  \begin{align*}
    \sum_{T \in \mathcal{T}_{\varepsilon}} \int_T | \partial_i A_{\varepsilon}
    |^2 \mathd x & \lesssim \varepsilon^{- 1}  \left( \sum_{T \in
    \mathcal{T}_{\varepsilon}} \| z_{\varepsilon} \|_{0, 4, T}^4 \right)^{1 /
    2}  \sum_{T \in \mathcal{T}_{\varepsilon}} \| \partial_i z_{\varepsilon}
    \|_{0, 2, T}^2\\
    & = \varepsilon^{- 1}  \| z_{\varepsilon} \|_{0, 4, \omega}^2  \|
    \partial_i z_{\varepsilon} \|^2_{0, 2, \omega} .
  \end{align*}
  The last two norms being uniformly bounded, we conclude:
  \[ \int_{\omega} | \overline{Q}_2^{\varepsilon} [A_{\varepsilon}] - \overline{Q}_2
     [A_{\varepsilon}] | \mathd x \lesssim \sum_{i = 1}^2 \sum_{T \in
     \mathcal{T}_{\varepsilon}} \varepsilon \int_T | \partial_i \overline{Q}_2
     [A_{\varepsilon}] | \mathd x \lesssim \varepsilon^{1 / 2} \rightarrow 0.
  \]
\end{proof}

The second step is, as usual, to ensure that we can focus on smooth functions
for simplicity in the construction of the upper bound:

\begin{lemma}
  \label{lem:curl-free-smooth-functions-dense} The
  set $C^{\infty} (\overline{\omega}, \mathbb{R}^2) \cap Z$ is $W^{1, 2}$-dense in 
  $Z$.
\end{lemma}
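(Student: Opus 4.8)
The plan is to use the identification $Z = \{ \nabla v : v \in W^{2,2}(\omega)\}$, noted above as a consequence of the simple connectedness of $\omega$, and thereby reduce the claim to the density of $C^{\infty}(\overline{\omega})$ in $W^{2,2}(\omega)$. Routing the approximation through a scalar potential is what makes the argument clean: mollifying a given $\zeta \in Z$ directly would neither respect the domain $\omega$ nor obviously preserve the constraint $\tmop{curl}\zeta = 0$ in a boundary strip, whereas gradients of smooth potentials are automatically smooth up to $\partial\omega$ and curl-free.

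First, given $\zeta \in Z$, fix $v \in W^{2,2}(\omega)$ with $\nabla v = \zeta$. Since $\omega$ is polygonal, hence a bounded Lipschitz domain, it satisfies the segment condition, so by the standard density result for Sobolev spaces on such domains (compose a Sobolev extension operator with mollification) there exist $v_k \in C^{\infty}(\overline{\omega})$ with $v_k \to v$ in $W^{2,2}(\omega)$ as $k \to \infty$.

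Set $\zeta_k \assign \nabla v_k$. Then $\zeta_k \in C^{\infty}(\overline{\omega};\mathbb{R}^2)$ and $\tmop{curl}\zeta_k = \partial_1 \partial_2 v_k - \partial_2 \partial_1 v_k = 0$, so $\zeta_k \in C^{\infty}(\overline{\omega};\mathbb{R}^2) \cap Z$; moreover $\| \zeta_k - \zeta \|_{1,2} = \| \nabla v_k - \nabla v \|_{1,2} \le \| v_k - v \|_{2,2} \to 0$. This exhibits $\zeta$ as a $W^{1,2}$-limit of elements of $C^{\infty}(\overline{\omega};\mathbb{R}^2) \cap Z$, proving the asserted density.

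\textbf{Main obstacle.} There is no genuine difficulty here; the only point requiring care is that the statement demands approximants in $C^{\infty}(\overline{\omega};\mathbb{R}^2)$, i.e.\ smoothness up to the boundary and not merely on $\omega$, which is precisely where the Lipschitz (here polygonal) regularity of $\omega$ is used. All other steps are routine.
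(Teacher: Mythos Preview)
Your proof is correct and follows exactly the same approach as the paper: identify $Z$ with $\{\nabla v : v \in W^{2,2}(\omega)\}$ via simple connectedness, then invoke the density of $C^{\infty}(\overline{\omega})$ in $W^{2,2}(\omega)$ and take gradients. The paper states this in one line; your version simply spells out the routine details.
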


\begin{proof}
  This follows from $Z = \{ \nabla v : v \in W^{2, 2} (\omega) \}$ and the density 
  of $C^{\infty} (\overline{\omega})$ in $W^{2, 2} (\omega)$. 
\end{proof}

\begin{theorem}
  \label{thm:discrete-vk-gamma-convergence}Let $J^{\theta},
  J^{\theta}_{\varepsilon}$ be given by {\eqref{eq:vk-curl-energy}} and
  {\eqref{eq:discrete-vk-curl-energy}} respectively. Assume that 
  $\mu_{\varepsilon} \to \infty$ such that 
  $\mu_{\varepsilon} = o (\varepsilon^{- 2})$ as $\varepsilon \rightarrow 0$.
  Then $J^{\theta}_{\varepsilon} \overset{\Gamma}{\rightarrow} J^{\theta}$ as
  $\varepsilon \rightarrow 0$ with respect to weak convergence in $W^{1, 2}$.
\end{theorem}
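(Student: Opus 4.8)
The plan is the classical two-part $\Gamma$-convergence argument, with the three lemmas above supplying exactly what is needed to deal with the quadrature interpolation, the density of smooth curl-free fields, and the interpolation errors.

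\textbf{The $\liminf$ inequality.} Let $(u_\varepsilon, z_\varepsilon) \rightharpoonup (u,z)$ weakly in $W^{1,2}$. We may assume $\liminf_\varepsilon J^\theta_\varepsilon(u_\varepsilon, z_\varepsilon)$ is finite and, after passing to a subsequence attaining it, that $\sup_\varepsilon J^\theta_\varepsilon(u_\varepsilon, z_\varepsilon) < \infty$; in particular $(u_\varepsilon, z_\varepsilon) \in V_\varepsilon^2$. The energy bound forces $\mu_\varepsilon \|\operatorname{curl} z_\varepsilon\|_{0,2}^2 \le C$, hence $\operatorname{curl} z_\varepsilon \to 0$ in $L^2$ because $\mu_\varepsilon \to \infty$; since $\operatorname{curl} z_\varepsilon \rightharpoonup \operatorname{curl} z$ in $L^2$ this gives $z \in Z$, so that $J^\theta(u,z)$ is the finite integral. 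Being bounded in $W^{1,2}$, the pair $(u_\varepsilon, z_\varepsilon)$ is admissible in Lemma \ref{lem:numerical-integration}, which lets us replace $\overline{Q}_2^\varepsilon$ by $\overline{Q}_2$ at the cost of an $L^1$-error tending to $0$. Writing $A_\varepsilon = \big( \theta^{1/2}(\grs u_\varepsilon + \tfrac12 z_\varepsilon \otimes z_\varepsilon), -\nabla z_\varepsilon \big)$, the Rellich embedding $W^{1,2}(\omega) \hookrightarrow\hookrightarrow L^4(\omega)$ (available since $\omega \subset \mathbb{R}^2$) yields $z_\varepsilon \otimes z_\varepsilon \to z \otimes z$ strongly in $L^2$, so that $A_\varepsilon$ converges weakly in $L^2 \times L^2$ to the corresponding $A$. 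As $\overline{Q}_2$ is a convex quadratic, $(G,H) \mapsto \int_\omega \overline{Q}_2[G,H]$ is weakly lower semicontinuous on $L^2 \times L^2$, and discarding the nonnegative penalty term we conclude $\liminf_\varepsilon J^\theta_\varepsilon(u_\varepsilon, z_\varepsilon) \ge \tfrac12 \int_\omega \overline{Q}_2[A] = J^\theta(u,z)$.

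\textbf{The recovery sequence.} It suffices to treat $(u,z)$ with $J^\theta(u,z) < \infty$, i.e.\ $u \in W^{1,2}(\omega;\mathbb{R}^2)$ and $z \in Z$. The functional $J^\theta$ is continuous along strongly $W^{1,2}$-convergent sequences in $W^{1,2}(\omega;\mathbb{R}^2) \times Z$ (again because $z^k \otimes z^k \to z \otimes z$ in $L^2$ via the $L^4$ embedding); combined with Lemma \ref{lem:curl-free-smooth-functions-dense}, the density of $C^\infty(\overline\omega;\mathbb{R}^2)$ in $W^{1,2}$, and the fact that the construction below stays in a bounded subset of $W^{1,2}$ (on which weak convergence is metrizable), a standard diagonalisation reduces the problem to smooth $(u,z)$. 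For such data we put $u_\varepsilon = I_\varepsilon u$, $z_\varepsilon = I_\varepsilon z \in V_\varepsilon$. Standard nodal interpolation estimates give $u_\varepsilon \to u$ and $z_\varepsilon \to z$ strongly in $W^{1,2}$, hence weakly. For the penalty term, $\operatorname{curl} z = 0$ gives $\|\operatorname{curl} z_\varepsilon\|_{0,2} = \|\operatorname{curl}(z_\varepsilon - z)\|_{0,2} \le \|\nabla(z_\varepsilon - z)\|_{0,2} \le C\varepsilon \|D^2 z\|_{0,2}$, so $\mu_\varepsilon \|\operatorname{curl} z_\varepsilon\|_{0,2}^2 \le C \mu_\varepsilon \varepsilon^2 \to 0$, which is precisely where $\mu_\varepsilon = o(\varepsilon^{-2})$ enters. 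For the elastic part, Lemma \ref{lem:numerical-integration} again replaces $\overline{Q}_2^\varepsilon$ by $\overline{Q}_2$ up to a vanishing $L^1$-error, and strong $W^{1,2}$ convergence of $(u_\varepsilon, z_\varepsilon)$ — hence $L^2$ convergence of $\grs u_\varepsilon$, $\nabla z_\varepsilon$ and $z_\varepsilon \otimes z_\varepsilon$ — together with continuity of the quadratic $\overline{Q}_2$ on $L^2 \times L^2$ gives $\tfrac12 \int_\omega \overline{Q}_2^\varepsilon[A_\varepsilon] \to J^\theta(u,z)$. Hence $\limsup_\varepsilon J^\theta_\varepsilon(u_\varepsilon, z_\varepsilon) \le J^\theta(u,z)$.

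\textbf{Main obstacle.} The one genuinely delicate point is the double role of the penalty weight: $\mu_\varepsilon \to \infty$ is indispensable in the $\liminf$ step to drive $\operatorname{curl} z$ to zero, while $\mu_\varepsilon = o(\varepsilon^{-2})$ is exactly the rate compatible with the $O(\varepsilon^2)$ curl defect of a $P_1$-interpolated curl-free field, so that the penalty does not spoil the recovery sequence. Everything else — the quadrature reduction, the weak lower semicontinuity, and the diagonal reduction to smooth data — is routine given Lemmas \ref{lem:local-interpolation-estimate}--\ref{lem:curl-free-smooth-functions-dense} and the two-dimensional compact Sobolev embedding.
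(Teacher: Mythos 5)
Your proof is correct and follows essentially the same approach as the paper: you reduce the quadrature error via the numerical-integration lemma, use the compact embedding $W^{1,2}\hookrightarrow L^4$ to upgrade $z_\varepsilon\otimes z_\varepsilon$ to strong $L^2$ convergence, appeal to weak lower semicontinuity of the convex quadratic $\overline{Q}_2$ and drop the penalty for the lower bound, and construct the recovery sequence by nodal interpolation of smooth data with the curl defect $O(\varepsilon^2)$ playing off against $\mu_\varepsilon=o(\varepsilon^{-2})$. Your treatment of the diagonalisation step (metrizability of weak convergence on bounded sets) is slightly more explicit than the paper's, but the substance and the key lemmas are identical.
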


\begin{proof}
  Because of Lemma \ref{lem:numerical-integration} we can substitute $\overline{Q}_2$ for
  $\overline{Q}_2^{\varepsilon}$ in $J^{\theta}_{\varepsilon}$. Also, by Lemma
  \ref{lem:curl-free-smooth-functions-dense}
  it is enough to consider smooth functions for the upper bound. Set
  \[ A \assign \big( \theta^{1 / 2} ( \grs u + \tfrac{1}{2} z \otimes z ), - \nabla z \big) \text{\quad and\quad}
     A_{\varepsilon} \assign \big( \theta^{1 / 2} ( \grs u_{\varepsilon} + \tfrac{1}{2}
     z_{\varepsilon} \otimes z_{\varepsilon} ) , - \nabla z_{\varepsilon} \big). \]
  {\step{Upper bound.}{Let $(u, z) \in W^{1, 2} (\omega ; \mathbb{R}^2) \times
  Z$ be $C^{\infty}$ up to the boundary and define $u_{\varepsilon} \assign
  I_{\varepsilon} (u), z_{\varepsilon} \assign I_{\varepsilon} (z)$, where
  $I_{\varepsilon}$ is the nodal interpolant of
  {\eqref{eq:nodal-interpolant}}. Note that because $u$ and $z$ are smooth, we
  can apply standard interpolation estimates to show strong convergence in $W^{1, 2}$ of
  these sequences towards $u$ and $z$. By the compact Sobolev embedding $W^{1, 2}
  \hookrightarrow L^4$ we have $z_{\varepsilon} \rightarrow z$ in $L^4$, and
  $z_{\varepsilon} \otimes z_{\varepsilon} \rightarrow z \otimes z$ in $L^2$,
  so we have that $A_{\varepsilon} \rightarrow A$ in $L^2$. Since $\overline{Q}_2$ 
  is a polynomial of degree 2, this implies 
  \begin{align*}
    \int_{\omega} \overline{Q}_2 [A_{\varepsilon}] \mathd x 
    \to \int_{\omega} \overline{Q}_2 [A] \mathd x
  \end{align*}
  as $\varepsilon \rightarrow 0$. 
  By the same interpolation estimate above and the assumption on
  $\mu_{\varepsilon}$ we have that $\mu_{\varepsilon}  \| \tmop{curl}
  (\hat{I}_{\varepsilon} (z) - z) \|^2_{0, 2} = o (1)$ as $\varepsilon
  \rightarrow 0$, and consequently
  \[ J^{\theta}_{\varepsilon} (u_{\varepsilon}, z_{\varepsilon}) \rightarrow
     J^{\theta} (u, z) . \]}}
  
  {\step{Lower bound.}{Let $u_{\varepsilon}, z_{\varepsilon} \in
  V_{\varepsilon} \subset W^{1, 2}$ with $u_{\varepsilon} \rightharpoonup u$, and
  $z_{\varepsilon} \rightharpoonup z$ weakly in $W^{1, 2}$ to $u \in W^{1, 2}
  (\omega ; \mathbb{R}^2), z \in Z$. Because
  $z_{\varepsilon} \otimes z_{\varepsilon} \rightarrow z \otimes z$ in $L^2$,
  we have that $A_{\varepsilon} \rightharpoonup A$ in $L^2$. Moreover, 
  $\tmop{curl} z_{\varepsilon} \rightharpoonup \tmop{curl} z$. If 
  $\underset{\varepsilon \rightarrow 0}{\tmop{linf}} J^{\theta}_{\varepsilon} = 
  \infty$, the assertion is trivial. If not, then $\mu_{\varepsilon_k} 
  \int_{\omega} | \tmop{curl} z_{\varepsilon_k} |^2 \mathd x \leqslant C$ 
  and $\| \tmop{curl} z_{\varepsilon_k} \|_{0, 2} \rightarrow 0$ for a 
  subsequence $\varepsilon_k \rightarrow 0$. But then $\tmop{curl} z = 0$. 
  Dropping the (non-negative) $\tmop{curl}$ term in $J^{\theta}_{\varepsilon}$ and by the
  weak sequential lower semicontinuity of all integrands involved ($\overline{Q}_2$ 
  being a convex quadratic function), we then get 
  \begin{align*}
    \underset{\varepsilon \rightarrow 0}{\tmop{linf}} J^{\theta}_{\varepsilon}
    (u_{\varepsilon}, z_{\varepsilon}) 
    \geqslant 
    \int_{\omega} \overline{Q}_2 [A] \mathd x 
    = J^{\theta} (u, z).
  \end{align*}}}
\end{proof}

The final ingredient of this subsection is a proof that sequences with bounded
energy are (weakly) precompact. The fundamental theorem of
$\Gamma$-convergence then shows convergence of global minimisers.
In order for this to work, we need to assume
conditions in the space which provide Korn and Poincar{\'e} inequalities. We
can do this using functions with zero mean, zero mean of the gradient or zero
mean of the antisymmetric gradient as we do above,
but including these conditions in the discrete spaces is not entirely trivial.
Because the energies are invariant under the transformations which are
factored out by taking quotient spaces as described in the sections mentioned,
it is enough for our purposes to claim compactness modulo these
transformations and to exclude them in the implementation via projected
gradient descent.

\begin{theorem}[Compactness]
  \label{thm:discrete-vk-compactness}Let $(u_{\varepsilon},
  z_{\varepsilon})_{\varepsilon > 0}$ be a sequence in $(V_{\varepsilon} \cap
  X_u)^2$ with bounded energy. Then there exist $u \in W^{1, 2}, z \in Z$ such
  that $u_{\varepsilon} \rightharpoonup u$ and $z_{\varepsilon}
  \rightharpoonup z$. in $W^{1, 2}$.
\end{theorem}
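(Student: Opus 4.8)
The plan is to extract from the energy bound a uniform estimate $\|u_\varepsilon\|_{1,2}+\|z_\varepsilon\|_{1,2}\le C$; weak precompactness in $W^{1,2}$ and the identification $z\in Z$ then follow at once. Note that Lemma~\ref{lem:numerical-integration} cannot be invoked here, since it already presupposes such a bound. Instead I would start from the representation \eqref{eq:Q2bar} of $\overline{Q}_2$ together with the elementary fact that on each $T\in\mathcal{T}_\varepsilon$ the element-wise nodal interpolant $\hat I_\varepsilon$ returns a convex combination of the three nodal values, so that $\int_T\hat I_\varepsilon(g)\,\mathd x=\tfrac{|T|}{3}\sum_{z\in\mathcal{N}_\varepsilon\cap T}g(z)$ and $\hat I_\varepsilon$ preserves pointwise nonnegativity. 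Setting $A_\varepsilon=(\theta^{1/2}(\grs u_\varepsilon+\tfrac{1}{2} z_\varepsilon\otimes z_\varepsilon),-\nabla z_\varepsilon)$, applying Jensen's inequality to the convex quadratic $\overline{Q}_2$ on each $T$, and using that $\grs u_\varepsilon$ and $\nabla z_\varepsilon$ are piecewise constant, one obtains
\[
  \int_\omega\overline{Q}_2^{\varepsilon}[A_\varepsilon]\,\mathd x
  \ \geq\ \gamma|\omega|
  +\int_\omega Q_2^{\ast}\big(-\nabla z_\varepsilon-F_0\big)\,\mathd x
  +\sum_{T\in\mathcal{T}_\varepsilon}|T|\,Q_2^{0}\big(G_\varepsilon|_T\big),
\]
where $G_\varepsilon|_T:=\theta^{1/2}\big(\grs u_\varepsilon|_T+\tfrac{1}{2}\,\overline{z\otimes z}_T\big)-\mathcal{L}_0^{-1}\mathcal{L}_1(-\nabla z_\varepsilon|_T)-E_0$ and $\overline{z\otimes z}_T$ is the average of $z_\varepsilon\otimes z_\varepsilon$ over the three vertices of $T$. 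Since $Q_2^{0},Q_2^{\ast}\ge 0$ and $\gamma$ is a fixed constant, the assumed energy bound (after discarding the nonnegative $\tmop{curl}$ penalty) forces each of the three terms on the right to be bounded above uniformly in $\varepsilon$.

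From the second term, positive definiteness of $Q_2^{\ast}$ on symmetric matrices together with Young's inequality gives $\|(\nabla z_\varepsilon)_{\tmop{sym}}\|_{0,2}\le C$; since Korn's second inequality and Poincar\'e's inequality hold on all of $W^{1,2}(\omega;\mathbb{R}^2)$ with constants independent of $\varepsilon$ and $z_\varepsilon\in X_u$, this upgrades to $\|z_\varepsilon\|_{1,2}\le C$, hence $\|z_\varepsilon\|_{0,4}\le C$ by the Sobolev embedding. For the third term I would push the nonlinear part into this bound: as $P_1(T)$ is unisolvent with respect to vertex values, $p\mapsto\big(\tfrac13\sum_j|p(z_j)|^4\big)^{1/4}$ is a norm on $P_1(T)$, comparable by an affine change of variables and shape-regularity to $|T|^{-1/4}\|p\|_{0,4,T}$, so $\sum_T|T|\,|\overline{z\otimes z}_T|^2\lesssim\sum_T\sum_j|T|\,|z_\varepsilon(z_j)|^4\lesssim\|z_\varepsilon\|_{0,4,\omega}^4\le C$. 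Using coercivity of $Q_2^{0}$ on symmetric matrices, the inequality $|a+b|^2\ge\tfrac12|a|^2-|b|^2$, and the bounds already obtained for $\|\nabla z_\varepsilon\|_{0,2}$ and $\sum_T|T||\overline{z\otimes z}_T|^2$, the third term is bounded below by $c\,\theta\,\|\grs u_\varepsilon\|_{0,2}^2-C$, so $\|\grs u_\varepsilon\|_{0,2}\le C$ and, by Korn and Poincar\'e once more ($u_\varepsilon\in X_u$), $\|u_\varepsilon\|_{1,2}\le C$. Passing to a subsequence gives $u_\varepsilon\rightharpoonup u$, $z_\varepsilon\rightharpoonup z$ in $W^{1,2}$; moreover the energy bound gives $\mu_\varepsilon\|\tmop{curl} z_\varepsilon\|_{0,2}^2\le C$, so $\mu_\varepsilon\to\infty$ forces $\tmop{curl} z_\varepsilon\to 0$ in $L^2$, and since $\tmop{curl}\colon W^{1,2}\to L^2$ is bounded, hence weakly continuous, $\tmop{curl} z_\varepsilon\rightharpoonup\tmop{curl} z$, whence $\tmop{curl} z=0$, i.e.\ $z\in Z$.

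The hard part is precisely the interaction of the interpolation operator $\hat I_\varepsilon$ with the quadratic nonlinearity $z_\varepsilon\otimes z_\varepsilon$: the underlying vertex quadrature is not coercive on piecewise $P_2$ functions, so one cannot simply replace $\overline{Q}_2^{\varepsilon}$ by the exact integrand as in Lemma~\ref{lem:numerical-integration}. The device that makes the estimate go through is the Jensen reduction above, which isolates a genuinely coercive, piecewise constant $Q_2^{\ast}$-contribution (controlling $z_\varepsilon$) and leaves a remainder that can be absorbed into the a priori $L^4$-bound on $z_\varepsilon$.
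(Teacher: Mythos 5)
Your proof is correct, and it follows a genuinely different route from the paper's. The paper establishes the $z_\varepsilon$-bound the same way (coercivity of $\overline{Q}_2$ in the second argument, dropping the nonnegative penalty, then Korn--Poincar\'e), but for the $u_\varepsilon$-bound it proceeds by a pointwise algebraic inequality $|\grs u_\varepsilon|^2 \lesssim \overline{Q}_2[A_\varepsilon] + |z_\varepsilon|^4 + C$, pushes this through $\hat I_\varepsilon$ using its monotonicity and the fact that $\grs u_\varepsilon$ is piecewise constant, and then needs an auxiliary interpolation-error estimate $\|\hat I_\varepsilon(|z_\varepsilon|^4) - |z_\varepsilon|^4\|_{0,1} = \mathcal{O}(\varepsilon)$ (a chain-rule plus H\"older argument invoking Lemma~\ref{lem:local-interpolation-estimate}) to control $\int \hat I_\varepsilon(|z_\varepsilon|^4)$. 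You instead pass the interpolant \emph{inside} $\overline{Q}_2$ via Jensen's inequality on each element, which cleanly reduces the problem to the exact decomposition \eqref{eq:Q2bar} evaluated at vertex averages, and then control $\sum_T |T|\,|\overline{z\otimes z}_T|^2$ by the norm equivalence between the vertex-value $\ell^4$ quasinorm and the $L^4(T)$ norm on $P_1(T)$. Your Jensen reduction bypasses the paper's interpolation-error lemma entirely and makes the coercivity structure more transparent, at the cost of relying explicitly on the representation \eqref{eq:Q2bar}; the paper's argument is more self-contained in that it only needs the abstract lower bound $\overline{Q}_2[E,F] \ge \bar c(|E_{\tmop{sym}}|^2 + |F_{\tmop{sym}}|^2) - \bar C$. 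One small slip: since $\overline{Q}_2$ is extended to general matrices via $\overline{Q}_2[E,F] = \overline{Q}_2[E_{\tmop{sym}}, F_{\tmop{sym}}]$ and $-\nabla z_\varepsilon$ is not symmetric, the $Q_2^\ast$-term in your decomposition should read $Q_2^\ast\bigl((-\nabla z_\varepsilon)_{\tmop{sym}} - F_0\bigr)$ and $G_\varepsilon|_T$ should contain $(-\nabla z_\varepsilon|_T)_{\tmop{sym}}$; this is consistent with the fact that you only obtain a bound on $\|(\nabla z_\varepsilon)_{\tmop{sym}}\|_{0,2}$ before invoking Korn, so the conclusion is unaffected.
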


\begin{proof}
  As above, let $A_{\varepsilon} \assign \big( \theta^{1 / 2} ( \grs u_{\varepsilon} 
  + \frac{1}{2} z_{\varepsilon} \otimes z_{\varepsilon}), - \nabla z_{\varepsilon} \big)$. 
  Note that we cannot use Lemma \ref{lem:numerical-integration}
  to substitute $Q_2$ for $Q_2^{\varepsilon}$ since we do not have uniform
  bounds in $W^{1, 2}$ by assumption, so we work directly with
  $J^{\theta}_{\varepsilon}$.
  
  We begin by observing that, as $\overline{Q}_2 : \mathbb{R}^{2 \times 2} \times 
  \mathbb{R}^{2 \times 2}_{\tmop{sym}} \to \mathbb{R}$ is a convex quadratic function bounded 
  from below which is strictly convex on $\mathbb{R}^{2 \times 2}_{\tmop{sym}} 
  \times \mathbb{R}^{2 \times 2}_{\tmop{sym}}$, there are constants $\bar{c}, \bar{C} > 0$ such that 
  \[ \overline{Q}_2[E, F] \ge \bar{c}|E_{\tmop{sym}}|^2 + \bar{c}|F_{\tmop{sym}}|^2 - \bar{C} \] 
  for all $E, F \in \mathbb{R}^{2 \times 2}$. In particular, 
  \[ \bar{c} \| \nabla z_{\varepsilon} \|_{0, 2}^2 
     \leqslant J^{\theta}_{\varepsilon}(u_{\varepsilon}, z_{\varepsilon}) 
     + \bar{C} |\omega|, \]
  and consequently, by Poincar{\'e}'s inequality:
  \begin{equation}
  \label{eq:compactness-fem-z}
  \| z_{\varepsilon} \|_{1,2} \leqslant C.
  \end{equation}
  
  We have then a subsequence (not relabeled) weakly converging in $W^{1, 2}$
  to some $z \in W^{1, 2}$. In particular $\nabla z_{\varepsilon}
  \rightharpoonup \nabla z$ and $\tmop{curl} z_{\varepsilon} \rightharpoonup
  \tmop{curl} z$ in $L^2$. But also
  \[ \mu_{\varepsilon}  \| \tmop{curl} z_{\varepsilon} \|_{0, 2}^2 \leqslant
     C \Rightarrow \tmop{curl} z_{\varepsilon} \rightarrow 0 \text{ in } L^2,
  \]
  and therefore $\tmop{curl} z = 0$, i.e.\ $z \in Z$.
  
  Now, for the sequence $u_{\varepsilon}$ we must work with
  $\overline{Q}_2^{\varepsilon}$ instead. First write
  \begin{align*}
    \bar{c} | \theta^{1 / 2} \grs u_{\varepsilon} |^2 
    &\leqslant 2 \bar{c} | \theta^{1 / 2} ( \grs u_{\varepsilon}
         + \tfrac{1}{2} z_{\varepsilon} \otimes z_{\varepsilon} ) |^2 
         + 2 \bar{c} | \theta^{1 / 2} \tfrac{1}{2} z_{\varepsilon} \otimes z_{\varepsilon} |^2 \\ 
    &\leqslant 2 \overline{Q}_2[A_{\varepsilon}] + 2 \bar{C} 
         + \tfrac{1}{2} \bar{c} \theta | z_{\varepsilon} \otimes z_{\varepsilon} |^2 
  \end{align*}
  and thus 
  \begin{align*}
    | \grs u_{\varepsilon} |^2 
    \lesssim \overline{Q}_2[A_{\varepsilon}] + | z_{\varepsilon} |^4 + C. 
  \end{align*}
  Since this applies pointwise, after (local) interpolation the estimate still
  holds:
  \[ | \grs u_{\varepsilon} | 
     = \hat{I}_{\varepsilon} | \grs u_{\varepsilon} | 
     \lesssim \overline{Q}^{\varepsilon}_2 [A_{\varepsilon}] + \hat{I}_{\varepsilon} (|
     z_{\varepsilon} |^4) + C, \]
  where in the firs step we have used that $\grs u_{\varepsilon}$ is piecewise constant. So 
  \[ \| \grs u_{\varepsilon} \|_{0,2} 
     \lesssim J^{\theta}_{\varepsilon}
     (u_{\varepsilon}, z_{\varepsilon}) + \int_{\omega} \hat{I}_{\varepsilon}
     (| z_{\varepsilon} |^4) \mathd x + C. \]

  We claim now that $\| \hat{I}_{\varepsilon} (| z_{\varepsilon} |^4) - |
  z_{\varepsilon} |^4 \|_{0, 1} =\mathcal{O} (\varepsilon)$. Indeed, by the
  local interpolation estimate (Lemma \ref{lem:local-interpolation-estimate})
  and H{\"o}lder's inequality for integrals and for sums:
  \begin{align*}
    \int_{\omega} | \hat{I}_{\varepsilon} (| z_{\varepsilon} |^4) - |
    z_{\varepsilon} |^4 | & \lesssim \varepsilon \sum_{T \in
    \mathcal{T}_{\varepsilon}} \int_T | \nabla | z_{\varepsilon} |^4 |\\
    & \lesssim \varepsilon \sum_{T \in \mathcal{T}_{\varepsilon}} \int_T |
    z_{\varepsilon} |^3  | \nabla z_{\varepsilon} |\\
    & \lesssim \varepsilon \sum_{T \in \mathcal{T}_{\varepsilon}} \|
    z_{\varepsilon} \|_{0, 6, T}^3  \| \nabla z_{\varepsilon} \|_{0, 2, T}\\
    & \lesssim \varepsilon \left( \sum_{T \in \mathcal{T}_{\varepsilon}} \|
    z_{\varepsilon} \|_{0, 6, T}^6 \right)^{1 / 2}  \left( \sum_{T \in
    \mathcal{T}_{\varepsilon}} \| \nabla z_{\varepsilon} \|^2_{0, 2, T}
    \right)^{1 / 2}\\
    & \lesssim \varepsilon \| z_{\varepsilon} \|^3_{0, 6, \omega}  \|
    \nabla z_{\varepsilon} \|_{0, 2, \omega},
  \end{align*}
  and this goes to zero as $\varepsilon \rightarrow 0$ by
  {\eqref{eq:compactness-fem-z}}. But then $\int_{\omega}
  \hat{I}_{\varepsilon} (| z_{\varepsilon} |^4) \leqslant C$ and by 
  Korn-Poincar{\'e}'s inequality, the Sobolev embedding $W^{1, 2} \hookrightarrow
  L^4$ and the previous bound, we have
  \[ \| u_{\varepsilon} \|^2_{1, 2} \lesssim \left\| \grs u_{\varepsilon}
     \right\|^2_{0, 2} 
     \lesssim J^{\theta}_{\varepsilon} (u_{\varepsilon}, z_{\varepsilon}) + C
     \leqslant C. \]
  The sequence $(u_{\varepsilon})_{\varepsilon > 0}$ is therefore also weakly
  precompact in $W^{1, 2} (\omega ; \mathbb{R}^2)$ and the proof is complete.
\end{proof}

\subsection{Discrete gradient flow}

As a concrete example we specialize now to the prototypical example 
\[ \mathcal{I}^{\theta}_{\rm vK} (u, v) = \frac{\theta}{2} 
   \int_{\omega} Q_2 (\grs u + \tfrac{1}{2} \nabla v \otimes \nabla v) \mathd
   x + \frac{1}{24} \int_{\omega} Q_2 (\nabla^2 v - I) \mathd x, \]
cf.\ \eqref{eq:energy-btI}.
For each discrete problem, we compute local minimisers using gradient descent,
for which the basic result is the following (see
{\cite[{\textsection}4.3.1]{bartels_numerical_2015}}):

\begin{theorem}[Projected gradient descent]
  Let $V_{\varepsilon}$ and $J^{\theta}_{\varepsilon}$ be given as in Problem
  \ref{prob:discrete-vk} and let $(\cdot, \cdot)$ be the scalar product on
  $V_{\varepsilon}$. The map $F_{\varepsilon} : V_{\varepsilon} \times
  V_{\varepsilon} \rightarrow (V_{\varepsilon} \times V_{\varepsilon})'$ given
  by
  \begin{align}
    F^{\theta}_{\varepsilon} [u_{\varepsilon}, z_{\varepsilon}]
    (\varphi_{\varepsilon}, \psi_{\varepsilon}) & \assign  \theta
    \int_{\omega} Q^{\varepsilon}_2 [\grs u_{\varepsilon} + \tfrac{1}{2}
    z_{\varepsilon} \otimes z_{\varepsilon}, \grs \varphi_{\varepsilon} +
    (z_{\varepsilon} \otimes \psi)_s] \mathd x \nonumber\\
    &  \qquad + \frac{1}{12}  \int_{\omega} Q_2 [\nabla z_{\varepsilon} - I,
    \nabla \psi_{\varepsilon}] \mathd x \nonumber\\
    &  \qquad + 2 \mu_{\varepsilon}  \int_{\omega} \tmop{curl}
    z_{\varepsilon} \tmop{curl} \psi_{\varepsilon} \mathd x, 
    \label{eq:discrete-vk-curl-gradient}
  \end{align}
  is the Fr{\'e}chet derivative of $J^{\theta}_{\varepsilon}$. Let $\pi_u :
  V_{\varepsilon}^2 \rightarrow (V_{\varepsilon} \cap X_u)^2$ be the linear
  orthogonal projection onto its image. The sequence defined as
  \[ w^{j + 1}_{\varepsilon} \assign w^j_{\varepsilon} + \alpha_j \pi_u
     d_{\varepsilon}^j \]
  with $w_{\varepsilon}^0 = (u_{\varepsilon}^0, v_{\varepsilon}^0) \in
  (V_{\varepsilon} \cap X_u)^2$ and $d_{\varepsilon}^j \in V_{\varepsilon}
  \times V_{\varepsilon}$ such that
  \begin{equation}
    \label{eq:discrete-gradient-descent} (d^j_{\varepsilon},
    \xi_{\varepsilon}) = - F^{\theta}_{\varepsilon} [w^j_{\varepsilon}]
    (\xi_{\varepsilon}) \text{ for all } \xi_{\varepsilon} \in V_{\varepsilon}
    \times V_{\varepsilon},
  \end{equation}
  and $\alpha_j$ determined with line search is energy decreasing. A line
  search means computing the maximal $\alpha_j \in \{ 2^{- k} : k \in
  \mathbb{N} \}$ such that
  \[ J^{\theta}_{\varepsilon} (w^j_{\varepsilon} + \alpha_j \pi_u
     d_{\varepsilon}^j) \leqslant J^{\theta}_{\varepsilon} (w^j_{\varepsilon})
     - \rho \alpha_j  \| \pi_u d^j_{\varepsilon} \|^2_2, \]
  where $\rho \in (0, 1 / 2)$ is the proverbial fudge factor.
\end{theorem}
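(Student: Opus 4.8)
The statement bundles three assertions: that $F^{\theta}_{\varepsilon}$ is the Fr\'echet derivative of $J^{\theta}_{\varepsilon}$, that the descent direction $d^j_{\varepsilon}$ of \eqref{eq:discrete-gradient-descent} is well defined, and that the Armijo line search terminates with an energy-decreasing update. The plan is to handle them in this order; everything is elementary once the finite-dimensional, polynomial structure of the discrete energy is exploited.

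\textbf{The derivative.} I would first record that, restricted to $V_{\varepsilon}^2$, the functional $J^{\theta}_{\varepsilon}$ is a \emph{polynomial} in the nodal values of $(u_{\varepsilon},z_{\varepsilon})$: the forms $Q_2$ and $\overline{Q}_2$ are quadratic, $z_{\varepsilon}\otimes z_{\varepsilon}$ is piecewise quadratic, and the element-wise nodal interpolant $\hat{I}_{\varepsilon}$ of \eqref{eq:elementwise-nodal-interpolant} is linear and assembled from point evaluations at nodes, so composing $Q_2$ with it again yields a piecewise polynomial in the coefficients. Hence $J^{\theta}_{\varepsilon}$ is $C^{\infty}$ on $V_{\varepsilon}^2$ and it suffices to compute the G\^ateaux derivative, which is then automatically the Fr\'echet derivative (and a bounded linear functional, since we are in finite dimensions). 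I would differentiate term by term. For the membrane term I use that differentiation commutes with $\hat I_{\varepsilon}$, i.e.\ $\tfrac{\mathrm{d}}{\mathrm{d}t}\big|_{t=0}(\hat I_{\varepsilon}\circ Q_2)(A+tB)=\hat I_{\varepsilon}\big(2\,Q_2[A,B]\big)$, together with $\tfrac{\mathrm{d}}{\mathrm{d}t}\big|_{t=0}\tfrac12(z_{\varepsilon}+t\psi_{\varepsilon})\otimes(z_{\varepsilon}+t\psi_{\varepsilon})=(z_{\varepsilon}\otimes\psi_{\varepsilon})_{s}$; this reproduces the first line of \eqref{eq:discrete-vk-curl-gradient}. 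Differentiating $\tfrac1{24}\int_{\omega}Q_2(\nabla z_{\varepsilon}-I)$ gives $\tfrac1{12}\int_{\omega}Q_2[\nabla z_{\varepsilon}-I,\nabla\psi_{\varepsilon}]$, and differentiating $\mu_{\varepsilon}\int_{\omega}|\tmop{curl}z_{\varepsilon}|^2$ gives $2\mu_{\varepsilon}\int_{\omega}\tmop{curl}z_{\varepsilon}\,\tmop{curl}\psi_{\varepsilon}$; summing yields exactly $F^{\theta}_{\varepsilon}[u_{\varepsilon},z_{\varepsilon}]$.

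\textbf{Descent direction and energy decrease.} Since $V_{\varepsilon}\times V_{\varepsilon}$ equipped with $(\cdot,\cdot)$ is a finite-dimensional inner product space, Riesz representation makes $d^j_{\varepsilon}$ in \eqref{eq:discrete-gradient-descent} well defined and unique. Testing \eqref{eq:discrete-gradient-descent} with $\xi_{\varepsilon}=\pi_u d^j_{\varepsilon}$ and using that $\pi_u$ is self-adjoint and idempotent, so $(d^j_{\varepsilon},\pi_u d^j_{\varepsilon})=(d^j_{\varepsilon},\pi_u(\pi_u d^j_{\varepsilon}))=\|\pi_u d^j_{\varepsilon}\|_2^2$, I get $F^{\theta}_{\varepsilon}[w^j_{\varepsilon}](\pi_u d^j_{\varepsilon})=-\|\pi_u d^j_{\varepsilon}\|_2^2\le 0$, i.e.\ $\pi_u d^j_{\varepsilon}$ is a descent direction. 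Admissibility is preserved by induction: $w^0_{\varepsilon}\in(V_{\varepsilon}\cap X_u)^2$ and each $\pi_u d^j_{\varepsilon}$ lies in the linear subspace $(V_{\varepsilon}\cap X_u)^2$, hence so does every $w^j_{\varepsilon}$. For the line search, a second-order Taylor expansion of the smooth map along the ray gives $J^{\theta}_{\varepsilon}(w^j_{\varepsilon}+\alpha\pi_u d^j_{\varepsilon})=J^{\theta}_{\varepsilon}(w^j_{\varepsilon})-\alpha\|\pi_u d^j_{\varepsilon}\|_2^2+O(\alpha^2)$; since $\rho<1$ the Armijo inequality $J^{\theta}_{\varepsilon}(w^j_{\varepsilon}+\alpha\pi_u d^j_{\varepsilon})\le J^{\theta}_{\varepsilon}(w^j_{\varepsilon})-\rho\alpha\|\pi_u d^j_{\varepsilon}\|_2^2$ holds for all sufficiently small $\alpha>0$, in particular for some dyadic $\alpha=2^{-k}$, so the maximal admissible $\alpha_j$ exists, and the inequality for this $\alpha_j$ shows $J^{\theta}_{\varepsilon}(w^{j+1}_{\varepsilon})\le J^{\theta}_{\varepsilon}(w^j_{\varepsilon})$, with equality exactly when $\pi_u d^j_{\varepsilon}=0$, i.e.\ at a critical point of the constrained problem.

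\textbf{Main obstacle.} There is no serious difficulty here: the only points requiring a little care are making precise that differentiation commutes with the element-wise nodal interpolant $\hat I_{\varepsilon}$ (equivalently, that $\hat I_{\varepsilon}\circ Q_2$ has derivative $B\mapsto\hat I_{\varepsilon}(2\,Q_2[\,\cdot\,,B])$) and keeping the symmetrisations and numerical prefactors consistent; the Taylor remainder in the line-search step needs no uniformity argument because $J^{\theta}_{\varepsilon}$ is a polynomial on a finite-dimensional space.
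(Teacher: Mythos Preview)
Your proof is correct and follows essentially the same approach as the paper's: both compute the derivative directly, invoke the self-adjointness and idempotence of $\pi_u$ to show $F^{\theta}_{\varepsilon}[w^j_{\varepsilon}](\pi_u d^j_{\varepsilon})=-\|\pi_u d^j_{\varepsilon}\|_2^2$, and use a second-order Taylor expansion (relying on $J^{\theta}_{\varepsilon}\in C^2$) for the line-search termination. You have supplied more detail than the paper --- the polynomial/finite-dimensional justification for smoothness, the commutation of differentiation with $\hat I_{\varepsilon}$, Riesz representation for $d^j_{\varepsilon}$, and preservation of admissibility --- but these only make explicit what the paper leaves to the reader.
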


\begin{proof}
  The computation of $F^{\theta}_{\varepsilon}$ is straightforward.
  To see that the iteration is energy decreasing use
  {\eqref{eq:discrete-gradient-descent}} and the self-adjointness of $\pi_u =
  \pi^2_u$ to compute
  \[ \left. \frac{\mathd}{\mathd \alpha} \right|_{\alpha = 0} J^{\theta}_{\varepsilon}
     (w^j_{\varepsilon} + \alpha \pi_u d_{\varepsilon}^j) =
     F^{\theta}_{\varepsilon} [w^j_{\varepsilon}] (\pi_u d_{\varepsilon}^j) =
     - (\pi_u d^j_{\varepsilon}, \pi_u d^j_{\varepsilon}) \leqslant 0. \]
  The existence of $\alpha_j > 0$ is guaranteed as long as
  $J^{\theta}_{\varepsilon} \in C^2 (V_{\varepsilon}^2)$ because then we can
  perform a Taylor expansion and use again
  {\eqref{eq:discrete-gradient-descent}}:
  \[ J^{\theta}_{\varepsilon} (w^j_{\varepsilon} + \alpha_j \pi_u
     d_{\varepsilon}^j) = J^{\theta}_{\varepsilon} (w^j_{\varepsilon}) -
     \alpha_j  \| \pi_u d^j_{\varepsilon} \|^2_{\mathcal{S}} +\mathcal{O}
     (\alpha_j^2) . \]
\end{proof}

\begin{remark}[Caveat: local and global minimisers]
  Even though we now know that the discrete energies correctly approximate
  the continuous one, as well as any global minimisers, gradient descent on
  each discrete problem is only guaranteed to converge to some local minimiser
  $w^{\star}_{\varepsilon}$. Lacking some means of tracking a particular
  $w^{\star}_{\varepsilon}$ as $\varepsilon \rightarrow 0$, there is not much
  one can do to prove that our method actually approximates the true global
  minimisers of $\mathcal{I}_{\tmop{vK}}^{\theta}$. Unless $\theta \ll 1$, in
  which case we know local minimisers to be global (cf. Theorem
  \ref{thm:loc-min-are-glob}).
\end{remark}

\subsection{Experimental results}

For the implementation of the discretisation detailed above, we employ the
{\tmname{FEniCS}} library {\cite{alnaes_fenics_2015}} in its version 2017.1.0.
The code is available at {\cite{debenitodelgado_implementation_2017a}} and
includes the model, parallel execution, experiment tracking using
{\tmname{Sacred}} {\cite{greff_sacred_2017}} with {\tmname{MongoDB}} as a
backend and exploration of results with {\tmname{Jupyter}}
{\cite{kluyver_jupyter_2016}} notebooks, {\tmname{Omniboard}}
{\cite{subramanian_webbased_2018}} and a custom application. Everything is
packaged using {\tmname{docker-compose}} for simple reproduction of the
results and one-line deployment.

We set $\omega = \hat{B}_1 (0)$, a (coarse) polygonal approximation of the
unit disc and test several initial conditions. The space $V_{\varepsilon}$ has
$\sim$7000 dofs. We implement a general $Q_2$ for isotropic homogeneous material
with the two (scaled) Lam{\'e} constants set to those of steel at standard
conditions. We apply neither body forces nor boundary conditions, but hold one
interior cell to fix the value of the free constants. We compute minimisers
for increasing values of $\theta$ and $\mu_{\varepsilon} \sim 1 /
\sqrt{\varepsilon}$ via projected gradient descent (onto the space of
admissible functions $V_{\varepsilon} \cap X_u$) and examine the symmetry of
the final solution. The choice $\varepsilon^{- 1 / 2}$ has shown to provide
the fastest convergence results while keeping the violation of the constraint
in the order of $10^{- 4}$ (higher penalties have the expected effect of
adversely affecting convergence). We track two magnitudes as measures of
symmetry: on the one hand we compute the mean bending strain over the domain
and on the other, as a second simple proxy we employ the quotient of the
lengths of the principal axes.

The first initial configuration is the trivial deformation $y^0_{\varepsilon}
= 0$. Note that because the model is prestrained, the ground state is
non-trivial and the plate ``wants'' to reach a lower energy state. In Figure
\ref{fig:minimisers-zero} we depict the results of running the energy
minimisation procedure for multiple values of $\theta$.

\begin{figure}[h]
  \centering
  \begin{tabular}{ll}
    \resizebox{0.25\columnwidth}{!}{\includegraphics{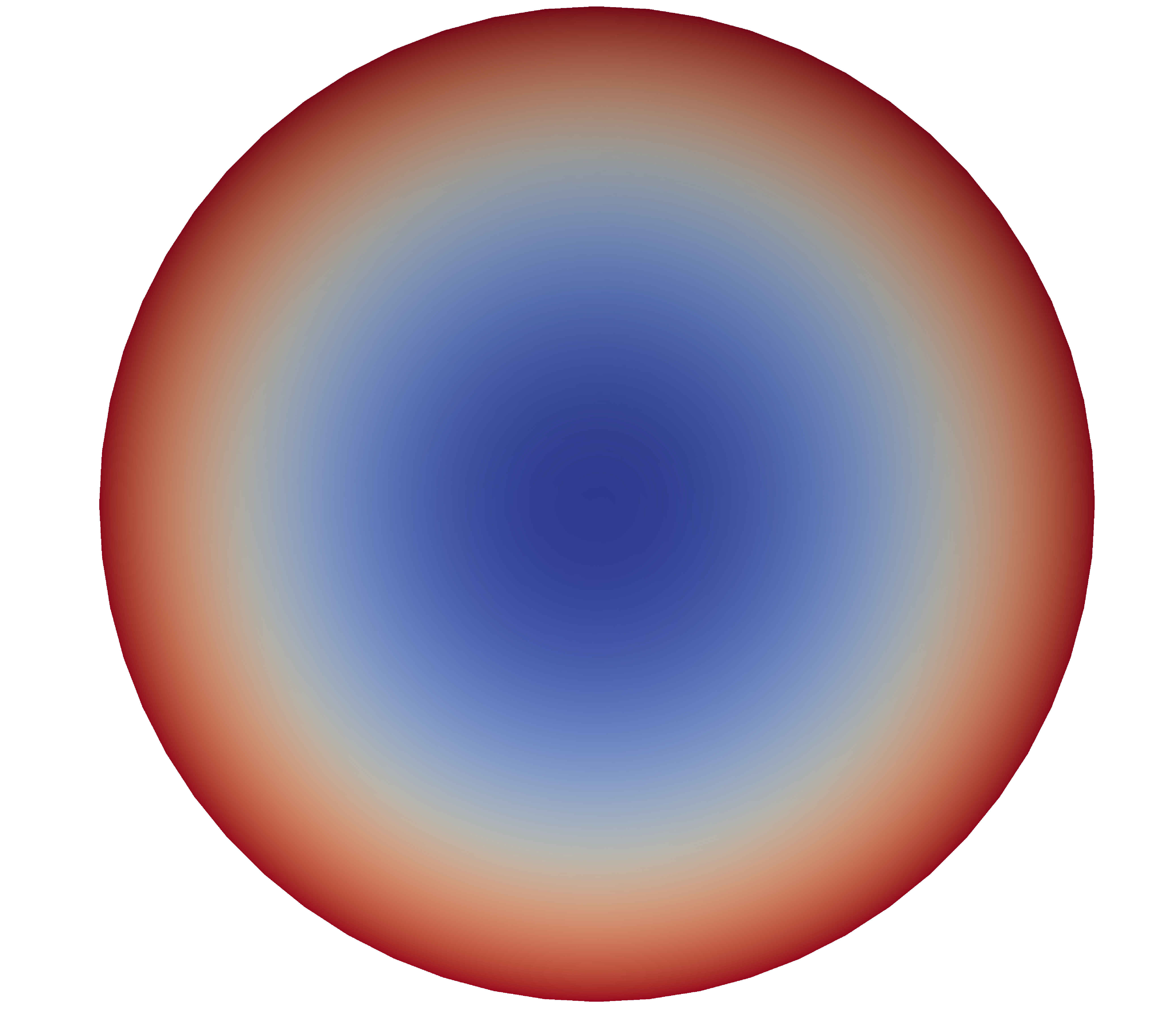}}
    &
    \resizebox{0.25\columnwidth}{!}{\includegraphics{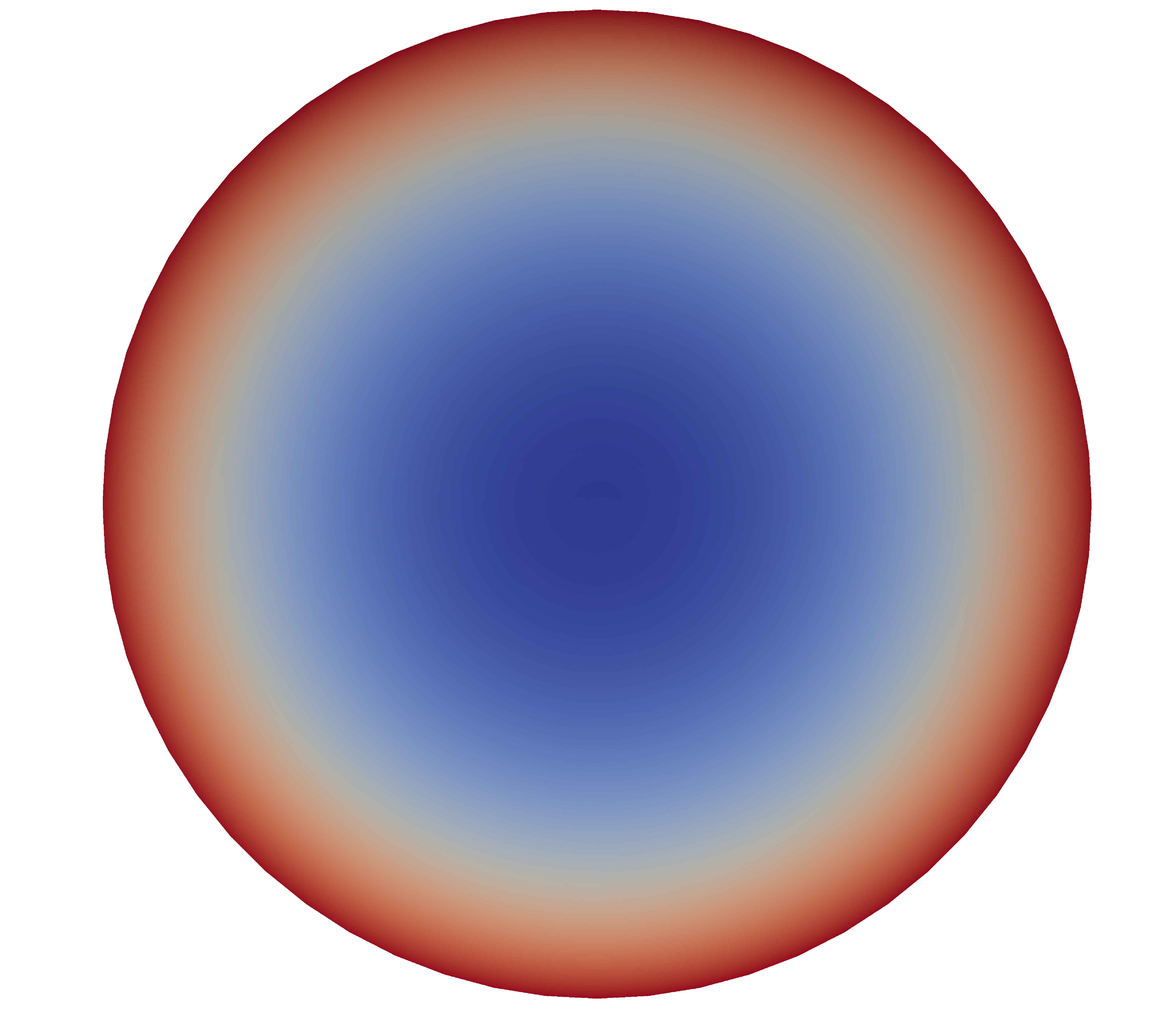}}\\
    \resizebox{0.25\columnwidth}{!}{\includegraphics{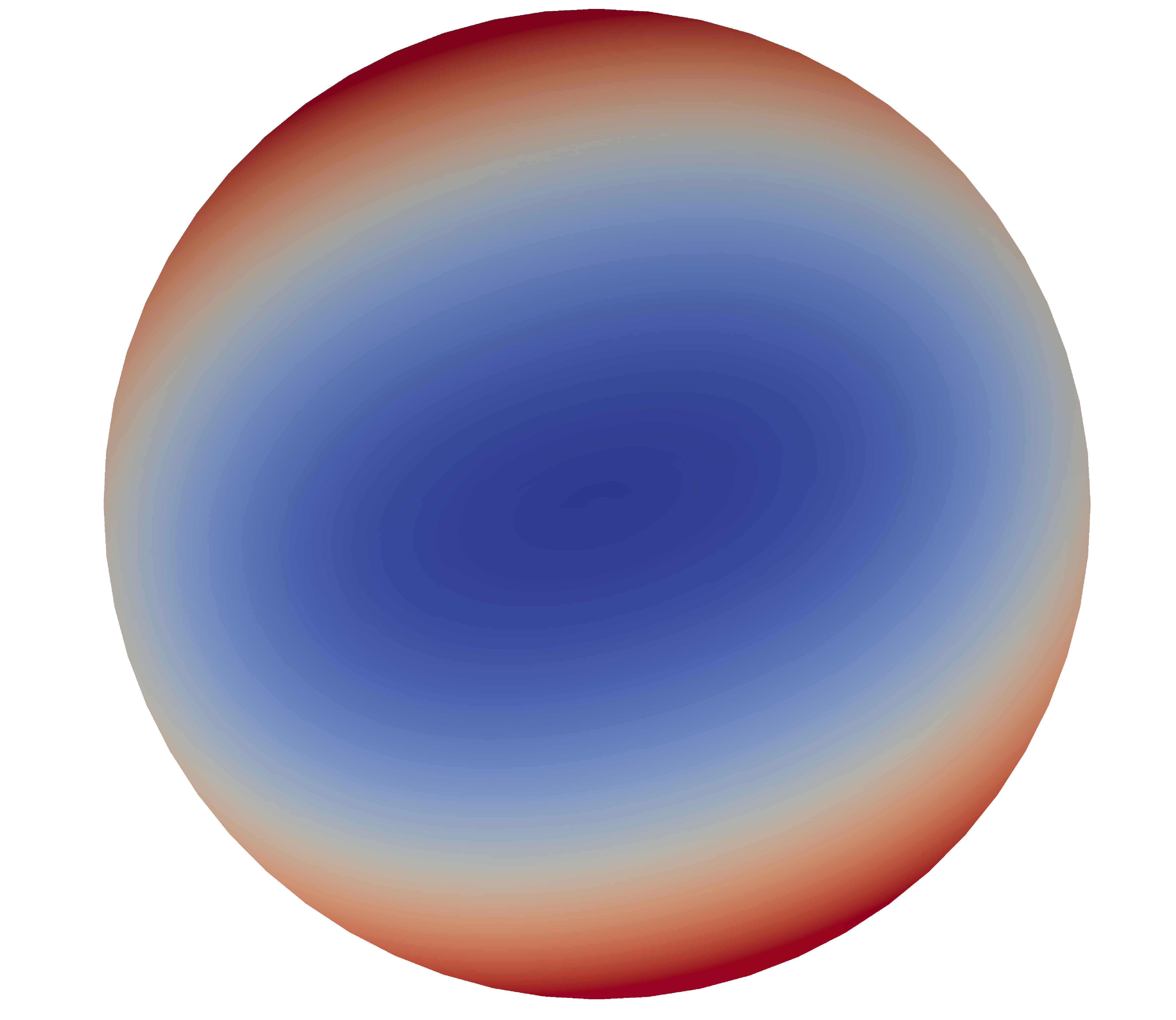}}
    &
    \resizebox{0.25\columnwidth}{!}{\includegraphics{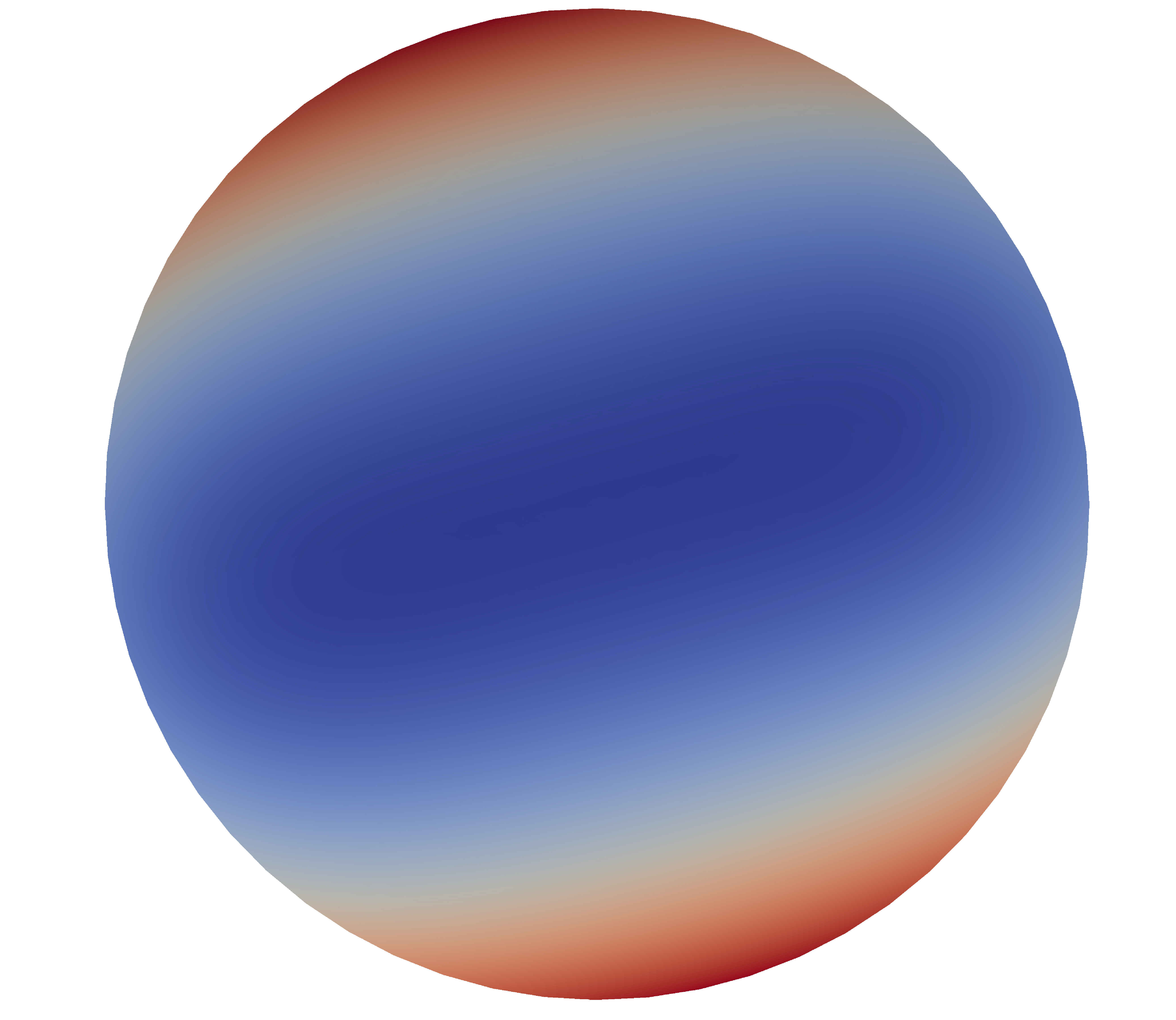}}
  \end{tabular}
  \caption{\label{fig:minimisers-zero}Final configurations after gradient
  descent starting with a flat disk viewed from the top. From left to right,
  top to bottom: $\theta = 1, 81, 91$ and $150$. Color represents the
  magnitude of the displacements $| w |$, from blue at its minimum to red at
  the maximum.}
\end{figure}

We further highlight the behaviour of the solution as a function of $\theta$
in Figure \ref{fig:theta-strains-zero}. In the first plot we compute the mean
bending strains
\[ \frac{1}{| \omega |}  \int_{\omega} (\nabla^2 v)_{i \nocomma i} \mathd x
   \text{\quad with } i \in \{ 1, 2 \} . \]
As mentioned, these act as an easy to compute proxy for the (mean) principal
curvatures. We observe how as $\theta$ increases both strains decrease almost
by an equal amount as the body gradually opens up and flattens out, while
retaining its radial symmetry. However, around $\theta \approx 86$ a stark
change takes place and one of the principal strains decreases while the other
increases. This reflects the abrupt change of the minimiser to a cylindrical
shape. We observe the same phenomenon with the quotient of the principal
axes of the deformed disk in the right plot of the same Figure.

\begin{figure}[h]
  \centering
  \includegraphics[width=6cm]{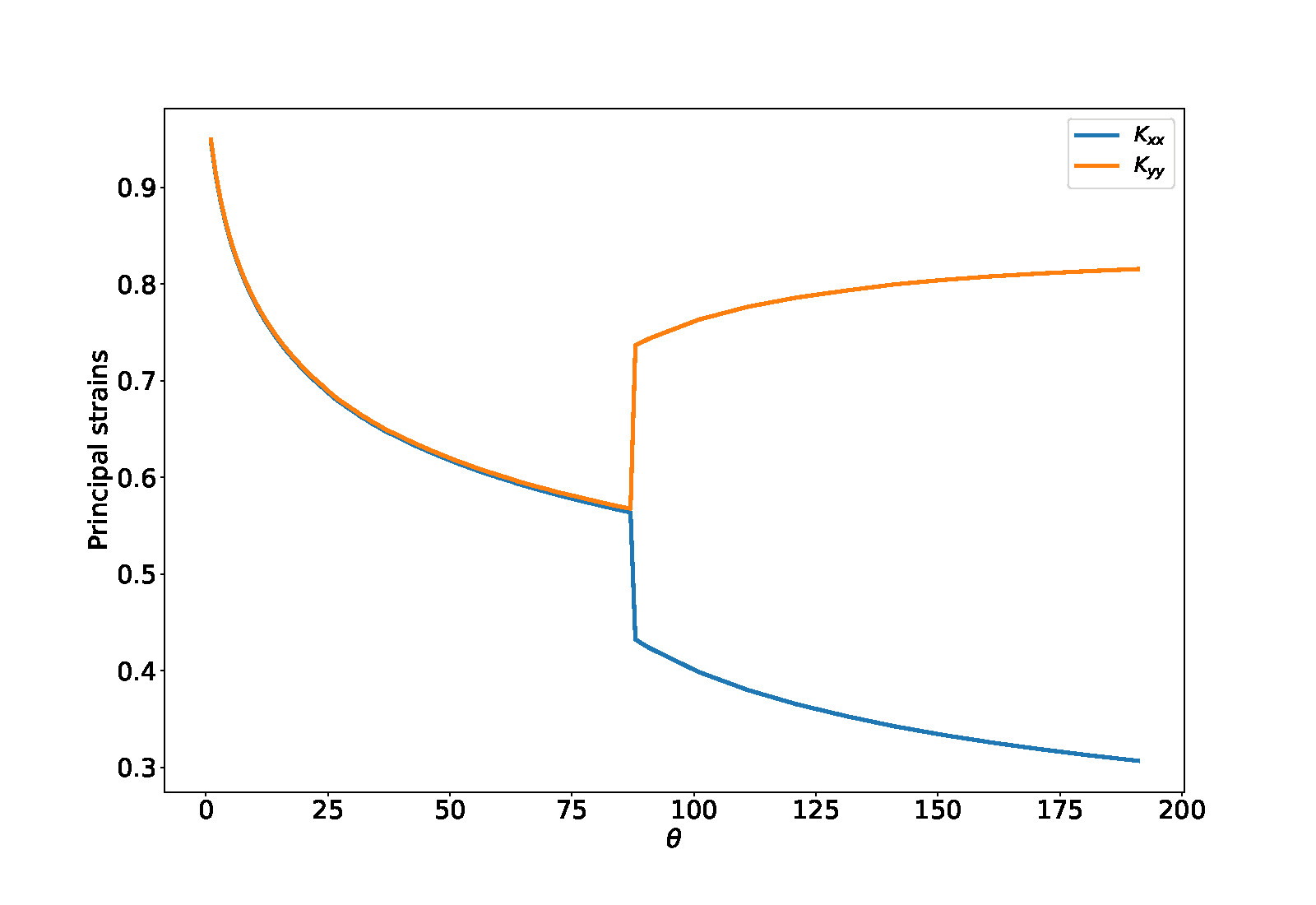}
  \includegraphics[width=6cm]{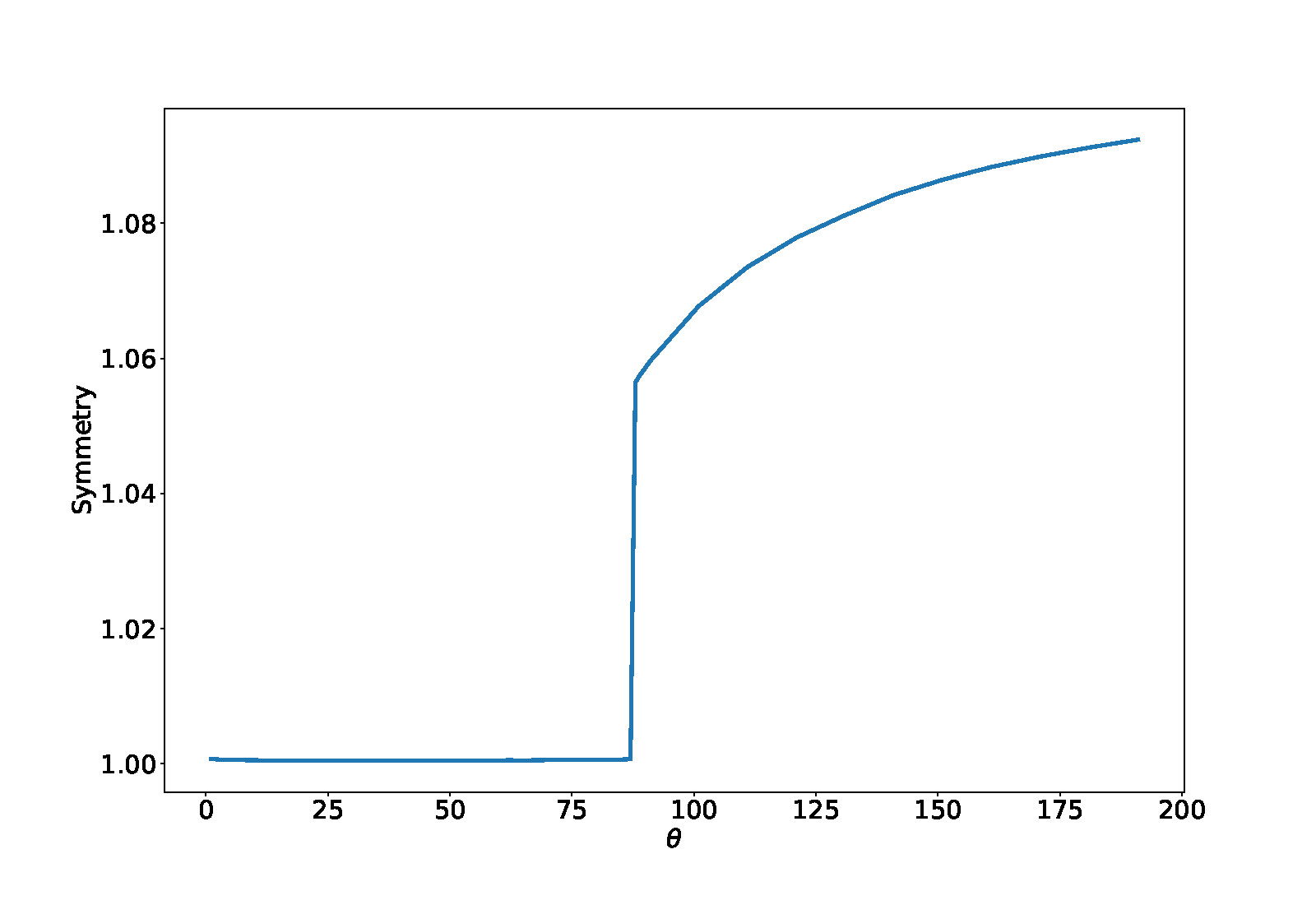}
  \caption{\label{fig:theta-strains-zero}Mean principal strains (left) and symmetry (right) of the
  minimiser as a function of $\theta$ for the flat disk.}
\end{figure}

The second initial condition tested is an orthotropically skewed paraboloid.
Basically, a spherical cap is pressed from the sides to obtain a ``potato
chip''. Testing this shape will highlight the effect of the initial configuration on
the final curvature. We examine its strains and symmetry in Figure

\ref{fig:theta-strains-ani-parab}.

\begin{figure}[h]
  \centering
  \begin{tabular}{ccc}
    \multirow{2}{*}{\resizebox{0.4\columnwidth}{!}{\includegraphics{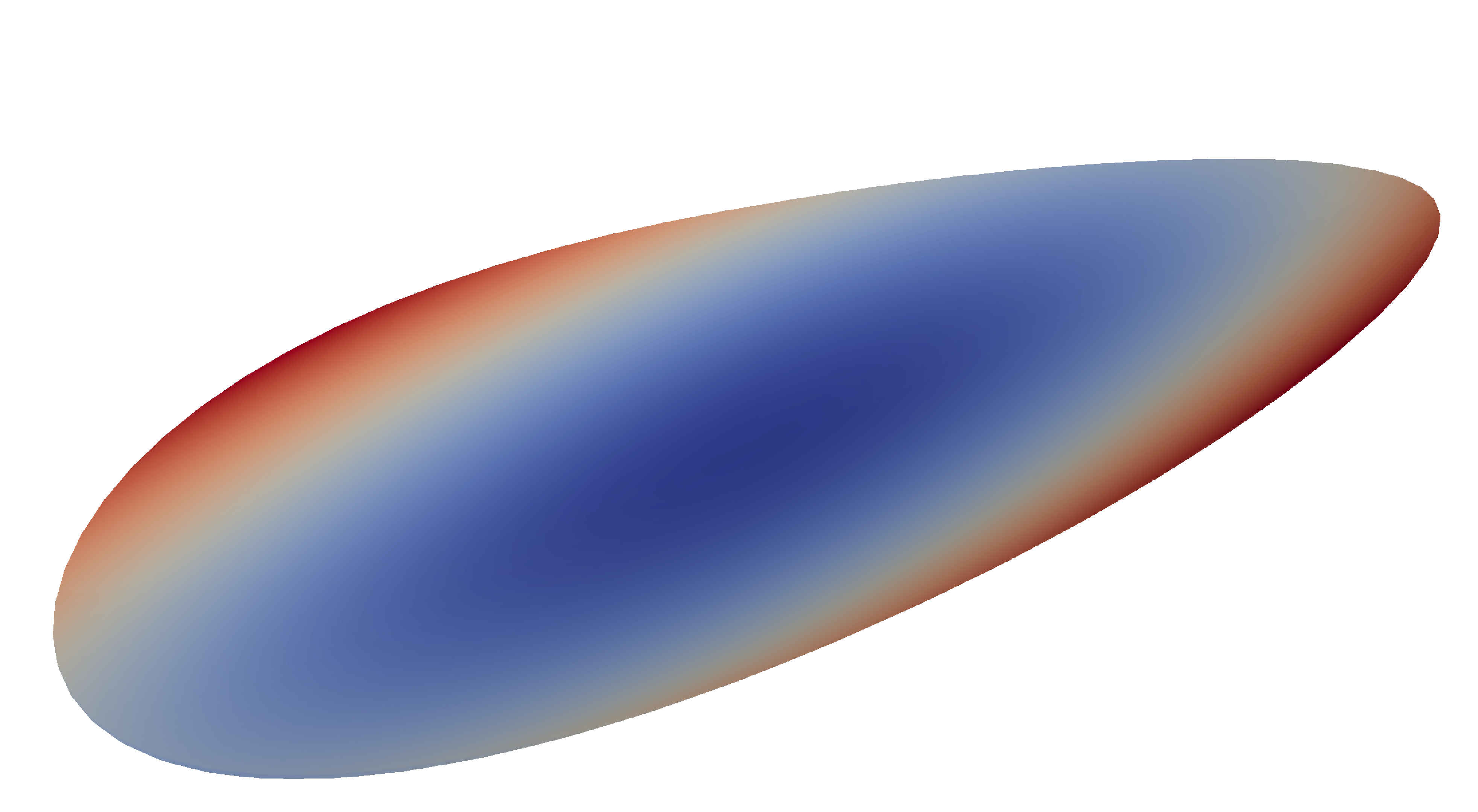}}}
    &
    \resizebox{0.25\columnwidth}{!}{\includegraphics{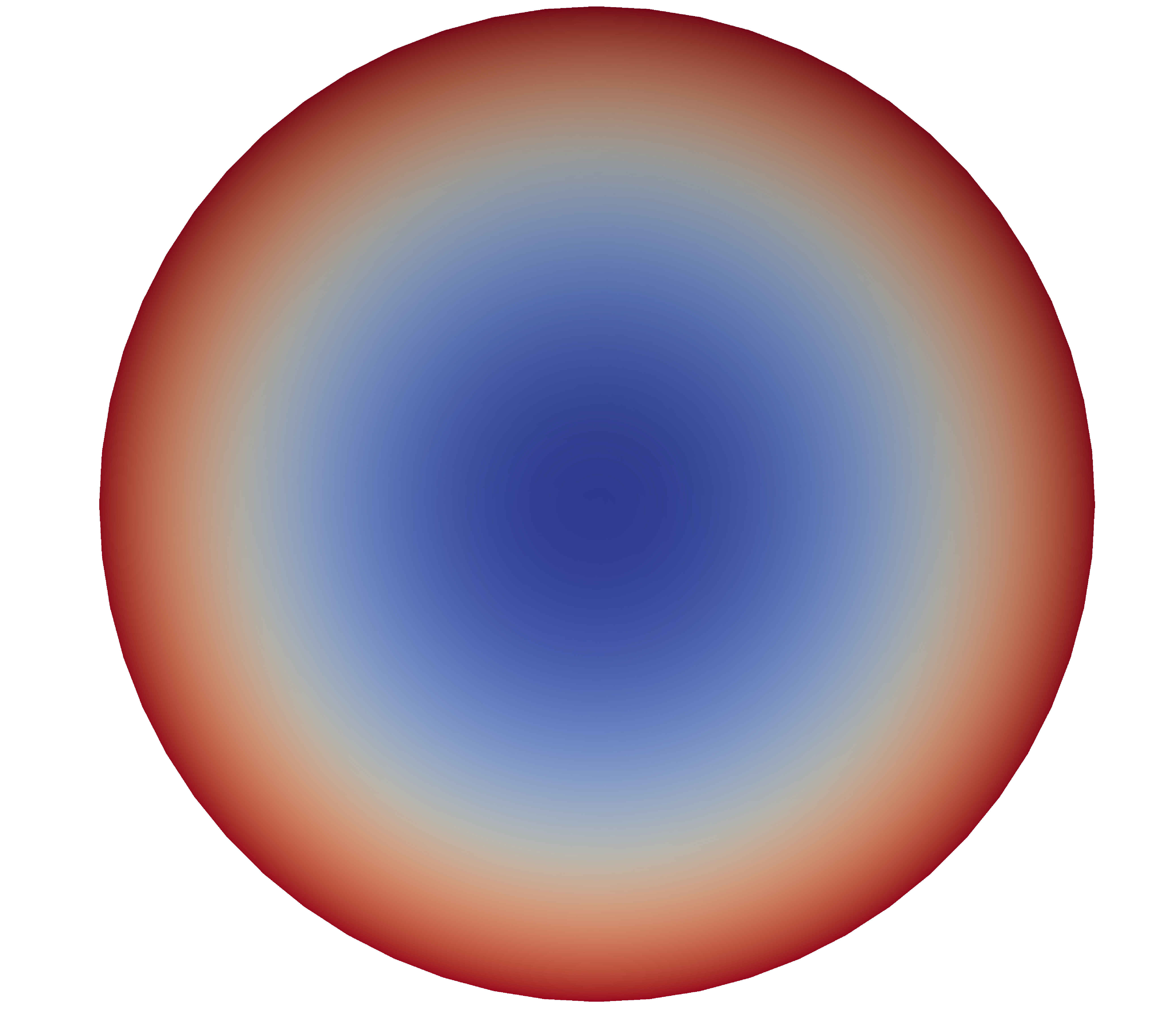}}
    &
    \resizebox{0.25\columnwidth}{!}{\includegraphics{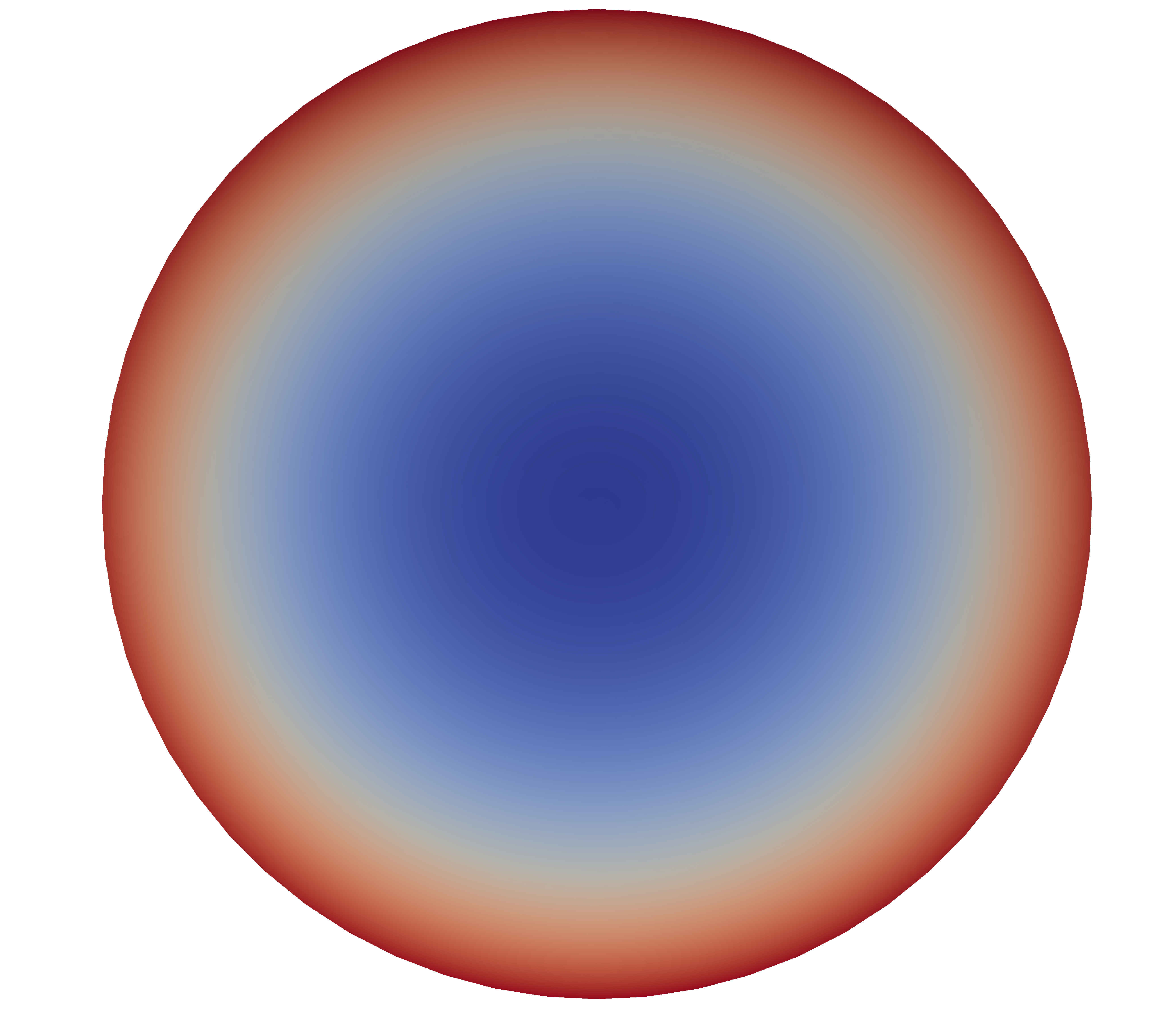}}\\
    &
    \resizebox{0.25\columnwidth}{!}{\includegraphics{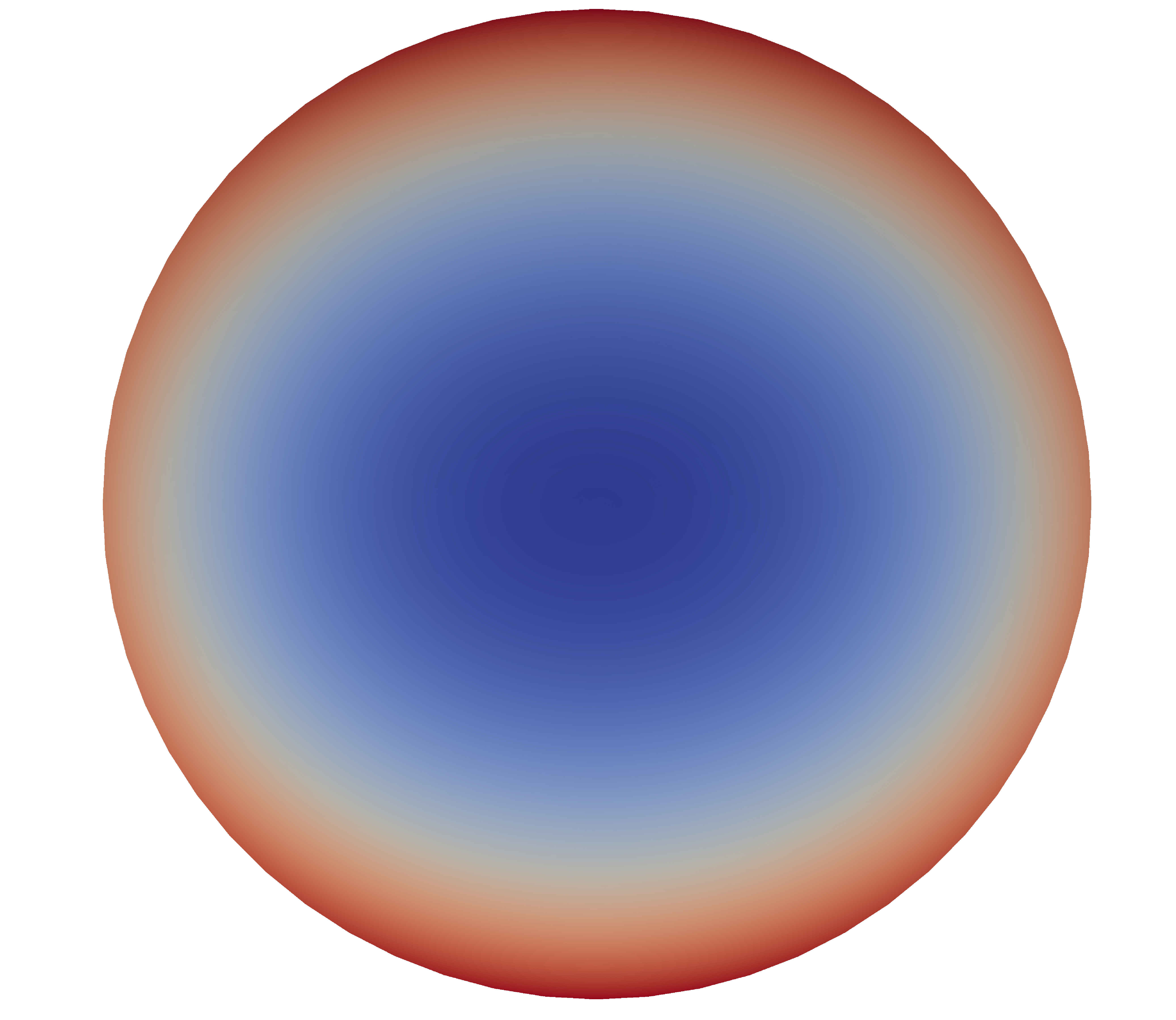}}
    &
    \resizebox{0.25\columnwidth}{!}{\includegraphics{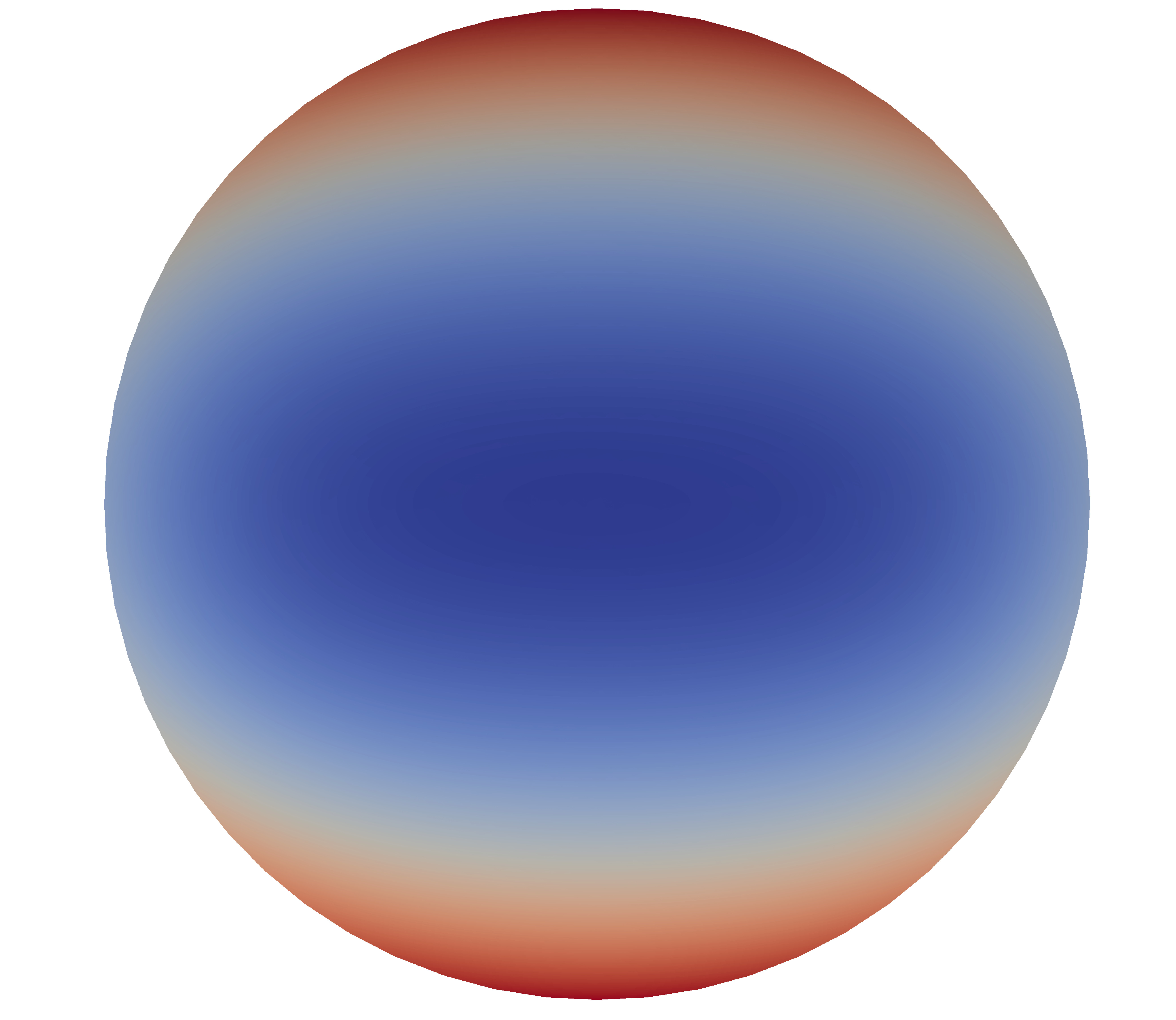}}
  \end{tabular}
  \caption{Initial (left) and final (right) states starting with skewed paraboloid. From left to right,
  top to bottom, $\theta = 1, 51, 61$ and $91$.}
\end{figure}

Again there is a critical value of $\theta \approx 50$ around which the shape
of the minimiser drastically changes. Note however how the change is now
gradual and we see intermediate shapes.

\

\begin{figure}[h]
  \centering
  \includegraphics[width=6cm]{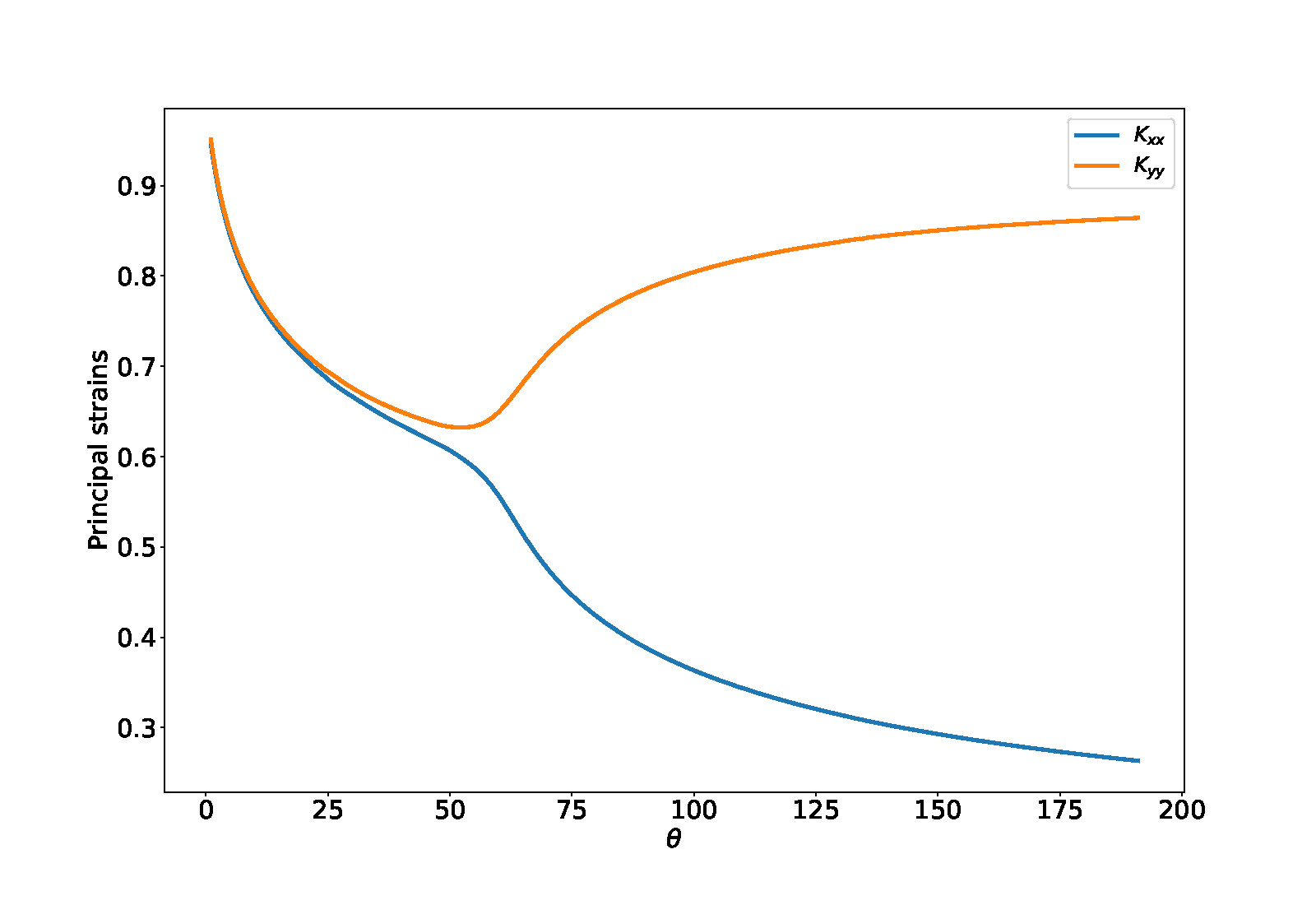}
  \includegraphics[width=6cm]{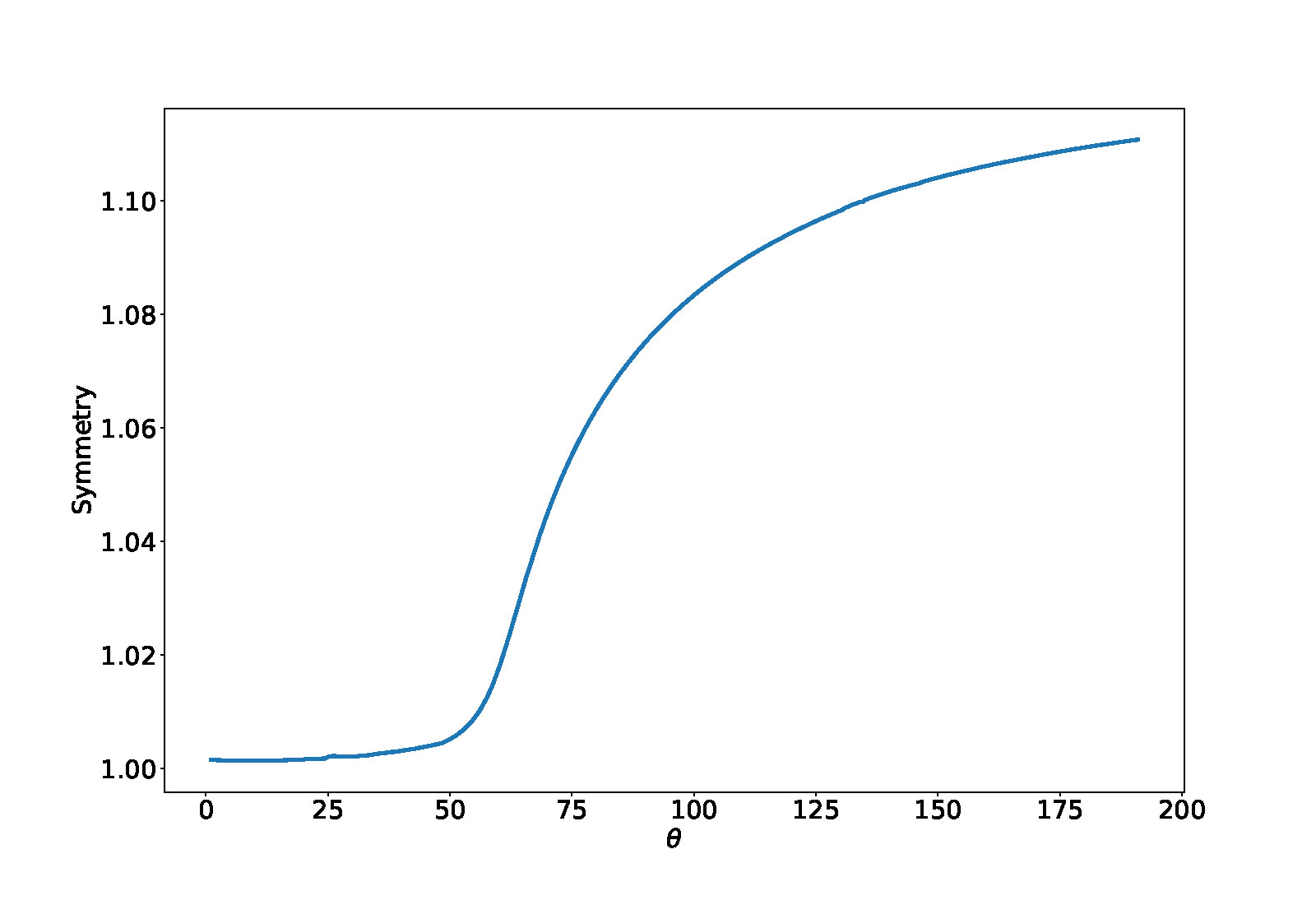}
  \caption{\label{fig:theta-strains-ani-parab}Mean principal strains (left) and symmetry (right) of the
  minimiser as a function of $\theta$ for the skewed paraboloid.}
\end{figure}


\FloatBarrier

\section*{Acknowledgements}

This work was financially supported by project 285722765 of the Deutsche Forschungsgemeinschaft (DFG, German Research Foundation), ``{\tmem{Effektive Theorien und Energie minimierende Konfigurationen für
heterogene Schichten}}''.


\bibliography{hierarchy} 

\bibliographystyle{abbrv}

\end{document}